\theoremstyle{plain}
\newtheorem{thm}{Theorem}[section]
\newtheorem{prop}[thm]{Proposition}
\theoremstyle{definition}
\newtheorem{remark}[thm]{Remark}
\theoremstyle{example}
\theoremstyle{remark}
\numberwithin{equation}{section}
\providecommand{\keywords}[1]{\textbf{\textit{Key words---}} #1}
\def\CC{\mathbb{C}}
\def\ZZ{\mathbb{Z}}
\def\ev{\mathrm{ev}}
\newcommand{\TikZ}[1]{
\begin{matrix}\begin{tikzpicture}#1\end{tikzpicture}\end{matrix}
}
\newcounter{r}
\newcounter{s}
\newcommand\Part[1]{
        \setcounter{r}{1}
	 \foreach \x in {#1}{
 	{\ifnum\value{r}=1
		\draw (0,\value{r}-1)--(\x,\value{r}-1); 
		\fi}
	\draw (0,\value{r}) to (\x,\value{r});
   	\foreach \y in {0, ..., \x} {\draw (\y,\value{r})--(\y,\value{r}-1);}
	\addtocounter{r}{1}
 }}
\newcommand\Tableau[1]{
        \foreach \x [count = \c from 1] in {#1} {
		\foreach \y [count = \d from 1] in \x{
			\node at (\d-.5,\c-.5) {\scriptsize$\y$}; 
			\draw (\d,\c) to (\d,\c-1);
			{\ifnum\d=1
				\draw (0,\c) to (0,\c-1);
				\fi}
			\setcounter{r}{\d}
		}
		{\ifnum\c=1
			\draw (0,0)--(\value{r},0);
			\fi}
		\draw(0,\c) to (\value{r},\c);
		\setcounter{s}{\c}}}
\newcommand\sTableau[1]{
        \foreach \x [count = \c from 1] in {#1} {
		\foreach \y [count = \d from 1] in \x{
			\node at (\d-.5,\c-.5) {\tiny$\y$}; 
			\draw (\d,\c) to (\d,\c-1);
			{\ifnum\d=1
				\draw (0,\c) to (0,\c-1);
				\fi}
			\setcounter{r}{\d}
		}
		{\ifnum\c=1
			\draw (0,0)--(\value{r},0);
			\fi}
		\draw(0,\c) to (\value{r},\c);
		\setcounter{s}{\c}}}
\newcommand{\PartB}[1]{
 \foreach \x [count=\s from 1] in {#1}{
 	{\ifnum\s=1
		\draw (0,\s-1)--(\x,\s-1); 
		\fi}
   \draw (0,\s) to (\x,\s);
   \foreach \y in {0, ..., \x} {\draw (\y,\s)--(\y,\s-1);}
 }}
\tikzstyle{V}=[draw, fill =black, circle, inner sep=0pt, minimum size=1.5pt]
\tikzstyle{wV}=[draw, fill =white, circle, inner sep=0pt, minimum size=4.5pt]
\tikzstyle{bV}=[draw, fill =black, circle, inner sep=0pt, minimum size=4.5pt]
\tikzstyle{over}=[draw=white,double=black,line width=2pt, double distance=.5pt]
\def\Over[#1,#2][#3,#4]{ 
	\draw[style=over]   (#2,#1) .. controls ++(#4*.5-#2*.5,0) and ++(-#4*.5+#2*.5,0) .. (#4,#3);}
\def\Under[#1,#2][#3,#4]{ 
	\draw  (#2,#1) .. controls ++(#4*.5-#2*.5,0) and ++(-#4*.5+#2*.5,0) .. (#4,#3);}
\def\Cross[#1,#2][#3,#4]{
	\Under[#3,#2][#1,#4]\Over[#1,#2][#3,#4]}
\def\Tops[#1][#2][#3]{
	\foreach\x in {#1}{
		\draw (#2,\x+.15) -- (#2+.1, \x+.15) (#2, \x-.15) -- (#2+.1, \x-.15) ;
		\draw (#2+.1,\x) arc (0:360:.75mm and 1.5mm);}
	\foreach \x in {1,...,#3} {\draw (#2,\x)  to (#2+.05,\x); \node[V] at (#2+.05,\x){};}
	}
\def\Bottoms[#1][#2][#3]{
	\foreach\x in {#1}{
		\draw (#2, \x+.15) -- (#2-.1, \x+.15) (#2, \x-.15) -- (#2-.1, \x-.15) ;
		\draw (#2-.1, \x+.15) arc (90:270:.75mm and 1.5mm);}
	\foreach \x in {1,...,#3} {\draw (#2, \x)  to (#2-.05, \x); \node[V] at (#2-.05, \x){};}
	}
\def\Caps[#1][#2,#3][#4]{
	\Tops[#1][#3][#4]
	\Bottoms[#1][#2][#4]
	}
\def\Pole[#1][#2,#3]{
	\shade[left color=white,right color=white] (#2,#1+.15) rectangle (#3,#1-.15);
	\draw[over] (#2,#1+.15) to (#3,#1+.15) (#2,#1-.15) to (#3,#1-.15) ;}
\def\Label[#1,#2][#3][#4]{
	\node[right] at (#2+.1,#3) {#4};
	\node[left] at (#1-.1,#3) {#4};		}
\def\Nodes[#1][#2]{
	 \foreach \x in {1,...,#2} {\node[V] at (#1,\x){};	}
	}
\def\PoleCaps[#1][#2,#3]{
	\foreach\x in {#1}{
		\draw (#2,\x+.15) -- (#2-.1,\x+.15) (#2,\x-.15) -- (#2-.1,\x-.15) ;
		\draw (#2-.1,\x+.15) arc (0:-180:1.5mm and .75mm);}
	\foreach\x in {#1}{
		\draw (#3,\x+.15) -- (#3+.1,\x+.15) (#3,\x-.15) -- (#3+.1,\x-.15) ;
		\draw (#3+.1,\x+.15) arc (0:360:1.5mm and .75mm);}
	}
\def\PoleTwist[#1,#2]{
	\foreach \x/\y in {-1/1L, -.7/1R, 0/2L, .3/2R}{\coordinate(T\y) at (#2,\x); \coordinate(B\y) at (#1,\x);}
	\draw[thin] (B1R) .. controls ++(#2*.5-#1*.5-.1,0) and ++(-#2*.5+#1*.5-.1,0) ..  (T2R)
			(B1L)   .. controls ++(#2*.5-#1*.5+.1,0) and ++(-#2*.5+#1*.5+.1,0) ..    (T2L) ;
	\draw[line width=2pt, white]
			(#1,.15)  .. controls +(#2*.5-#1*.5,0) and +(-#2*.5+#1*.5,0) ..   (#2,-.85) ;
	\draw[thin,over] 
		(B2R) .. controls ++(#2*.5-#1*.5+.1,0) and ++(-#2*.5+#1*.5+.1,0) ..  (T1R) 
			(B2L)  .. controls +(#2*.5-#1*.5-.1,0) and +(-#2*.5+#1*.5-.1,0) ..   (T1L) ;
			}
\def\SymPolesCaps[#1,#2][#3]{
	\draw (#1,.3) -- (#1-.1,.3) (#1,.15) -- (#1-.1, .15) ;
	\draw (#1-.1, .3) arc (0:-180:2pt and 1.5pt);
	\draw (#1,#3+.7) -- (#1-.1,#3+.7) (#1,#3+.85) -- (#1-.1,#3+.85) ;
	\draw (#1-.1,#3+.85)  arc (0:-180:2pt and 1.5pt);
	\draw (#2,.3) -- (#2+.1, .3) (#2, .15) -- (#2+.1, .15) ;
	\draw (#2+.1, .3) arc (0:360:2pt and 1.5pt);
	\draw (#2, #3+.7) -- (#2+.1, #3+.7) (#2, #3+.85) -- (#2+.1, #3+.85) ;
	\draw (#2+.1, #3+.85) arc (0:360:2pt and 1.5pt);}
\newcommand{\posleq}[1]{
	\hspace{0.1cm}
	\begin{tikzpicture}
	\draw (-0.8ex, -0.5ex) -- (0.8ex, -0.5ex);
	\draw (-0.8ex, 0.4ex) -- (0.7ex, -0.2ex);
	\draw (-0.8ex, 0.4ex) -- (0.7ex, 1ex);
	\draw (0.4ex,0.4ex) --(1.1ex, 0.4ex);
	\draw (0.75ex,0.75ex) --(0.75ex, 0.05ex);
	\end{tikzpicture}
	\hspace{0.1cm}
	}
\newcommand{\negleq}[1]{
	\hspace{0.1cm}
	\begin{tikzpicture}
	\draw (-0.8ex, -0.5ex) -- (0.8ex, -0.5ex);
	\draw (-0.8ex, 0.4ex) -- (0.7ex, -0.2ex);
	\draw (-0.8ex, 0.4ex) -- (0.7ex, 1ex);
	\draw (0.4ex,0.4ex) --(1.1ex, 0.4ex);
	\end{tikzpicture}
	\hspace{0.1cm}
	}
\newcommand{\zeroleq}[1]{
	\hspace{0.1cm}
	\begin{tikzpicture}
	\draw (-0.8ex, -0.5ex) -- (0.8ex, -0.5ex);
	\draw (-0.8ex, 0.4ex) -- (0.7ex, -0.2ex);
	\draw (-0.8ex, 0.4ex) -- (0.7ex, 1ex);
	\draw  (0.75ex,0.4ex) ellipse (0.2ex and 0.35ex);
	\end{tikzpicture}
	\hspace{0.1cm}
	}
\newcommand{\posgeq}[1]{
	\hspace{0.1cm}
	\begin{tikzpicture}
	\draw (-0.8ex, -0.5ex) -- (0.8ex, -0.5ex);
	\draw (0.8ex, 0.4ex) -- (-0.7ex, -0.2ex);
	\draw (0.8ex, 0.4ex) -- (-0.7ex, 1ex);
	\draw (-0.4ex,0.4ex) --(-1.1ex, 0.4ex);
	\draw (-0.75ex,0.75ex) --(-0.75ex, 0.05ex);
	\end{tikzpicture}
	\hspace{0.1cm}
	}
\newcommand{\neggeq}[1]{
	\hspace{0.1cm}
	\begin{tikzpicture}
	\draw (-0.8ex, -0.5ex) -- (0.8ex, -0.5ex);
	\draw (0.8ex, 0.4ex) -- (-0.7ex, -0.2ex);
	\draw (0.8ex, 0.4ex) -- (-0.7ex, 1ex);
	\draw (-0.4ex,0.4ex) --(-1.1ex, 0.4ex);
	\end{tikzpicture}
	\hspace{0.1cm}
	}
\newcommand{\zerogeq}[1]{
	\hspace{0.1cm}
	\begin{tikzpicture}
	\draw (-0.8ex, -0.5ex) -- (0.8ex, -0.5ex);
	\draw (0.8ex, 0.4ex) -- (-0.7ex, -0.2ex);
	\draw (0.8ex, 0.4ex) -- (-0.7ex, 1ex);
	\draw  (-0.75ex,0.4ex) ellipse (0.2ex and 0.35ex);
	\end{tikzpicture}
	\hspace{0.1cm}
	}
\newcommand{\posl}[1]{
	\hspace{0.1cm}
	\begin{tikzpicture}
	\draw (-0.8ex, 0.4ex) -- (0.7ex, -0.2ex);
	\draw (-0.8ex, 0.4ex) -- (0.7ex, 1ex);
	\draw (0.4ex,0.4ex) --(1.1ex, 0.4ex);
	\draw (0.75ex,0.75ex) --(0.75ex, 0.05ex);
	\end{tikzpicture}
	\hspace{0.1cm}
	}
\newcommand{\negl}[1]{
	\hspace{0.1cm}
	\begin{tikzpicture}
	\draw (-0.8ex, 0.4ex) -- (0.7ex, -0.2ex);
	\draw (-0.8ex, 0.4ex) -- (0.7ex, 1ex);
	\draw (0.4ex,0.4ex) --(1.1ex, 0.4ex);
	\end{tikzpicture}
	\hspace{0.1cm}
	}
\newcommand{\zerol}[1]{
	\hspace{0.1cm}
	\begin{tikzpicture}
	\draw (-0.8ex, 0.4ex) -- (0.7ex, -0.2ex);
	\draw (-0.8ex, 0.4ex) -- (0.7ex, 1ex);
	\draw  (0.75ex,0.4ex) ellipse (0.2ex and 0.35ex);
	\end{tikzpicture}
	\hspace{0.1cm}
	}
\newcommand{\posg}[1]{
	\hspace{0.1cm}
	\begin{tikzpicture}
	\draw (0.8ex, 0.4ex) -- (-0.7ex, 1ex);
	\draw (0.8ex, 0.4ex) -- (-0.7ex, -0.2ex);
	\draw (-0.4ex,0.4ex) --(-1.1ex, 0.4ex);
	\draw (-0.75ex,0.75ex) --(-0.75ex, 0.05ex);
	\end{tikzpicture}
	\hspace{0.1cm}
	}
\newcommand{\negg}[1]{
	\hspace{0.1cm}
	\begin{tikzpicture}
	\draw (0.8ex, 0.4ex) -- (-0.7ex, -0.2ex);
	\draw (0.8ex, 0.4ex) -- (-0.7ex, 1ex);
	\draw (-0.4ex,0.4ex) --(-1.1ex, 0.4ex);
	\end{tikzpicture}
	\hspace{0.1cm}
	}
\newcommand{\zerog}[1]{
	\hspace{0.1cm}
	\begin{tikzpicture}
	\draw (0.8ex, 0.4ex) -- (-0.7ex, -0.2ex);
	\draw (0.8ex, 0.4ex) -- (-0.7ex, 1ex);
	\draw  (-0.75ex,0.4ex) ellipse (0.2ex and 0.35ex);
	\end{tikzpicture}
	\hspace{0.1cm}
	}
\renewcommand{\@makefnmark}{\mbox{\textsuperscript{}}}
\title{Clebsch-Gordan coefficients for Macdonald polynomials}
\author{
Aritra Bhattacharya\quad\ email:\ baritra@imsc.res.in \\
Arun Ram\quad\ \ email:\ aram@unimelb.edu.au \\
\\
}
\date{\today}
\begin{document}

\maketitle



\begin{abstract}
\noindent
In this paper we use the double affine Hecke algebra to compute the Macdonald polynomial
products $E_\ell P_m$ and $P_\ell P_m$ for type $SL_2$ and type $GL_2$ Macdonald polynomials.
Our method follows the ideas of Martha Yip but executes a compression to reduce the sum from 
$2\cdot 3^{\ell-1}$ signed terms to $2\ell$ positive terms.  We show that our rule for $P_\ell P_m$ is equivalent 
to a special case of the Pieri rule of Macdonald.  
Our method shows that computing $E_\ell\mathbf{1}_0$
and $\mathbf{1}_0 E_\ell \mathbf{1}_0$ in terms of a special basis of the double affine Hecke algebra
provides universal compressed formulas for multiplication by $E_\ell$ and $P_\ell$.
The formulas for a specific products $E_\ell P_m$ and $P_\ell P_m$
are obtained by evaluating the universal formulas at $t^{-\frac12}q^{-\frac{m}{2}}$.
\end{abstract}

\keywords{Macdonald polynomials, symmetric functions, Hecke algebras}
\footnote{AMS Subject Classifications: Primary 05E05; Secondary  33D52.}


\section{Introduction}

The type $SL_2$ Macdonald polynomals $P_\ell(x)$ are special cases of the Askey-Wilson polynomlals,
sometimes called the $q$-ultraspherical polynomials (see \cite[p.156-7]{Mac03}).  
The $P_\ell(x)$ are two-parameter $q$-$t$-generalizations of the characters of finite dimensional
representations of $SU_2$; i.e. the characters of $SU_2$ which play a pivotal role in the 
``standard model'' in particle physics and in the analysis of Heisenberg spin chains in mathematical physics.  
The polynomial representation of the double affine Hecke 
algebra, which is the source of the type $SL_2$ Macdonald polynomials, is a generalization of the 
``Dirac sea'', a representation of the Heisenberg algebra which controls the mathematics behind the
quantum harmonic oscillator (see \eqref{etaonpolypic} and Proposition \ref{etaonEnew} and compare to the discussion, 
for example,  in the neighborhood of Figure 10.2 in \cite{SF14}).

As in \cite[\S 6.1 and 6.3]{Mac03}, we denote the electronic (nonsymmetric) Macdonald polynomials for type $SL_2$ 
by $E_m$, for $m\in \ZZ$, and
the bosonic (symmetric) Macdonald polynomials for type $SL_2$ are denoted $P_m$, for $m\in \ZZ_{\ge 0}$
(see \cite[\S1]{CR22} for the motivation for the terminology `electronic' and `bosonic').
The type $SL_2$ Macdonald polynomials are, by a coordinate transformation, ``equivalent'' to the type 
$GL_2$ Macdonald polynomials.  We review this coordinate transformation in Section 2 and explain how 
a product rule for type $SL_2$ Macdonald polynomials translates to a product rule for type $GL_2$ 
Macdonald polynomials.

Following the development of \cite[\S3.4]{CR22}, we use 
the calculus of the bosonic symmetrizer $\mathbf{1}_0$ and the
normalized intertwining operators $\eta_{s_1}$, $\eta_\pi$ 
to compute the elements $E_\ell(X) \mathbf{1}_0$ and $\mathbf{1}_0E_\ell(X)\mathbf{1}_0$
in terms of a special basis of the (localized) double affine Hecke algebra.
Continuing the main conceptual idea of \cite{HR22} the  expansions of the elements
 $E_\ell(X) \mathbf{1}_0$ and $\mathbf{1}_0E_\ell(X)\mathbf{1}_0$
 function as universal formulas for multiplying Macdonald 
polynomials, since they contain enough information to compute arbitrary products
$E_\ell P_m$ and $P_\ell P_m$.  We review this calculus, in our $SL_2$ setting, in section 3.
The use of this calculus enables to cast the product framework from \cite{Yi10} in a form which is
tractable for executing the compression from $2\cdot 3^{\ell-1}$ terms to $2\ell$ terms.

The key computation for the proof of the product rules is done in sections 4 and 5.  In section 4 we use the
basic structural calculus reviewed in section 3 to
compute recursions satsfied by the coefficients of the operators $E_\ell(X)\mathbf{1}_0$ and $\mathbf{1}_0E_\ell(X)\mathbf{1}_0$
when expanded in terms of the $\{ \eta^\ell\mathbf{1}_0\ |\ \ell\in \ZZ\}$ basis of the completed $\mathbf{1}_0$-projected 
double affine Hecke algebra.
In section 5 we solve these recursions to provide product expressions for the coefficients similar to product expressions for binomial coefficients.

Yip \cite[Theorem 4.2 and Theorem 4.4]{Yi10} gives alcove walk expansions of the 
products $E_\ell P_m$ and $P_\ell P_m$.  The illustrative \cite[Example 5.1]{Yi10}
computes the alcove walk expansion of the product $E_3 P_m$ for the $SL_2$ case.  
In this example there are 18 alcove walks which, after simpification, produce 6 terms.  
In general, for the product $E_\ell P_m$ for type $SL_2$, the alcove walk expansion of Yip will be a sum over
$2\cdot 3^{\ell-1}$ alcove walks which simplifies to $2\ell$ terms.

The result of Theorem \ref{finalthm} of this paper provides an explicit closed formula for each of the $2\ell$ terms which
appear in the expansion of $E_\ell P_m$.
To our knowledge, this formula for $E_\ell P_m$ is new, 
particularly the execution of the desired compression after executing
the general double affine Hecke algebra method of deriving product rules given by Yip \cite{Yi10}.
The $q$-$t$-binomial coefficients are given by
\begin{equation}
\genfrac[]{0pt}{0}{\ell}{j}_{q,t} 
= \frac{ \frac{(q;q)_\ell}{(t;q)_\ell} } { \frac{(q;q)_j}{(t;q)_j} \frac{(q;q)_{\ell-j}}{(t;q)_{\ell-j}} },
\qquad\hbox{where}\qquad
(a;q)_j = (1-a)(1-qa)(1-q^2a)\cdots (1-q^{j-1}a).
\label{introqtbin}
\end{equation}
Then Theorem \ref{finalthm} proves that, for $\ell, m\in \ZZ_{>0}$,
\begin{align*}
P_\ell P_m &= \sum_{j=0}^\ell c_j^{(\ell)}(q^m)\, P_{m+\ell-2j},
\\
E_\ell P_m &= \sum_{j=0}^{\ell-1} a_j^{(\ell)}(q^m) E_{m+\ell-2j}+b_j^{(\ell)}(q^m) E_{-m+\ell-2j},
\qquad\hbox{and}
\\
E_{-\ell} P_m &= \sum_{j=0}^{\ell} t\cdot b_j^{(\ell+1)}(q^m) E_{m-(\ell-2j)}+a_j^{(\ell+1)}(q^m) E_{-m-(\ell-2j)},
\end{align*}
where
\begin{align}
c_j^{(\ell)}(q^m) 
&= 
\genfrac[]{0pt}{0}{\ell}{j}_{q,t} 
\frac{(q^mq^{-(j-1)};q)_j}{(tq^mq^{-j};q)_j}
\frac{(t^2q^m q^{\ell-2j};q)_j}{(tq^mq^{\ell-2j+1};q)_j},
\label{cdefn}
\\
a_j^{(\ell)}(q^m) 
&= c_j^{(\ell)}(q^m)\cdot \frac{(1-q^{\ell-j})}{(1-q^\ell)}\cdot \frac{(1-t q^m q^{\ell-j})}{(1-t q^m q^{\ell-2j})}
\qquad\hbox{and}
\nonumber
\\
b_j^{(\ell)}(q^m) 
&= c_{\ell-j}^{(\ell)}(q^m)\cdot q^{j}\cdot \frac{(1-q^{\ell-j})}{(1-q^\ell)}\cdot \frac{(1-t q^m q^{-(\ell-j)})}{(1-t^2 q^m q^{-(\ell-2j)}) }
\nonumber
\end{align}
The rule for the product $P_\ell P_m$ is equivalent to a special case of the Pieri formula given in 
\cite[Ch.\ VI (6.24)]{Mac}.  The precise connection is derived in Proposition \ref{convtoMac}, reproducing work of Soojin Cho
\cite{Cho19}.  Indeed, the rule for the product $P_\ell P_m$ is the ``linearization formula'' for $q$-ultraspherical polynomials and 
appears in \cite[Theorem 13.3.2]{Is05}, where it is stated that it is an identity of Rogers from 1894.

We have taken some care to try to make our exposition so that it contains all necessary definitions
and complete and thorough proofs of the results.  Our goal, in hope that the powerful tools provided
by the double affine Hecke will become broadly accessible and utilized, has been to make this paper so that 
it can be read from scratch with no previous knowledge 
of Macdonald polynomials or the double affine Hecke algebra.  Section 7 provides explicit examples.




\smallskip\noindent
\textbf{Acknowledgements.}  We thank Beau Anasson, Yifan Guo and Haris Rao for conversations and
Mathematica computations which made it possible to establish the product formula for
$D_j^{(\ell)}(Y)$ which appears in \eqref{tildeDdefn}.   Thank you to Soojin Cho, Charles Dunkl, Dennis Stanton, and Ole Warnaar
for helpful references on the orthogonal polynomial literature and to Jean-\'Emile Bourgine, Sasha Garbali and Jasper Stokman for very helpful comments to improve the exposition.
We thank Institute of Mathematical Sciences Chennai 
for support which enabled A.\ Bhattacharya
to visit University of Melbourne in February-March 2023.


\section{Macdonald polynomials for $SL_2$ and for $GL_2$}

In this section we introduce the electronic and bosonic Macdonald polynomials for types $SL_2$ and $GL_2$
and explain the relation between them.  We show how product rules for type $SL_2$ Macdonald polynomials
convert to product rules for type $GL_2$ Macdonald polynomials.  In Section \ref{Maccomp} we check that the Pieri rule 
for multiplying bosonic polynomials given in \cite[Ch.\ VI (6.24)]{Mac} matches with the rule for the product $P_\ell P_m$
stated in the introduction (and proved in Theorem \ref{finalthm}).

For working with Macdonald polynomials, fix $q,t\in \CC^\times$ such that the only pair of integers $(a,b)$ for which
$q^a t^b = 1$ is the pair $(a,b) = (0,0)$.  Alternatively, one may think of $q$ and $t$ as parameters
and to work with polynomials over the coefficient ring $\CC(q,t)$ instead of over the coefficient ring $\CC$.

\subsection{Macdonald polynomials for type $SL_2$}

The electronic Macdonald polynomials for type $SL_2$, 
$$E_\ell(x) \in \CC[x,x^{-1}],\quad\hbox{are indexed by}\quad \ell\in \ZZ,$$
and the bosonic Macdonald polynomials for type $SL_2$,
$$P_\ell(x) \in \CC[x,x^{-1}],\quad\hbox{are indexed by} \quad \ell\in \ZZ_{\ge0}.$$
Let $\ell\in \ZZ_{\ge 0}$.  Using the notation of \eqref{introqtbin}, the electronic Macdonald polynomials are given by
$$ E_{-\ell}(x) =  \sum_{j=0}^\ell \genfrac[]{0pt}{0}{\ell}{j}_{q,t}
\frac{(1-tq^j)}{(1-tq^{\ell})} x^{\ell-2j}
\qquad\hbox{and}\qquad
E_{\ell}(x) = \sum_{j=0}^{\ell-1} \genfrac[]{0pt}{0}{\ell-1}{j}_{q,t}
\frac{q^{\ell-1-j} (1-tq^j)}{(1-tq^{\ell-1})} x^{-\ell+2j+2},
$$
and the bosonic Macdonald polynomials are given by 
$$
P_\ell(x) =  \sum_{j=0}^\ell \genfrac[]{0pt}{0}{\ell}{j}_{q,t} x^{\ell-2j}.
$$
See \S \ref{qtbintoMac} for the connection between these formulas and 
the formulas in \cite[(6.2.7), (6.2.8), (6.3.7)]{Mac03}.

%

\subsection{Macdonald polynomials for type $GL_2$}

The electronic Macdonald polynomials for type $GL_2$,
$$E_{(\mu_1,\mu_2)}(x_1,x_2)\in \CC[x_1,x_1^{-1},x_2,x^{-1}_2],
\quad\hbox{are indexed by $(\mu_1,\mu_2)\in \ZZ^2$,}
$$
and the bosonic Macdonald polynomials for type $GL_2$,
$$P_{(\lambda_1,\lambda_2)}(x_1,x_2)\in \CC[x_1,x_1^{-1},x_2,x^{-1}_2],
\quad\hbox{are indexed by 
$\lambda = (\lambda_1, \lambda_2)$ with $\lambda_1,\lambda_2\in \ZZ$ and $\lambda_1\ge \lambda_2$.}
$$
The $E_{(\mu_1,\mu_2)}(x_1,x_2)$ and $P_{(\lambda_1,\lambda_2)}(x_1,x_2)$ are given,
in terms of the Macdonald polynomials for type $SL_2$, by 
\begin{align}
E_{(\mu_1,\mu_2)}(x_1,x_2) 
&= (x_1^{\frac12}x_2^{\frac12})^{\mu_1+\mu_2} E_{\mu_1-\mu_2}(x_1^{\frac12}x_2^{-\frac12})
\qquad\hbox{and}
\nonumber
\\
P_{(\lambda_1,\lambda_2)}(x_1,x_2) 
&= (x_1^{\frac12}x_2^{\frac12})^{\lambda_1+\lambda_2} P_{\lambda_1-\lambda_2}(x_1^{\frac12}x_2^{-\frac12}).
\label{SL2toGL2}
\end{align}
Equivalently, if $m_1, m_2\in \frac12\ZZ$ then
$(x_1^{\frac12}x_2^{\frac12})^{2m_2}E_{2m_1}(x_1^{\frac12}x_2^{-\frac12}) 
= E_{(m_1+m_2, -m_1+m_2)}(x_1, x_2).$
Another way to express this conversion is to let
\begin{equation}
\hbox{$y=x_1^{\frac12}x_2^{\frac12}$ and 
$x=x_1^{\frac12}x_2^{-\frac12}$}
\qquad\hbox{so that}\qquad
\hbox{$x_1=yx$ and $x_2 = yx^{-1}$}.
\label{varchange}
\end{equation}
Then the Macdonald polynomials for type $SL_2$ are given in terms of the Macdonald polynomials for type $GL_2$ by
\begin{equation}
E_{2m_1}(x) 
= y^{-2m_2} E_{(m_1+m_2, -m_1+m_2)}(yx,yx^{-1})
=E_{(m_1+m_2, -m_1+m_2)}(x,x^{-1}),
\label{GL2toSL2}
\end{equation}
for $m_1, m_2\in \frac12\ZZ$.  The following picture illustrates the conversion between $E_m$ and $E_{(\mu_1,\mu_2)}$ given by 
the formulas \eqref{GL2toSL2} and \eqref{SL2toGL2}.
	
		\begin{tikzpicture}[xscale=1.00, yscale=1.00]
			\foreach \x in {-4,...,4}
			\foreach \y in {-4,...,4}
			{
				\filldraw[gray] (\x,\y) circle (2pt);
			}
			\draw[thick, magenta] (-4.4,-4.4) -- (4.4,4.4);
			\draw[thick, magenta] (-4.4,4.4) -- (4.4,-4.4);
			\foreach \x in {-8,...,8}
			\filldraw [black] (\x/2, -\x/2) circle (2pt); 
			\foreach \x in {-8,...,8}
			\draw (\x/2-0.25,-\x/2-0.25) node{$E_{\x}$};
			\draw[thick, gray] (-4.4,0) -- (4.4,0); 
			\draw[thick, gray] (0,4.4) -- (0,-4.4); 
			\filldraw[blue] (1,1) circle (2pt);
			\draw[blue] (1,1.25) node{$E_{(1,1)}$}; 
			\filldraw[blue] (1,3) circle (2pt);
			\draw[blue] (1,3.25) node{$E_{(1,3)}$}; 
			\filldraw[blue] (-3,2) circle (2pt);
			\draw[blue] (-3.25,1.75) node{$E_{(-3,2)}$}; 
			\filldraw[blue] (-3,0) circle (2pt);
			\draw[blue] (-3,-0.25) node{$E_{(-3,0)}$}; 
			\filldraw[blue] (-1,-2) circle (2pt);
			\draw[blue] (-1,-2.25) node{$E_{(-1,-2)}$}; 
			
			\filldraw[blue] (4,1) circle (2pt);
			\draw[blue] (4,1.25) node{$E_{(\mu_1,\mu_2)}$};
			
%
%
%
\begin{scope}[<->, shorten >=2pt, shorten <=2pt, every node/.style={fill=white,inner sep=2pt}]
\draw (0.1,0.1) to node{\footnotesize$m_1$} (1.6,-1.4);
\draw (1.55,-1.45) to node{\footnotesize$m_2$} (3.95,0.95);
\draw (4,0.95) to node{\footnotesize$\mu_2$} (4,0.05);
\draw (4,0.1) to node{\footnotesize$\mu_1$} (0,0.1);
\end{scope}
			
			\draw (7,3.25) node{$(\mu_1,\mu_2) \in \ZZ^2$,};
			\draw (7,2.75) node{$m_1,m_2 \in \frac12\ZZ$,};
			\draw (7,0.25) node{$\mu_1 = m_1 + m_2,$};
			\draw (7,-0.25) node{$\mu_2 = m_1-m_2,$};
			\draw (7,-2.75) node{$m_1 = \frac12(\mu_1-\mu_2),$};
			\draw (7,-3.25) node{$m_2 = \frac12(\mu_1+\mu_2),$};
		\end{tikzpicture}

\subsection{Converting product rules for type $SL_2$ to product formulas for type $GL_2$}

Assume that multiplication rules for multiplying type $SL_2$ Macdonald polynomials are given by
$$E_\ell(x) P_m(x) = \sum_{j=0}^{\ell-1} a_j^{(\ell)}(q^m) E_{m+\ell-2j}(x)
+\sum_{j=0}^{\ell-1} b_j^{(\ell)}(q^m) E_{-m+\ell-2j}(x),
$$
and
$$
P_\ell(x) P_m(x) = \sum_{j=0}^\ell c_j^{(\ell)}(q^m) E_{m+\ell-2j}(x).
$$
Assume $(\nu_1, \nu_2)\in \ZZ^2$ and $(\mu_1, \mu_2)\in \ZZ^2$ with $\mu_1\ge \mu_2$.  Let
$$\ell = \nu_1-\nu_2\qquad\hbox{and}\qquad m = \mu_1-\mu_2
\qquad\hbox{and}\qquad
d = \mu_1+\mu_2+\nu_1+\nu_2.
$$
Then
$$\begin{array}{c}
m+\ell-2j+d = 2(\mu_1+\nu_1-j), \\
-(m+\ell-2j)+d = 2(\mu_2+\nu_2+j),
\end{array}
\qquad\hbox{and}\qquad
\begin{array}{c}
-m+\ell-2j+d = 2(\mu_2+\nu_1-j), \\
-(-m+\ell-2j)+d = 2(\mu_1+\nu_2+j).
\end{array}
$$
Thus, with $y=x_1^{\frac12}x_2^{\frac12}$ and 
$x=x_1^{\frac12}x_2^{-\frac12}$ as in \eqref{varchange}, the conversions in \eqref{SL2toGL2} and \eqref{GL2toSL2}
give
\begin{align*}
P_{(\nu_1,\nu_2)}&(x_1,x_2)P_{(\mu_1,\mu_2)}(x_1,x_2)
=y^{\nu_1+\nu_2} P_{\ell}(x)
y^{\mu_1+\mu_2} P_{m}(x)
= \sum_{j=0}^{\nu_1-\nu_2} c_j^{(\ell)}(q^m) y^d P_{m+\ell-2j}(x)
\\
&= \sum_{j=0}^{\nu_1-\nu_2} c_j^{(\nu_1-\nu_2)}(q^{\mu_1-\mu_2}) P_{(\mu_1+\nu_1-j, \mu_2+\nu_2+j)}(x_1,x_2),
\end{align*}
and
\begin{align*}
&E_{(\nu_1,\nu_2)}(x_1,x_2)P_{(\mu_1,\mu_2)}(x_1,x_2)
=y^{\nu_1+\nu_2} E_{\nu_1-\nu_2}(x)
y^{\mu_1+\mu_2} P_{\mu_1-\mu_2}(x)
= y^d E_\ell(x) P_m(x)
\\
&= \sum_{j=0}^{\nu_1-\nu_2-1} a_j^{(\ell)}(q^m) y^d
E_{m+\ell-2j}(x) 
+ \sum_{j=0}^{\nu_1-\nu_2-1} b_j^{(\ell)}(q^m) y^d
E_{-m+\ell-2j}(x)
\\
&= \sum_{j=0}^{\nu_1-\nu_2-1} a_j^{(\nu_1-\nu_2)} (q^{\mu_1-\mu_2})\, E_{(\mu_1+\nu_1-j, \mu_2+\nu_2+j)}(x_1,x_2)
\\
&\qquad
+ \sum_{j=0}^{\nu_1-\nu_2-1} b_j^{(\nu_1-\nu_2)}(q^{\mu_1-\mu_2})\, E_{(\mu_2+\nu_1-j,\mu_1+\nu_2+j)}(x_1,x_2),
\end{align*}
and these are the multiplication rules for type $GL_2$ Macdonald polynomials.

%

\subsection{Comparison of the $GL_2$ case to Macdonald}\label{Maccomp}

A horizontal strip $\lambda/\mu$ of length $\ell$ is a pair
$\lambda = (\lambda_1,\lambda_2)$ and $\mu= (\mu_1,\mu_2)$ of partitions such that
{\def\A{8} \def\B{3} \def\C{2} \def\K{10} \def\H{.65} \def\L{7} 
$$\mu_2\le \lambda_2\le \mu_1\le \lambda_1
\quad\hbox{and}\quad
\lambda_1-\mu_1+\lambda_2-\mu_2=\ell.
\qquad\qquad
\TikZ{[xscale=.4,yscale=-.4]
		\filldraw[black!20] (0,0) to (\A + \B - \C,0) to (\A + \B - \C,1) to (\C,1) to (\C,2) to (0,2) to (0,0);
		\draw (0,0) rectangle (\A + \B + \K - \L,1);
		\draw (0,1) to (0,2) to (\L,2) to (\L,1);
		\draw (\A + \B - \C,0) to (\A + \B - \C,1) (\C,2) to (\C,1); 
		\begin{scope}[<->, shorten >=2pt, shorten <=2pt, every node/.style={fill=white,inner sep=2pt}]
		\draw (0,2+\H) to node{\footnotesize$\mu_2$} (\C,2+\H);
		\draw (\C,2.7) to node{\footnotesize$\lambda_2-\mu_2$} (\L,2.7);
		\draw(0,-\H) to node{\footnotesize$\mu_1$} (\A + \B - \C,-\H);
		\draw(\A + \B - \C,-\H) to node{\footnotesize$\lambda_1-\mu_1$} (\A + \B + \K - \L,-\H);
		\end{scope}
		} $$}
Following \cite[VI \S6 Ex. 2a]{Mac}, define
\begin{align*}
\varphi_{\lambda/\mu} 
&= \prod_{1\le i\le j\le \ell(\lambda)} 
\frac{f(q^{\lambda_i-\lambda_j}t^{j-i})}{f(q^{\lambda_i-\mu_j}t^{j-i})}
\frac{f(q^{\mu_i-\mu_{j+1}}t^{j-i}) }{ f(q^{\mu_i-\lambda_{j+1}}t^{j-i})},
\qquad\hbox{where}\qquad
f(u) = \frac{(tu;q)_\infty}{(qu;q)_\infty}
\end{align*}
and $(z,q)_\infty = (1-z)(1-zq)(1-zq^2)\cdots$.
Then \cite[VI \S6 Ex. 2a]{Mac} gives that
$$\hbox{if}\qquad g_\ell = \frac{(t,q)_\ell}{(q;q)_\ell} P_{(\ell,0)}(x_1,x_2)
\qquad\hbox{then}\qquad
g_\ell P_{(\mu_1,\mu_2)}  = \sum_{\lambda} \varphi_{\lambda/\mu} P_{(\lambda_1,\lambda_2)},
$$
where the sum is over $\lambda = (\lambda_1,\lambda_2)$ such that $\lambda/\mu$ is a horizontal strip
of length $\ell$.
Indeed this matches our results, in view of the following Proposition.

\begin{prop} \label{convtoMac}
Let $c_j^{(\ell)}(q^m)$ be as defined in \eqref{cdefn}.  Let $\lambda=(\lambda_1,\lambda_2)$
and $\mu=(\mu_1,\mu_2)$ be such that $\lambda/\mu$ is a horizontal strip of length $\ell$ and let
$m=\mu_1-\mu_2$ and $j = \lambda_2-\mu_2$. Then
$$\frac{(q,q)_\ell}{(t;q)_\ell} \varphi_{\lambda/\mu} = c_j^{(\ell)}(q^m).
$$
\end{prop}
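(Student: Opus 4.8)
The plan is to evaluate Macdonald's expression $\varphi_{\lambda/\mu}$ explicitly in the two-row case $\ell(\lambda)\le 2$ and then match it, factor by factor, against $\tfrac{(t;q)_\ell}{(q;q)_\ell}\,c_j^{(\ell)}(q^m)$, which is the form the asserted identity takes once the prefactor is moved across. Writing out $\genfrac[]{0pt}{0}{\ell}{j}_{q,t}$ from \eqref{introqtbin}, the prefactor cancels the $\tfrac{(q;q)_\ell}{(t;q)_\ell}$ inside the $q$-$t$-binomial coefficient, so the target reduces to showing
\[
\varphi_{\lambda/\mu}
=\frac{(t;q)_j\,(t;q)_{\ell-j}}{(q;q)_j\,(q;q)_{\ell-j}}
\cdot\frac{(q^{m-j+1};q)_j}{(tq^{m-j};q)_j}
\cdot\frac{(t^2q^{m+\ell-2j};q)_j}{(tq^{m+\ell-2j+1};q)_j}.
\]

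First I would expand the product defining $\varphi_{\lambda/\mu}$ over the three index pairs with $1\le i\le k\le 2$ (renaming the running index to avoid the clash with the parameter $j$). Putting $\lambda_3=\mu_3=0$, the two factors of the second quotient at $k=2$ coincide and cancel, while the diagonal factors $f(q^{\lambda_i-\lambda_i})=f(1)$ survive. Substituting the horizontal-strip parametrization $\lambda_2=\mu_2+j$, $\lambda_1=\mu_1+(\ell-j)$ and $m=\mu_1-\mu_2$, so that $\lambda_1-\lambda_2=m+\ell-2j$, $\lambda_1-\mu_1=\ell-j$, $\mu_1-\lambda_2=m-j$, $\lambda_2-\mu_2=j$, and $\lambda_1-\mu_2=m+\ell-j$, this collapses to
\begin{equation*}
\varphi_{\lambda/\mu}
=\frac{f(1)^2\, f(q^m)\, f(tq^{m+\ell-2j})}
{f(q^{\ell-j})\, f(q^{m-j})\, f(q^{j})\, f(tq^{m+\ell-j})}.
\end{equation*}

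Next I would insert $f(u)=(tu;q)_\infty/(qu;q)_\infty$ (so in particular $f(tq^a)=(t^2q^a;q)_\infty/(tq^{a+1};q)_\infty$) into each of the seven factors and sort the resulting fourteen infinite products into three families according to whether the base is $q^{(\cdot)}$, $tq^{(\cdot)}$, or $t^2q^{(\cdot)}$. Within each family the exponents differ by the nonnegative integers $j$ and $\ell-j$ — nonnegativity being exactly the horizontal-strip inequalities $\mu_2\le\lambda_2\le\mu_1\le\lambda_1$ — so the telescoping identity $(x;q)_\infty/(xq^n;q)_\infty=(x;q)_n$ reduces each family to finite Pochhammer symbols. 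I expect the $t^2$-family to give $(t^2q^{m+\ell-2j};q)_j$, the $q$-family to give $\tfrac{(q^{m-j+1};q)_j}{(q;q)_j(q;q)_{\ell-j}}$ (pairing $(q^{m-j+1};q)_\infty/(q^{m+1};q)_\infty$ and absorbing the two copies of $(q;q)_\infty$), and the $t$-family to give $\tfrac{(t;q)_j(t;q)_{\ell-j}}{(tq^{m-j};q)_j\,(tq^{m+\ell-2j+1};q)_j}$ (the two copies of $(t;q)_\infty$ pairing with $(tq^{j};q)_\infty$ and $(tq^{\ell-j};q)_\infty$).

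Multiplying the three contributions yields precisely the displayed target, proving the proposition. The main obstacle is purely organizational: correctly tracking the fourteen infinite products and pairing them across the three families so that every telescoping has a nonnegative length. I would also check the boundary case $j=0$ (equivalently $\lambda_2=\mu_2=0$, where $\ell(\lambda)$ may drop to $1$): using $n=2$ uniformly the spurious factors reduce, via $f(q^0)=f(1)$, to $f(1)/f(q^\ell)=(t;q)_\ell/(q;q)_\ell$, which agrees with $\tfrac{(t;q)_\ell}{(q;q)_\ell}c_0^{(\ell)}=\tfrac{(t;q)_\ell}{(q;q)_\ell}$, confirming that the two-variable evaluation is consistent with the general formula.
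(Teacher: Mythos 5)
Your proposal is correct and takes essentially the same route as the paper's own proof: both expand Macdonald's product formula for $\varphi_{\lambda/\mu}$ in the two-row case, convert each factor $f(u)=(tu;q)_\infty/(qu;q)_\infty$ into finite Pochhammer symbols by telescoping, and match the result with $c_j^{(\ell)}(q^m)$ after absorbing the $q$-$t$-binomial coefficient. The only difference is order of operations (you specialize to $\ell(\lambda)\le 2$ before telescoping, the paper telescopes the general product first and then sets $\ell(\lambda)=2$); your intermediate seven-factor formula and three-family bookkeeping are correct, and your computation confirms the prefactor $(q;q)_\ell/(t;q)_\ell$ of the statement, the $(tq;q)_\ell$ appearing in the paper's final displayed line being a typo.
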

\begin{proof}
Letting $\lambda_i = \mu_i + a_i$ gives
\begin{align*}
\varphi_{\lambda/\mu} 
&= \prod_{1\le i\le j\le \ell(\lambda)} 
\frac{f(q^{a_i-a_j}q^{\mu_i-\mu_j} t^{j-i})}{f(q^{a_i}q^{\mu_i-\mu_j} t^{j-i})}
\frac{f(t^{-1}q^{\mu_i-\mu_{j+1}}t^{j+1-i} ) }{f(t^{-1}q^{-a_{j+1}}q^{\mu_i-\mu_{j+1}}t^{j+1-i})}
\\
&= \prod_{1\le i\le j\le \ell(\lambda)} 
\frac{(tq^{a_i-a_j}q^{\mu_i-\mu_j}t^{j-i};q)_{\infty}}
{(qq^{a_i-a_j}q^{\mu_i-\mu_j}t^{j-i};q)_{\infty}}
\frac{(qq^{a_i}q^{\mu_i-\mu_j}t^{j-i};q)_{\infty}}
{(tq^{a_i}q^{\mu_i-\mu_j}t^{j-i};q)_{\infty}}
\\
&\qquad
\cdot
\frac{(q^{\mu_i-\mu_{j+1}}t^{j+1-i};q)_\infty}
{(qt^{-1}q^{\mu_i-\mu_{j+1}}t^{j+1-i};q)_\infty}
\frac{(qt^{-1}q^{-a_{j+1}}q^{\mu_i-\mu_{j+1}}t^{j+1-i};q)_\infty}
{(q^{-a_{j+1}}q^{\mu_i-\mu_{j+1}}t^{j+1-i};q)_\infty}
\\
&= \prod_{1\le i\le j\le \ell(\lambda)} 
\frac{(tq^{a_i-a_j}q^{\mu_i-\mu_j}t^{j-i};q)_{a_j}}
{(qq^{a_i-a_j}q^{\mu_i-\mu_j}t^{j-i};q)_{a_j}}
\frac{(qt^{-1}q^{-a_{j+1}}q^{\mu_i-\mu_{j+1}}t^{j+1-i};q)_{a_{j+1}}}
{(q^{-a_{j+1}}q^{\mu_i-\mu_{j+1}}t^{j+1-i};q)_{a_{j+1}}}
\end{align*}
When $i=j$ the first factor is
$$\frac{(tq^{a_i-a_j}q^{\mu_i-\mu_j}t^{j-i};q)_{a_j}}
{(qq^{a_i-a_j}q^{\mu_i-\mu_j}t^{j-i};q)_{a_j}}
=\frac{(t;q)_{a_j}}{(q;q)_{a_j}},
$$
and when $j+1=n$ so that $a_{j+1}=0$ and $\mu_{j+1}=0$.
$$
\frac{(qt^{-1}q^{-a_{j+1}}q^{\mu_i-\mu_{j+1}}t^{j+1-i};q)_{a_{j+1}}}
{(q^{-a_{j+1}}q^{\mu_i-\mu_{j+1}}t^{j+1-i};q)_{a_{j+1}}}
=
\frac{(qt^{-1}q^{\mu_i}t^{j+1-i};q)_0}
{(q^{\mu_i}t^{j+1-i};q)_0}=1.
$$
Thus, when $\ell(\lambda)=2$,
\begin{align*}
\varphi_{\lambda/\mu} 
&= \frac{(t;q)_{a_1}}{(q;q)_{a_1}}\frac{(t;q)_{a_2}}{(q;q)_{a_2}}
\cdot \frac{(tq^{a_1-a_2}q^{\mu_1-\mu_2}t^{2-1};q)_{a_2}}{(q^{a_1-a_2+1}q^{\mu_1-\mu_2}t^{2-1};q)_{a_2}}
\cdot \frac{(t^{-1}q^{-a_2+1}q^{\mu_1-\mu_2}t^{2-1};q)_{a_2}}{(q^{-a_2}q^{\mu_1-\mu_2}t^{2-1};q)_{a_2}}.
\end{align*}
Since $m=\mu_1-\mu_2$ and $j= \lambda_2-\mu_2 = a_2$ then
$a_1-a_2 = (a_1+a_2)-2a_2 = \ell-2j$ and
\begin{align*}
\frac{(q;q)_\ell}{(tq;q)_\ell}
\varphi_{\lambda/\mu} 
&=
\genfrac[]{0pt}{0}{\ell}{j}_{q,t}
\cdot \frac{(t^2q^m q^{\ell-2j};q)_j} {(tq^m q^{\ell-2j+1};q)_j}
\frac{(q^m q^{-(j-1)};q)_j} {(tq^m q^{-j};q)_j}
=c_j^{(\ell)}(q^m).
\end{align*}
\end{proof}


\section{DAHA for $SL_2$ and the polynomial representation}

In this section we introduce the type $SL_2$ double affine Hecke algebra and its polynomial
representation.  The double affine Hecke algebra is a source for a myriad of operators acting on 
polynomials.  In this section we carefully establish the identites between operators that will
enable us to compute products of Macdonald polynomials.

\subsection{The double affine Hecke algebra (DAHA) for type $SL_2$}\label{DAHAdefn}

Fix $q^{\frac12}, t^{\frac12}\in \CC^\times$.  
Following \cite[(6.1.2), (6.1.3)]{Mac03}, the double affine Hecke algebra for $SL_2$ is the $\CC$-algebra
$\tilde H_{\mathrm{int}}$ generated by
$T^{\pm1}_1, X^{\pm1}, Y^{\pm1}, T^{\pm1}_\pi$ with relations $T_1T_1^{-1} = T_1^{-1}T_1 = 1$,
$XX^{-1}=X^{-1}X=1$, $YY^{-1}=Y^{-1}Y=1$, $T_\pi T^{-1}_\pi=T^{-1}_\pi T_\pi=1$ and
$$T_\pi =YT_1^{-1} = T_1Y^{-1}, 
\qquad T_\pi X T_\pi^{-1} = q^{\frac12}X^{-1},
$$
\begin{equation}
T_1XT_1 = X^{-1}, \qquad T_1Y^{-1}T_1 = Y,
\qquad (T_1-t^{\frac12})(T_1+t^{-\frac12}) = 0.
\label{Hdefn}
\end{equation}
It follows from the relations $T_1XT_1 = X^{-1}$ and $T_1-T^{-1}_1 = t^{\frac12}-t^{-\frac12}$ that
\begin{equation}
T_1X^r = X^{-r}T_1 + (t^{\frac12}-t^{-\frac12})\frac{X^r-X^{-r}}{1-X^2},
\qquad\hbox{for $r\in \ZZ$.}
\label{TpastX}
\end{equation}

As a left module for the Laurent polynomial ring $\CC[Y,Y^{-1}]$, the double affine Hecke algebra
$\tilde H_{\mathrm{int}}$ has basis
$\{X^k\ |\ k\in \ZZ\} \sqcup \{ X^kT_1\ |\ k\in \ZZ\}$.  Letting $\CC(Y)$ denote the field of fractions of
$\CC[Y,Y^{-1}]$, the \emph{localized double affine Hecke algebra},
$$\tilde H = \CC(Y)\otimes_{\CC[Y,Y^{-1}]} \tilde H_{\mathrm{int}},$$
is the algebra with $\CC(Y)$-basis  $\{X^k\ |\ k\in \ZZ\} \sqcup \{ X^kT_1\ |\ k\in \ZZ\}$ (as a left $\CC(Y)$-module) and
the relations in \eqref{Hdefn}.  Although the polynomial representation of $\tilde H_{\mathrm{int}}$ 
(which is where the Macdonald polynomials live, see \S \ref{sec:polyrep}) 
is not a $\tilde H$-module, there are operators
on the polynomial representation which we can source from the larger algebra $\tilde H$.  The operators
which we wish to access are the intertwiners $\tau^\vee_\pi$ and $\tau^\vee_1$ and
the normalized interwiners $\eta_{s_1}$, $\eta_\pi$, $\eta$ and $\eta^{-1}$, which are defined below in 
\eqref{intdefn}, \eqref{nintdefn}, \eqref{etadefn} and \eqref{etainvdefn}.

\subsubsection{Intertwiners and the bosonic symmetrizer}

The \emph{intertwiners} $\tau^\vee_1$ and $\tau^\vee_\pi$ and 
the \emph{bosonic symmetrizer} $\mathbf{1}_0$ are defined by
\begin{equation}
\tau^\vee_1
= T_1+t^{-\frac12}\frac{(1-t)}{(1-Y^{-2})},\qquad
\tau^\vee_\pi  = XT_1,
\qquad\hbox{and}\qquad
\mathbf{1}_0 = T_1+t^{-\frac12}.
\label{intdefn}
\end{equation}
In \cite[(6.1.6) and (6.18)]{Mac03}, $\tau^\vee_1$ is denoted $\mathbf{\alpha}$ and $\tau^\vee_\pi$ is denoted $\mathbf{\beta}$.

\begin{prop}
Then
\begin{align}
\tau^\vee_1 Y &= Y^{-1}\tau^\vee_1,
\qquad
&\tau^\vee_\pi Y &= Y^{-1} q^{-\frac12} \tau^\vee_\pi,
\label{taupastY}
\\
(\tau^\vee_1)^2  
&=t^{-1} \frac{(1-tY^2)(1-tY^{-2})}{(1-Y^2)(1-Y^{-2})},
\qquad
&(\tau^\vee_\pi)^2 &= 1,
\label{tausq}
\end{align}
\begin{equation}
T_1 \mathbf{1}_0 = \mathbf{1}_0 T_1 = t^{\frac12} \mathbf{1}_0,
\qquad
\mathbf{1}_0\tau^\vee_1 = \mathbf{1}_0 t^{-\frac12} \frac{(1-tY^{-2})}{(1-Y^{-2})},
\qquad
\tau^\vee_1 \mathbf{1}_0 = t^{-\frac12} \frac{(1-tY^{-2})}{(1-Y^{-2})}\mathbf{1}_0,
\label{10rels}
\end{equation}
\begin{align}
\mathbf{1}_0^2=\mathbf{1}_0(t^{\frac12}+t^{-\frac12}) \qquad\hbox{and}\qquad
\mathbf{1}_0 
=\tau^\vee_1+t^{-\frac12}\frac{(1-tY^2)}{(1-Y^2)}
=\tau^\vee_1+t^{\frac12}\frac{(1-t^{-1}Y^{-2})}{(1-Y^{-2})}.
\label{10intau}
\end{align}
\end{prop}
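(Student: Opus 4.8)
The plan is to verify every identity by direct computation in $\tilde H$, using only the defining relations \eqref{Hdefn} and the definitions \eqref{intdefn}, after first recording a few elementary consequences. From $(T_1-t^{\frac12})(T_1+t^{-\frac12})=0$ one gets $T_1^2 = 1+(t^{\frac12}-t^{-\frac12})T_1$ and $T_1^{-1}=T_1-(t^{\frac12}-t^{-\frac12})$. Since $T_\pi = YT_1^{-1}=T_1Y^{-1}$, taking inverses gives $T_\pi^{-1}=T_1Y^{-1}=T_\pi$, so $T_\pi^2=1$. I would also record how $T_1$ moves past $Y$: multiplying $T_1Y^{-1}T_1=Y$ on the left by $T_1^{-1}$ and substituting $T_1^{-1}=T_1-(t^{\frac12}-t^{-\frac12})$ yields $T_1Y = Y^{-1}T_1 + (t^{\frac12}-t^{-\frac12})Y$, the $Y$-analogue of \eqref{TpastX}. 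A one-line cancellation then gives $\tau^\vee_1 Y = Y^{-1}\tau^\vee_1$, which proves the first identity of \eqref{taupastY} and, more usefully, shows $\tau^\vee_1 f(Y) = f(Y^{-1})\tau^\vee_1$ for every $f\in\CC(Y)$.

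For the $\pi$-intertwiner I would use $T_1XT_1=X^{-1}$ together with $XT_\pi = q^{\frac12}T_\pi X^{-1}$ (which follows from $T_\pi X T_\pi^{-1}=q^{\frac12}X^{-1}$ and $T_\pi^2=1$). Then $(\tau^\vee_\pi)^2 = XT_1XT_1 = X(T_1XT_1) = XX^{-1}=1$, giving the second identity of \eqref{tausq}. For the second identity of \eqref{taupastY} the clean route is to write $Y = T_\pi T_1$ (from $T_\pi = YT_1^{-1}$) and compute $\tau^\vee_\pi Y \tau^\vee_\pi = XT_1(T_\pi T_1)XT_1 = XT_1T_\pi(T_1XT_1) = XT_1T_\pi X^{-1}$; then $T_\pi X^{-1}=q^{-\frac12}XT_\pi$ and $T_1X=X^{-1}T_1^{-1}$ collapse this to $q^{-\frac12}T_1^{-1}T_\pi = q^{-\frac12}Y^{-1}$. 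Since $(\tau^\vee_\pi)^{-1}=\tau^\vee_\pi$, this is exactly $\tau^\vee_\pi Y = q^{-\frac12}Y^{-1}\tau^\vee_\pi$.

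The computation I expect to be the crux is the first identity of \eqref{tausq}. Writing $\tau^\vee_1 = T_1 + c(Y)$ with $c(Y)=t^{-\frac12}\frac{1-t}{1-Y^{-2}}$, I would expand $(\tau^\vee_1)^2 = T_1^2 + (c(Y)+c(Y^{-1}))T_1 + c(Y)c(Y^{-1})$, where the intertwining relation $\tau^\vee_1 c(Y)=c(Y^{-1})\tau^\vee_1$ is what moves one copy of $c$ to the left. Substituting $T_1^2 = 1+(t^{\frac12}-t^{-\frac12})T_1$, the coefficient of $T_1$ becomes $(t^{\frac12}-t^{-\frac12})+c(Y)+c(Y^{-1})$, and the heart of the matter is the identity $c(Y)+c(Y^{-1}) = -(t^{\frac12}-t^{-\frac12})$, which holds because $\frac{1}{1-Y^{-2}}+\frac{1}{1-Y^2}=1$. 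This makes the $T_1$-term vanish, leaving $(\tau^\vee_1)^2 = 1 + c(Y)c(Y^{-1})$, and a short rational-function simplification matches this against $t^{-1}\frac{(1-tY^2)(1-tY^{-2})}{(1-Y^2)(1-Y^{-2})}$. The only thing to watch throughout is that $T_1$ does not commute with $\CC(Y)$, so every rearrangement must route through the commutation relation above; the conceptual point is that the normalization $\frac{1-t}{1-Y^{-2}}$ is chosen precisely so that the $T_1$-coefficient cancels.

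Finally, the $\mathbf{1}_0$-relations \eqref{10rels} and \eqref{10intau} reduce to the quadratic relation and one scalar identity in $\CC(Y)$. From $T_1\mathbf{1}_0 = T_1^2+t^{-\frac12}T_1 = 1+t^{\frac12}T_1 = t^{\frac12}\mathbf{1}_0$ (and symmetrically $\mathbf{1}_0 T_1=t^{\frac12}\mathbf{1}_0$) I get the first part of \eqref{10rels}, and then $\mathbf{1}_0^2 = \mathbf{1}_0(T_1+t^{-\frac12}) = (t^{\frac12}+t^{-\frac12})\mathbf{1}_0$. For the rest I would verify the single identity $t^{\frac12}+c(Y) = t^{-\frac12}\frac{1-tY^{-2}}{1-Y^{-2}}=:d(Y)$; then $\mathbf{1}_0\tau^\vee_1 = \mathbf{1}_0T_1+\mathbf{1}_0 c(Y) = \mathbf{1}_0(t^{\frac12}+c(Y)) = \mathbf{1}_0 d(Y)$ and likewise $\tau^\vee_1\mathbf{1}_0 = (t^{\frac12}+c(Y))\mathbf{1}_0 = d(Y)\mathbf{1}_0$, using $T_1\mathbf{1}_0=\mathbf{1}_0 T_1=t^{\frac12}\mathbf{1}_0$. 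The two displayed forms of $\mathbf{1}_0$ in \eqref{10intau} are the single identity $\mathbf{1}_0-\tau^\vee_1 = t^{-\frac12}-c(Y) = t^{-\frac12}\frac{t-Y^{-2}}{1-Y^{-2}}$, rewritten as $t^{-\frac12}\frac{1-tY^2}{1-Y^2}$ and as $t^{\frac12}\frac{1-t^{-1}Y^{-2}}{1-Y^{-2}}$ by clearing powers of $Y^{\pm2}$.
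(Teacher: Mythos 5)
Your proposal is correct and follows essentially the same route as the paper: direct verification from the defining relations \eqref{Hdefn}, establishing the intertwining property $\tau^\vee_1 f(Y)=f(Y^{-1})\tau^\vee_1$ first and then reducing $(\tau^\vee_1)^2$ and the $\mathbf{1}_0$-identities to rational-function identities in $\CC(Y)$ — your key cancellation $c(Y)+c(Y^{-1})=-(t^{\frac12}-t^{-\frac12})$ is exactly what the paper achieves by pairing the two expressions $\tau^\vee_1=T_1+c(Y)=T_1^{-1}+t^{-\frac12}\frac{(1-t)Y^{-2}}{(1-Y^{-2})}$. The only cosmetic differences are your conjugation computation $\tau^\vee_\pi Y\tau^\vee_\pi=q^{-\frac12}Y^{-1}$ in place of the paper's substitution chain, and your use of $T_1Y=Y^{-1}T_1+(t^{\frac12}-t^{-\frac12})Y$ where the paper uses $T_1^{-1}Y=Y^{-1}T_1$; both are equivalent one-line consequences of the same relations.
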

\begin{proof}  Using the relations in \eqref{Hdefn},
$(\tau^\vee_\pi)^2 = XT_1XT_1 = XX^{-1} = 1$ and
\begin{align*}
\tau^\vee_\pi Y 
&= XT_1 Y=T_1^{-1}X^{-1}Y 
= T_1^{-1}q^{-\frac12}T_\pi X T_\pi^{-1}Y \\
&= T_1^{-1}q^{-\frac12}T_\pi X T_\pi^{-1}T_\pi T_1 
= q^{-\frac12}T_1^{-1}T_\pi X T_1
= q^{-\frac12} T_1^{-1}T_\pi \tau^\vee_\pi
= q^{-\frac12} Y^{-1} \tau^\vee_\pi.
\end{align*}
Using
\begin{align}
\tau^\vee_1
&= T_1+t^{-\frac12}\frac{(1-t)}{(1-Y^{-2})}
= T_1^{-1} +(t^{\frac12}-t^{-\frac12})+t^{-\frac12}\frac{(1-t)}{(1-Y^{-2})} 
\nonumber \\
&=T_1^{-1} +(1-t) \frac{-t^{-\frac12}(1-Y^{-2})+t^{-\frac12} }{(1-Y^{-2})}
= T_1^{-1} + t^{-\frac12}\frac{(1-t)Y^{-2}}{(1-Y^{-2})},
\label{tau1Tinv}
\end{align}
then
\begin{align*}
\tau^\vee_1 Y 
&= \Big(T_1^{-1}+t^{-\frac12}\frac{(1-t)Y^{-2}}{(1-Y^{-2})}\Big)Y
= T_1^{-1}Y+t^{-\frac12}\frac{(1-t)Y^{-1}}{(1-Y^{-2})} 
\\
&= Y^{-1}T_1+t^{-\frac12}\frac{(1-t)Y^{-1}}{(1-Y^{-2})}
= Y^{-1}\Big(T_1+t^{-\frac12}\frac{(1-t)}{(1-Y^{-2})} \Big)
=Y^{-1}\tau^\vee_1
\end{align*}
and
\begin{align*}
(\tau^\vee_1)^2
&= \Big(T_1+t^{-\frac12}\frac{(1-t)}{(1-Y^{-2})}\Big)\tau^\vee_1
= T_1\tau^\vee_1+\tau^\vee_1 t^{-\frac12}\frac{(1-t)}{(1-Y^{2})}
\\
&= T_1 \Big(T_1^{-1} + t^{-\frac12}\frac{(1-t)Y^{-2}}{(1-Y^{-2})}\Big)
+\Big(T_1+t^{-\frac12}\frac{(1-t)}{(1-Y^{-2})}\Big)t^{-\frac12}\frac{(1-t)}{(1-Y^{2})}
\\
&=1+t^{-1}\frac{(1-t)}{(1-Y^{-2})}\frac{(1-t)}{(1-Y^{2})}
=\frac{(1-Y^{-2}-Y^2+1+t^{-1}-2+t)}{(1-Y^2)(1-Y^{-2})}
\\
&=t^{-1} \frac{(1-tY^2)(1-tY^{-2})}{(1-Y^2)(1-Y^{-2})}.
\end{align*}
Since
$\mathbf{1}_0 = T_1+t^{-\frac12} = T_1-T^{-1}_1+T^{-1}_1 +t^{-\frac12} 
= (t^{\frac12}-t^{-\frac12})+T^{-1}_1 +t^{-\frac12} = T^{-1}_1+t^{\frac12}
$
then
$$\mathbf{1}_0T_1 = (T^{-1}_1+t^{\frac12})T_1 = 1+t^{\frac12}T_1 
= t^{\frac12}(T_1+t^{-\frac12}) = t^{\frac12}\mathbf{1}_0
\quad\hbox{and}\quad
\mathbf{1}_0^2 = \mathbf{1}_0(T_1+t^{-\frac12}) = \mathbf{1}_0(t^{\frac12}+t^{-\frac12}).
$$
Similarly for the product $T_1\mathbf{1}_0$.  Then
$$\mathbf{1}_0\tau_1^\vee 
= \mathbf{1}_0\Big(T_1+t^{-\frac12}\frac{(1-t)}{(1-Y^{-2})}\Big)
= \mathbf{1}_0\Big(t^{\frac12}+t^{-\frac12}\frac{(1-t)}{(1-Y^{-2})}\Big)
= \mathbf{1}_0 t^{-\frac12}\frac{(1-tY^{-2})}{(1-Y^{-2})}
$$
and similarly for the product $\tau^\vee_1\mathbf{1}_0$.
Finally
$$
\mathbf{1}_0 
= \tau^\vee_1 -t^{-\frac12}\frac{(1-t)}{(1-Y^{-2})} +t^{-\frac12}
=\tau^\vee_1+t^{-\frac12}\frac{(t-Y^{-2})}{(1-Y^{-2})}
=\tau^\vee_1+t^{-\frac12}\frac{(1-tY^2)}{(1-Y^2)}.
$$
\end{proof}

\subsubsection{Normalized intertwiners}

Define normalized intertwiners
\begin{equation}
\eta_\pi = \tau^\vee_\pi
\qquad\hbox{and}\qquad
\eta_{s_1} = t^{\frac12} \frac{(1-Y^{-2})}{(1-tY^{-2})}\tau^\vee_1.
\label{nintdefn}
\end{equation}
Then define
\begin{align}
\eta &= \eta_\pi \eta_{s_1} = \tau^\vee_\pi t^{\frac12}\frac{(1-Y^{-2})}{(1-tY^{-2})} \tau^\vee_1
= t^{\frac12}\frac{(1-Y^{2}q)}{(1-tY^{2}q)}\tau^\vee_\pi \tau^\vee_1
\label{etadefn}
\\
\hbox{and}\qquad\qquad
\eta^{-1} &= \eta_{s_1}\eta_\pi = t^{\frac12}\frac{(1-Y^{-2})}{(1-tY^{-2})} \tau^\vee_1\tau^\vee_\pi.
\label{etainvdefn}
\end{align}

\noindent
\textbf{Warning:} Although $\eta$ and $\eta^{-1}$ are inverses of each other as elements of $\tilde H$,
and these are well defined operators on the polynomial representation (see Proposition \ref{etaonEnew}),
as operators on the polynomial representation $\eta$ and $\eta^{-1}$ are not invertible operators.

\begin{prop} The following relations hold in $\tilde H$:
\begin{equation}
\eta_\pi^2 = 1, \qquad \eta_{s_1}^2 = 1, \qquad
\eta\eta_{s_1} = \eta_{s_1}\eta^{-1}.
\label{etarels}
\end{equation}
\begin{equation}
\eta_\pi Y = Y^{-1}q^{-\frac12} \eta_\pi, \qquad \eta_{s_1} Y = Y^{-1}\eta_{s_1},
\qquad \eta Y =  Y q^{\frac12} \eta,
\label{etapastY}
\end{equation}
\begin{equation}
\mathbf{1}_0 = (1+\eta_{s_1})t^{-\frac12}\frac{(1-tY^2)}{(1-Y^2)},
\qquad
\eta_{s_1}\mathbf{1}_0 = \mathbf{1}_0,
\qquad
\mathbf{1}_0 \eta_{s_1} =\mathbf{1}_0 t^{-1} \frac{(1-tY^{-2})}{(1-t^{-1}Y^{-2})}.
\label{10ineta}
\end{equation}
\end{prop}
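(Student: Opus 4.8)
The plan is to reduce every relation to the already-established identities \eqref{taupastY}, \eqref{tausq}, \eqref{10rels} and \eqref{10intau}, using repeatedly the single structural fact that $\tau^\vee_1$ (and hence $\eta_{s_1}$) conjugates $Y$ to $Y^{-1}$: from $\tau^\vee_1 Y = Y^{-1}\tau^\vee_1$ in \eqref{taupastY} one gets $\tau^\vee_1 f(Y) = f(Y^{-1})\tau^\vee_1$, equivalently $g(Y)\tau^\vee_1 = \tau^\vee_1 g(Y^{-1})$, for any rational $f,g$, and similarly $\eta_{s_1}f(Y) = f(Y^{-1})\eta_{s_1}$ once $\eta_{s_1}Y = Y^{-1}\eta_{s_1}$ is known. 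This ``move a function of $Y$ across an intertwiner by inverting $Y$'' is the workhorse throughout.

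First I would dispatch the involutivity relations in the first display. The identity $\eta_\pi^2 = 1$ is just $(\tau^\vee_\pi)^2 = 1$ from \eqref{tausq}. For $\eta_{s_1}^2 = 1$ I would substitute the definition $\eta_{s_1} = t^{\frac12}\frac{(1-Y^{-2})}{(1-tY^{-2})}\tau^\vee_1$ twice, push the left copy of $\tau^\vee_1$ across the rational prefactor of the right copy (turning $Y^{-2}\mapsto Y^{2}$), collect a factor $(\tau^\vee_1)^2$, and substitute its value from \eqref{tausq}; the rational functions then cancel completely to leave $1$. The relation $\eta\eta_{s_1} = \eta_{s_1}\eta^{-1}$ is then purely formal: using $\eta = \eta_\pi\eta_{s_1}$ and $\eta^{-1} = \eta_{s_1}\eta_\pi$ from \eqref{etadefn} and \eqref{etainvdefn} together with $\eta_{s_1}^2 = 1$, both sides collapse to $\eta_\pi$.

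For the second display, $\eta_\pi Y = Y^{-1}q^{-\frac12}\eta_\pi$ is exactly the relation for $\tau^\vee_\pi$ in \eqref{taupastY}. For $\eta_{s_1}Y = Y^{-1}\eta_{s_1}$ I would apply $\tau^\vee_1 Y = Y^{-1}\tau^\vee_1$ inside the definition and note that the scalar rational prefactor commutes with $Y^{-1}$. For $\eta Y = Yq^{\frac12}\eta$ I would write $\eta Y = \eta_\pi\eta_{s_1}Y = \eta_\pi Y^{-1}\eta_{s_1}$ and then use the inverted form $\eta_\pi Y^{-1} = Yq^{\frac12}\eta_\pi$ (obtained from the $\eta_\pi$ relation and $\eta_\pi^2 = 1$) to pull $Y$ to the far left.

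The third display is where the genuine bookkeeping lives. For $\mathbf{1}_0 = (1+\eta_{s_1})t^{-\frac12}\frac{(1-tY^2)}{(1-Y^2)}$ I would expand the right-hand side, move the rational factor across $\eta_{s_1}$ via $\eta_{s_1}f(Y)=f(Y^{-1})\eta_{s_1}$, and recognize that the $\eta_{s_1}$-term simplifies exactly to $\tau^\vee_1$; the result is $\tau^\vee_1 + t^{-\frac12}\frac{(1-tY^2)}{(1-Y^2)}$, which is the expression for $\mathbf{1}_0$ in \eqref{10intau}. The relation $\eta_{s_1}\mathbf{1}_0 = \mathbf{1}_0$ then follows immediately by multiplying this identity on the left by $\eta_{s_1}$ and using $\eta_{s_1}^2=1$. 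The main obstacle is the last relation $\mathbf{1}_0\eta_{s_1} = \mathbf{1}_0 t^{-1}\frac{(1-tY^{-2})}{(1-t^{-1}Y^{-2})}$: here one cannot simply pull the rational prefactor of $\eta_{s_1}$ through $\mathbf{1}_0$, since $\mathbf{1}_0$ contains $T_1$ and does not commute with functions of $Y$. I would instead first transport that prefactor to the right of $\tau^\vee_1$, rewriting $\eta_{s_1} = t^{\frac12}\tau^\vee_1\frac{(1-Y^2)}{(1-tY^2)}$, then apply $\mathbf{1}_0\tau^\vee_1 = \mathbf{1}_0 t^{-\frac12}\frac{(1-tY^{-2})}{(1-Y^{-2})}$ from \eqref{10rels}, and finally simplify the product of the two rational functions (clearing $Y^{\pm2}$) to the stated form. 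This last step is routine once the reordering is in place, but it is precisely where the noncommutativity of $\mathbf{1}_0$ with $\CC(Y)$ must be respected rather than bypassed.
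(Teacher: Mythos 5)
Your proposal is correct and follows essentially the same route as the paper: every relation is reduced to \eqref{taupastY}, \eqref{tausq}, \eqref{10rels} and \eqref{10intau} by moving rational functions of $Y$ across $\tau^\vee_1$, $\eta_{s_1}$, $\eta_\pi$ with $Y\mapsto Y^{-1}$ (and $Y \mapsto q^{\mp\frac12}Y^{-1}$ for $\eta_\pi$), including the crucial point for $\mathbf{1}_0\eta_{s_1}$ of rewriting $\eta_{s_1}=t^{\frac12}\tau^\vee_1\frac{(1-Y^2)}{(1-tY^2)}$ and then invoking $\mathbf{1}_0\tau^\vee_1=\mathbf{1}_0 t^{-\frac12}\frac{(1-tY^{-2})}{(1-Y^{-2})}$, exactly as the paper does. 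The only (immaterial) deviations are that you verify the factorization $\mathbf{1}_0=(1+\eta_{s_1})t^{-\frac12}\frac{(1-tY^2)}{(1-Y^2)}$ by expanding the right-hand side rather than factoring \eqref{10intau}, and you deduce $\eta_{s_1}\mathbf{1}_0=\mathbf{1}_0$ from that factorization plus $\eta_{s_1}^2=1$ instead of directly from \eqref{10rels}.
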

\begin{proof}
From \eqref{tausq}, $\eta_\pi^2 = (\tau^\vee_\pi)^2 = 1$.  Using
$\tau^\vee_1 Y = Y^{-1}\tau^\vee_1$ and the formula for $(\tau^\vee_1)^2$ in \eqref{tausq} gives
\begin{align*}
\eta_{s_1}^2 
&= t^{\frac12}\frac{(1-Y^{-2})}{(1-tY^{-2})}\tau^\vee_1 t^{\frac12}\frac{(1-Y^{-2})}{(1-tY^{-2})}\tau^\vee_1
= t \frac{(1-Y^{-2})}{(1-tY^{-2})}\frac{(1-Y^{2})}{(1-tY^{2})} \tau^\vee_1 \tau^\vee_1 \\
&= t \frac{(1-Y^{-2})}{(1-tY^{-2})}\frac{(1-Y^{2})}{(1-tY^{2})}
\cdot t^{-1}\frac{(1-tY^2)(1-tY^{-2})}{(1-Y^2)(1-Y^{-2})}
=1.
\end{align*}
Then
$$\eta \eta_{s_1} = \eta_\pi \eta_{s_1}\eta_{s_1} = \eta_\pi = \eta_{s_1}\eta_{s_1}\eta_\pi = \eta_{s_1}\eta^{-1}.
$$
The relations $\eta_\pi Y = Y^{-1}q^{-\frac12} \eta_\pi$ and $\eta_{s_1}Y = Y^{-1}\eta_{s_1}$
follow from \eqref{taupastY} and
$$\eta Y = \eta_\pi \eta_{s_1} Y = \eta_\pi Y^{-1} \eta_{s_1} = Y q^{\frac12} \eta_\pi\eta_{s_1}
= Y q^{\frac12} \eta.$$
Using \eqref{10intau}, 
$$
\mathbf{1}_0 =\tau^\vee_1+t^{-\frac12}\frac{(1-tY^2)}{(1-Y^2)}
=\Big(t^{\frac12}\tau^\vee_1\frac{(1-Y^2)}{(1-tY^2)} +1 \Big)\cdot t^{-\frac12}\frac{(1-tY^2)}{(1-Y^2)}
=(\eta_{s_1}+1)t^{-\frac12}\frac{(1-tY^2)}{(1-Y^2)}.
$$
By the last identity in \eqref{10rels},
\begin{align*}
\eta_{s_1}\mathbf{1}_0 = t^{\frac12}\frac{(1-Y^{-2})}{(1-tY^{-2})} \tau^\vee_1\mathbf{1}_0 = 
t^{\frac12}\frac{(1-Y^{-2})}{(1-tY^{-2})}\cdot t^{-\frac12}\frac{(1-tY^{-2})}{(1-Y^{-2})}\mathbf{1}_0
=\mathbf{1}_0
\end{align*}
and, by the second identity in \eqref{10rels},
\begin{align*}
\mathbf{1}_0 \eta_{s_1} 
= \mathbf{1}_0 t^{\frac12}\frac{(1-Y^{-2})}{(1-tY^{-2})} \tau^\vee_1 
= \mathbf{1}_0 \tau^\vee_1 t^{\frac12}\frac{(1-Y^{2})}{(1-tY^{2})} 
= \mathbf{1}_0 t^{-\frac12}\frac{(1-tY^{-2})}{(1-Y^{-2})} t^{\frac12}\frac{(1-Y^{2})}{(1-tY^{2})} 
=\mathbf{1}_0 t^{-1} \frac{(1-tY^{-2})}{(1-t^{-1}Y^{-2})}.
\end{align*}
\end{proof}


\subsection{The polynomial representation $\tilde H_{\mathrm{int}}\mathbf{1}_Y$} \label{sec:polyrep}

Let $\tilde H_{\mathrm{int}} \mathbf{1}_Y$ be the $\tilde H_{\mathrm{int}}$ 
module generated by a single generator $\mathbf{1}_Y$ with
relations
$$T_1\mathbf{1}_Y = t^{\frac12}\mathbf{1}_Y
\qquad\hbox{and}\qquad
T_\pi \mathbf{1}_Y = \mathbf{1}_Y.$$
Then $Y\mathbf{1}_Y = T_\pi T_1\mathbf{1}_Y = t^{\frac12}\mathbf{1}_Y$ and
$$\tilde H_{\mathrm{int}}\mathbf{1}_Y \qquad\hbox{has $\CC$-basis}\qquad \{ X^k\mathbf{1}_Y\ |\ k\in \ZZ\}.$$
Using the second relation in \eqref{Hdefn} and \eqref{TpastX}, the action of $\tilde H_{\mathrm{int}}$ 
in the basis $\{X^k\mathbf{1}_Y\ |\ k\in \ZZ\}$ is given explicitly by
$$XX^r\mathbf{1}_Y = X^{r+1}\mathbf{1}_Y,
\qquad\qquad
T_\pi X^r\mathbf{1}_Y = q^{\frac{r}{2}}X^{-r}\mathbf{1}_Y
\qquad\hbox{and}
$$
$$
T_1X^r\mathbf{1}_Y = t^{\frac12}X^{-r}\mathbf{1}_Y + (t^{\frac12}-t^{-\frac12})\frac{X^r-X^{-r}}{1-X^2}\mathbf{1}_Y,
\qquad\hbox{for $r\in \ZZ$.}
$$
The electronic Macdonald polynomials are the elements $E_m(X)\in \CC[X,X^{-1}]$, $m\in \ZZ$, determined by
\begin{align}
YE_m(X)\mathbf{1}_Y &= t^{-\frac12} q^{\frac{-m}{2}}E_m(X)\mathbf{1}_Y,\quad\hbox{if $m\in \ZZ_{>0}$, and} 
\nonumber \\
YE_{-m}(X)\mathbf{1}_Y &= t^{\frac12} q^{\frac{m}{2}} E_{-m}(X)\mathbf{1}_Y,\quad\hbox{if $m\in \ZZ_{\ge0}$,}
\label{Eeig}
\end{align}
with normalization such that the coefficient of $X^m$ in $E_m(X)$ is 1.
The electronic Macdonald polynomials are given recursively (see \cite[(6.2.3)]{Mac03}) by
$E_0(X) = 1$ and $E_1(X) = X$ and
\begin{align}
\tau^\vee_1 E_r(X)\mathbf{1}_Y &= t^{-\frac12} E_{-r}(X)\mathbf{1}_Y,
&\tau^\vee_1 E_{-r}(X)\mathbf{1}_Y &= t^{-\frac12}\frac{(1-tY^2)(1-tY^{-2})}{(1-Y^2)(1-Y^{-2})} E_r(X)\mathbf{1}_Y,
\nonumber \\
\tau_\pi^\vee E_r(X)\mathbf{1}_Y &= t^{-\frac12}E_{-(r-1)}(X)\mathbf{1}_Y.
&\tau_\pi^\vee E_{-r}(X)\mathbf{1}_Y &= t^{\frac12} E_{r+1}(X)\mathbf{1}_Y,
\label{tauonE}
\end{align}
for $r\in \ZZ_{>0}$.
Note that $\tau^\vee_\pi E_0(X)\mathbf{1}_Y = XT_1 \mathbf{1}_Y = t^{\frac12} X\mathbf{1}_Y
=t^{\frac12} E_1(X)\mathbf{1}_Y$ and
$$\tau^\vee_\pi E_1(X)\mathbf{1}_Y = XT_1X\mathbf{1}_Y = t^{-\frac12}XT_1XT_1\mathbf{1}_Y
=t^{-\frac12}XX^{-1}\mathbf{1}_Y = t^{-\frac12}E_0(X)\mathbf{1}_Y$$
and
\begin{equation}
\tau^\vee_1E_0(X)\mathbf{1}_Y
=\Big(T_1+t^{-\frac12}\frac{(1-t)}{(1-Y^{-2})}\Big)\mathbf{1}_Y
=\Big(t^{\frac12}+t^{-\frac12}\frac{(1-t)}{(1-t^{-1})}\Big)\mathbf{1}_Y
=(t^{\frac12}-t^{\frac12})\mathbf{1}_Y = 0.
\label{tauonE0}
\end{equation}
As pictured below, the elements $\tau^\vee_1$ and $\tau^\vee_\pi$ can be used to recursively construct the electronic Macdonald polynomials.
$$
\begin{tikzpicture}[scale=0.8]
\tikzstyle{every node}=[font=\small]
\node at (-6, 0) {$\bullet$};  
    \filldraw (-6, 0) node[anchor=north,yshift=-0.1cm] {$E_{-3}$};
\node at (-4, 0) {$\bullet$};  
    \filldraw (-4, 0) node[anchor=north,yshift=-0.1cm] {$E_{-2}$};
	\draw[-latex,  bend right=30, thick, red] (-4, 0) to node[below,xshift=2cm,yshift=0.3cm]{$t^{-\frac12}\tau^\vee_\pi$} (6, 0);
\node at (-2, 0) {$\bullet$};  
    \filldraw (-2, 0) node[anchor=north,yshift=-0.1cm] {$E_{-1}$};
	\draw[-latex,  bend right=30, thick, red] (-2, 0) to node[below,xshift=1cm,yshift=0.15cm]{$t^{-\frac12}\tau^\vee_\pi$} (4, 0);
\node at (0, 0) {$\bullet$};  
    \filldraw (0, 0) node[anchor=north,yshift=-0.1cm] {$E_0$};
	\draw[-latex,  bend right=30, thick, red] (0, 0) to node[below,xshift=0.1cm,yshift=0.1cm]{$t^{-\frac12}\tau^\vee_\pi$} (2, 0);
\node at (2, 0) {$\bullet$};  
    \filldraw (2, 0) node[anchor=north,yshift=-0.1cm] {$E_1$};
	\draw[-latex,  bend right=30, thick, blue] (2, 0) to node[above,xshift=-0.3cm,yshift=-0.1cm]{$t^{\frac12}\tau^\vee_1$} (-2, 0);
\node at (4, 0) {$\bullet$};  
    \filldraw (4, 0) node[anchor=north,yshift=-0.1cm] {$E_2$};
	\draw[-latex,  bend right=30, thick, blue] (4, 0) to node[above,xshift=-1.3cm,yshift=-0.2cm]{$t^{\frac12}\tau^\vee_1$} (-4, 0);
\node at (6, 0) {$\bullet$};  
    \filldraw (6, 0) node[anchor=north,yshift=-0.1cm] {$E_3$};
	\draw[-latex,  bend right=30, thick, blue] (6, 0) to node[above,xshift=-2.5cm,yshift=-0.3cm]{$t^{\frac12}\tau^\vee_1$}  (-6,0);
\end{tikzpicture}
$$
The bosonic Macdonald polynomials $P_m(X)\in \CC[X,X^{-1}]$, for $m\in \ZZ_{\ge 0}$,
can be given \cite[(6.3.10)]{Mac03} by
\begin{equation}
P_m(X)\mathbf{1}_Y = E_{-m}(X)\mathbf{1}_Y + \frac{t(1-q^m)}{(1-tq^m)} E_m(X)\mathbf{1}_Y.
\label{Pdefn}
\end{equation}
Applying \eqref{Eeig} to \eqref{Pdefn} and using 
$\tau^\vee_1 E_r(X)\mathbf{1}_Y = t^{-\frac12} E_{-r}(X)\mathbf{1}_Y$
gives, for $m\in \ZZ_{>0}$,
\begin{equation}
P_m(X)\mathbf{1}_Y 
=t^{\frac12}\tau^\vee_1 E_m(X)\mathbf{1}_Y + t\frac{(1-t^{-1}Y^{-2})}{(1-Y^{-2})} E_m(X)\mathbf{1}_Y 
= t^{\frac12} \mathbf{1}_0 E_m(X)\mathbf{1}_Y,
\label{creationP}
\end{equation}
where the last equality follows from \eqref{10intau}.

The following Proposition analyzes the action of $\eta$ and $\eta^{-1}$ as operators on the polynomial
representation.  It shows that $\eta$ acts as a raising operator with $\eta E_0(X)\mathbf{1}_Y = 0$
and that $\eta^{-1}$ acts as a lowering operator with $\eta^{-1}E_1(X)\mathbf{1}_Y = 0$.
The operator $\eta$ is pictured in blue and the operator $\eta^{-1}$ is pictured in red.
The coefficients below and above the arrows provide the constants which appear in the formulas
for $\eta E_m$, $\eta_{E_m}$, $\eta^{-1}E_m$ and $\eta^{-1}E_m$ which are derived in 
\eqref{etaEm}, \eqref{etaEmm}, \eqref{etainvEm} and \eqref{etainvEmm}.
\begin{equation}
\begin{matrix}
\begin{tikzpicture}[scale=0.8]
\tikzstyle{every node}=[font=\small]
	\draw[-latex,  bend left=30, thick, blue] (-6.8, 0.25) to node[above]{}  (-6, 0);
	\draw[-latex,  bend left=30, thick, red] (-6, 0) to node[below]{} (-6.8,-0.25);
\node at (-6, 0) {$\bullet$};  
    \filldraw (-6, 0) node[anchor=north,yshift=-0.1cm] {$E_{-3}$};
	\draw[-latex,  bend left=30, thick, blue] (-6, 0) to node[above]{$\frac{t^{\frac12}(1-q^3)}{1-tq^3}$} (-4, 0);
	\draw[-latex,  bend left=30, thick, red] (-4, 0) to node[below,yshift=-0.2cm]{$\frac{t^{-\frac12}(1-tq^3)}{1-q^3}$} (-6, 0);
\node at (-4, 0) {$\bullet$};  
    \filldraw (-4, 0) node[anchor=north,yshift=-0.1cm] {$E_{-2}$};
	\draw[-latex,  bend left=30, thick, blue] (-4, 0) to node[above]{$\frac{t^{\frac12}(1-q^2)}{1-tq^2}$} (-2, 0);
	\draw[-latex,  bend left=30, thick, red] (-2, 0) to node[below,yshift=-0.2cm]{$\frac{t^{-\frac12}(1-tq^2)}{1-q^2}$} (-4, 0);
\node at (-2, 0) {$\bullet$};  
    \filldraw (-2, 0) node[anchor=north,yshift=-0.1cm] {$E_{-1}$};
	\draw[-latex,  bend left=30, thick, blue] (-2, 0) to node[above]{$\frac{t^{\frac12}(1-q)}{1-tq}$} (0, 0);
	\draw[-latex,  bend left=30, thick, red] (0, 0) to node[below,yshift=-0.2cm]{$\frac{t^{-\frac12}(1-tq)}{1-q}$} (-2, 0);
\node at (0, 0) {$\bullet$};  
    \filldraw (0, 0) node[anchor=north,yshift=-0.1cm] {$E_0$};
	\draw[-latex,  bend right=30, thick, blue] (0, 0) to node[above,xshift=0.1cm,yshift=0.3cm]{$0$} (0.5, 0.8);
	\draw[-latex,  bend right=30, thick, red] (2, 0) to node[below,xshift=-0.1cm,yshift=-0.3cm]{$0$} (1.5, -0.8);
\node at (2, 0) {$\bullet$};  
    \filldraw (2, 0) node[anchor=north,yshift=-0.1cm] {$E_1$};
	\draw[-latex,  bend left=30, thick, blue] (2, 0) to node[above]{$\frac{t^{-\frac12}(1-tq)}{1-q}$} (4, 0);
	\draw[-latex,  bend left=30, thick, red] (4, 0) to node[below,yshift=-0.2cm]{$\frac{t^{\frac12}(1-q)}{1-tq}$} (2, 0);
\node at (4, 0) {$\bullet$};  
    \filldraw (4, 0) node[anchor=north,yshift=-0.1cm] {$E_2$};
	\draw[-latex,  bend left=30, thick, blue] (4, 0) to node[above]{$\frac{t^{-\frac12}(1-tq^2)}{1-q^2}$} (6, 0);
	\draw[-latex,  bend left=30, thick, red] (6, 0) to node[below,yshift=-0.2cm]{$\frac{t^{\frac12}(1-q^2)}{1-tq^2}$} (4, 0);
\node at (6, 0) {$\bullet$};  
    \filldraw (6, 0) node[anchor=north,yshift=-0.1cm] {$E_3$};
	\draw[-latex,  bend left=30, thick, blue] (6, 0) to node[above]{}  (6.8, 0.25);
	\draw[-latex,  bend left=30, thick, red] (6.8, -0.25) to node[below]{} (6, 0);
\end{tikzpicture}
\end{matrix}
\label{etaonpolypic}
\end{equation}
It is important to note that $\eta$ and $\eta^{-1}$ are not invertible as operators on the
polynomial representation
(even though they are inverses of each other as elements of $\tilde H$).  This phenomenon is of the same
nature as the fact that $(1-t^{-1}Y)$ is a well defined element of $\tilde H$ with inverse
$\frac{1}{(1-t^{-1}Y)}$ in $\tilde H$, and $(1-t^{-1}Y)$ is a well defined operator on
$\CC[X,X^{-1}]$ that is not invertible as an operator on the polynomial representation $\CC[X,X^{-1}]$. 

The identities
\begin{align}
\eta^{-(\ell-j)}\eta^j  E_m(X)\mathbf{1}_Y
&=t^{\frac12(\ell-2j)}\cdot \ev_m\Big( 
\frac{(t^{-1}Y^{-2}q^{-(\ell-2j)};q)_{\ell-j} }{ (Y^{-2}q^{-(\ell-2j)};q)_{\ell-j} }\cdot
\frac{(Y^{-2};q)_j }{(t^{-1}Y^{-2};q)_j }\Big) E_{m-(\ell-2j)}(X)\mathbf{1}_Y, 
\label{normpB}
\\
\eta^j \eta^{-(\ell-j)} E_{-m}(X)\mathbf{1}_Y
&= t^{-\frac12(\ell-2j)} \cdot
\ev_m\Big(
\frac{(t^{-1}Y^{-2}q^{\ell-2j+1};q)_j }{(Y^{-2}q^{\ell-2j+1};q)_j }
\frac{(Y^{-2}q;q)_{\ell-j} }{ (t^{-1}Y^{-2}q;q)_{\ell-j} } \Big) E_{-m-(\ell-2j)}(X)\mathbf{1}_Y.
\label{normnB}
\end{align}
follow from \eqref{normn} and \eqref{normp} of the following Proposition by replacing $j$ with $\ell-j$ (we keep the same
conditions on $j$ and $\ell$ as in Proposition \ref{etaonEnew}).  They will be used in the 
proof of Theorem \ref{finalthm}.

\begin{prop} \label{etaonEnew}
As in \eqref{etadefn} and \eqref{etainvdefn}, let
$$\eta 
= t^{\frac12} \frac{(1-Y^{2}q)}{(1-tY^{2}q)}\tau^\vee_\pi \tau^\vee_1
\qquad\hbox{and}\qquad
\eta^{-1} = t^{\frac12} \frac{(1-Y^{-2})}{(1-tY^{-2})} \tau^\vee_1\tau^\vee_\pi.$$
Let $\ev_m\colon \CC[Y,Y^{-1}]\to \CC$ be the homomorphism given by $\ev_m(Y) = t^{-\frac12} q^{-\frac12 m}$
and extend $\ev_m$ to elements of $\CC(Y)$ such that the denominator does not evaluate to $0$. 

\smallskip\noindent
If $\ell\in \ZZ_{\ge 0}$ and $m\in \ZZ_{>0}$ and $j\in \{0,\ldots, \ell\}$ then
\begin{align}
\eta^{-j}\eta^{\ell-j} E_m(X)\mathbf{1}_Y
&=t^{-\frac12(\ell-2j)}\cdot \ev_m\Big( 
\frac{(t^{-1}Y^{-2}q^{\ell-2j};q)_j}{(Y^{-2}q^{\ell-2j};q)_j}\cdot
\frac{(Y^{-2};q)_{\ell-j} }{(t^{-1}Y^{-2};q)_{\ell-j} }\Big) E_{m+\ell-2j}(X)\mathbf{1}_Y, 
\label{normp}
\\
\eta^{\ell-j}\eta^{-j} E_{-m}(X)\mathbf{1}_Y
&= t^{\frac12(\ell-2j)} \cdot
\ev_m\Big(
\frac{(t^{-1}Y^{-2}q^{-(\ell-2j)+1};q)_{\ell-j} }{(Y^{-2}q^{-(\ell-2j)+1};q)_{\ell-j} }
\frac{(Y^{-2}q;q)_j }{(t^{-1}Y^{-2}q;q)_j} \Big) E_{-m+\ell-2j}(X)\mathbf{1}_Y.
\label{normn}
\end{align}
\end{prop}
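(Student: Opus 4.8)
The plan is to reduce both identities to four single-step formulas for the action of $\eta$ and $\eta^{-1}$ on the eigenbasis $\{E_m(X)\mathbf{1}_Y\}$, and then to compose them, using the $Y$-eigenproperty \eqref{Eeig} to turn rational prefactors into $\ev_m$-scalars.

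First I would establish the single-step formulas. For $m\in\ZZ_{>0}$, the relations \eqref{tauonE} give $\tau^\vee_1 E_m\mathbf{1}_Y = t^{-\frac12}E_{-m}\mathbf{1}_Y$ and then $\tau^\vee_\pi E_{-m}\mathbf{1}_Y = t^{\frac12}E_{m+1}\mathbf{1}_Y$, so $\tau^\vee_\pi\tau^\vee_1 E_m\mathbf{1}_Y = E_{m+1}\mathbf{1}_Y$; since the leftover rational prefactor in \eqref{etadefn} acts on the $Y$-eigenvector $E_{m+1}\mathbf{1}_Y$ by substituting its eigenvalue, this yields $\eta E_m\mathbf{1}_Y = t^{-\frac12}\ev_m\big(\tfrac{1-Y^{-2}}{1-t^{-1}Y^{-2}}\big)E_{m+1}\mathbf{1}_Y$. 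The same bookkeeping produces $\eta^{-1}E_m\mathbf{1}_Y = t^{\frac12}\ev_m\big(\tfrac{1-q^{-1}t^{-1}Y^{-2}}{1-q^{-1}Y^{-2}}\big)E_{m-1}\mathbf{1}_Y$, together with the negative-side pair $\eta E_{-m}\mathbf{1}_Y = t^{\frac12}\ev_m\big(\tfrac{1-t^{-1}Y^{-2}}{1-Y^{-2}}\big)E_{-(m-1)}\mathbf{1}_Y$ and $\eta^{-1}E_{-m}\mathbf{1}_Y = t^{-\frac12}\ev_m\big(\tfrac{1-qY^{-2}}{1-qt^{-1}Y^{-2}}\big)E_{-(m+1)}\mathbf{1}_Y$. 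Each of these is exactly the corresponding coefficient in \eqref{etaonpolypic}, rewritten so that the scalar is an $\ev_m$ of a rational function of $Y^{-2}$.

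Then I would compose. For \eqref{normp} I apply $\eta^{\ell-j}$ and then $\eta^{-j}$ to $E_m\mathbf{1}_Y$ with $m>0$; since raising keeps us on the positive side, the output is $E_{m+\ell-2j}\mathbf{1}_Y$ times the product of the single-step scalars. The key device is the elementary shift $\ev_{m+i}(g(Y)) = \ev_m(g(q^{-i/2}Y))$ (equivalently $\ev_n(g(Y))=\ev_m(g(q^{(m-n)/2}Y))$), which follows at once from $\ev_m(Y)=t^{-\frac12}q^{-m/2}$ and is the shadow of the commutation $\eta Y = Yq^{\frac12}\eta$ of \eqref{etapastY}. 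Using it, the $\ell-j$ raising factors telescope into $\ev_m\big(\tfrac{(Y^{-2};q)_{\ell-j}}{(t^{-1}Y^{-2};q)_{\ell-j}}\big)$ and the $j$ lowering factors into $\ev_m\big(\tfrac{(t^{-1}Y^{-2}q^{\ell-2j};q)_j}{(Y^{-2}q^{\ell-2j};q)_j}\big)$, while the powers of $t$ combine to $t^{-\frac12(\ell-2j)}$, which is precisely the right-hand side of \eqref{normp}. Identity \eqref{normn} is obtained by the mirror-image computation on the negative side (lower $j$ times, then raise $\ell-j$ times): the same shift turns the lowering factors into $\ev_m\big(\tfrac{(Y^{-2}q;q)_j}{(t^{-1}Y^{-2}q;q)_j}\big)$ and the raising factors into $\ev_m\big(\tfrac{(t^{-1}Y^{-2}q^{-(\ell-2j)+1};q)_{\ell-j}}{(Y^{-2}q^{-(\ell-2j)+1};q)_{\ell-j}}\big)$, with residual $t$-power $t^{\frac12(\ell-2j)}$.

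The one genuine subtlety, and the step I expect to require the most care, is that $\eta$ and $\eta^{-1}$ are not invertible as operators (the Warning after \eqref{etainvdefn}): the lowering chain in \eqref{normp} may try to pass through $E_1$, where $\eta^{-1}E_1\mathbf{1}_Y=0$, and the raising chain in \eqref{normn} may try to pass through $E_0$, where $\eta E_0\mathbf{1}_Y=0$. I would handle this by splitting on the sign of the target index $m+\ell-2j$ (respectively $-m+\ell-2j$). When it is $\ge 1$ every intermediate index stays strictly positive (respectively $\le 0$ keeps us on the negative side), so the telescoping product above is literally valid and both sides agree and are nonzero. When it is $\le 0$ (respectively $\ge 1$) the chain meets the forbidden node and the operator side is $0$; here one checks that the hypothesis $m\in\ZZ_{>0}$ forces the offending exponent into range, so that under $\ev_m$ the factor $(q^{m+\ell-2j};q)_j$ (respectively $(t^{-1}Y^{-2}q^{-(\ell-2j)+1};q)_{\ell-j}$) acquires a vanishing term $1-q^0=0$ and the right-hand side vanishes as well. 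Matching these two vanishings is the crux; everything else is the routine $q$-Pochhammer accounting outlined above.
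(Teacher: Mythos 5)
Your proposal is correct and follows essentially the same route as the paper's own proof: the paper likewise derives the four single-step formulas \eqref{etaEm}, \eqref{etaEmm}, \eqref{etainvEm}, \eqref{etainvEmm} from \eqref{tauonE} and \eqref{Eeig}, telescopes them into $\ev_m$-expressions of $q$-Pochhammer symbols (using exactly the eigenvalue-shift bookkeeping you describe), and disposes of the degenerate cases by observing that $\eta E_0\mathbf{1}_Y=0$ and $\eta^{-1}E_1\mathbf{1}_Y=0$ are matched by a vanishing factor $(1-q^0)$ in the numerator on the right-hand side. The only cosmetic difference is that the paper records the pure powers $\eta^{\pm\ell}$ on $E_{\pm m}$ and leaves the mixed composition $\eta^{-j}\eta^{\ell-j}$ implicit, whereas you carry out that composition explicitly.
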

%
%
\begin{proof}
Assume $m\in \ZZ_{>0}$.
By \eqref{tauonE} and \eqref{Eeig},
\begin{align}
\eta E_m(X)\mathbf{1}_Y
&=t^{\frac12} \frac{(1-Y^{2}q)}{(1-tY^{2}q)}\tau^\vee_\pi \tau^\vee_1 E_m(X)\mathbf{1}_Y
=t^{\frac12}\frac{(1-Y^{2}q)}{(1-tY^{2}q)}\tau^\vee_\pi t^{-\frac12}E_{-m}(X)\mathbf{1}_Y
\nonumber \\
&= t^{\frac12} t^{-\frac12}\frac{(1-Y^{2}q)}{(1-tY^{2}q)}t^{\frac12}E_{m+1}(X)\mathbf{1}_Y
= t^{\frac12} \frac{(1-t^{-1}q^{-(m+1)}q)}{(1-tt^{-1}q^{-(m+1)} q)} E_{m+1}(X)\mathbf{1}_Y
\nonumber \\
&= t^{\frac12} \frac{(1-t^{-1}q^{-m})}{(1-q^{-m})} E_{m+1}(X)\mathbf{1}_Y
= t^{-\frac12} \frac{(1-tq^m)}{(1-q^m)}E_{m+1}(X)\mathbf{1}_Y.
\label{etaEm}
\end{align}
Thus, for $\ell\in \ZZ_{\ge 0}$ and $m\in \ZZ_{>0}$,
\begin{align*}
\eta^\ell E_m(X)\mathbf{1}_Y 
&= t^{-\frac12\ell} \frac{(1-tq^m)(1-tq^{m+1})\cdots(1-tq^{m+\ell-1}) }{ (1-q^m)(1-q^{m+1})\cdots (1-q^{m+\ell-1}) }  E_{m+\ell}(X)\mathbf{1}_Y
\\
&= t^{-\frac12\ell} \ev_m\Big(
\frac{(Y^{-2};q)_{\ell}}{(t^{-1}Y^{-2};q)_{\ell}}\Big)  E_{m+\ell}(X)\mathbf{1}_Y.
\end{align*}

\smallskip\noindent
Assume $m\in \ZZ_{>0}$.  Using \eqref{tauonE}, \eqref{taupastY} and \eqref{Eeig},
\begin{align}
\eta E_{-m}(X)&\mathbf{1}_Y
=t^{\frac12} \frac{(1-Y^{2}q)}{(1-tY^{2}q)}\tau^\vee_\pi \tau^\vee_1 E_{-m}(X)\mathbf{1}_Y
=t^{\frac12} \frac{(1-Y^{2}q)}{(1-tY^{2}q)}\tau^\vee_\pi 
t^{-\frac12} \frac{(1-tY^2)(1-tY^{-2}) }{ (1-Y^2)(1-Y^{-2})}E_m(X)\mathbf{1}_Y
\nonumber \\
&= \frac{(1-q^{-m})(1-t^2q^m) }{ (1-t^{-1}q^{-m})(1-tq^m)}
\frac{(1-Y^{2}q)}{(1-tY^{2}q)}\tau^\vee_\pi E_m(X)\mathbf{1}_Y
\nonumber \\
&= \frac{(1-q^{-m})(1-t^2q^m) }{ (1-t^{-1}q^{-m})(1-tq^m)}
\frac{(1-Y^{2}q)}{(1-tY^{2}q)}t^{-\frac12}E_{-(m-1)}\mathbf{1}_Y
\nonumber \\
&= \frac{(1-q^{-m})(1-t^2q^m) }{ (1-t^{-1}q^{-m})(1-tq^m)}
\frac{(1-tq^m)}{(1-t^2q^m)}t^{-\frac12}E_{-(m-1)}\mathbf{1}_Y
=t^{\frac12}\frac{(1-q^m)}{(1-tq^m)}E_{-m+1}(X)\mathbf{1}_Y.
\label{etaEmm}
\end{align}
By \eqref{tauonE0}, 
$$\eta E_0(X)\mathbf{1}_Y = t^{\frac12}\frac{(1-Y^2q)}{(1-tY^2q)}\tau^\vee_\pi \tau^\vee_1 E_0(X)\mathbf{1}_Y=0
=t^{\frac12}\frac{(1-q^0)}{(1-tq^0)}E_{-0+1}(X)\mathbf{1}_Y.$$
Thus, for $\ell\in \ZZ_{\ge 0}$ and $m\in \ZZ_{\ge 0}$,
\begin{align*}
\eta^\ell E_{-m}(X)\mathbf{1}_Y 
&= t^{\frac12\ell }\frac{(1-q^{m-\ell+1..m})}{(1-tq^{m-\ell+1..m})} E_{-m+\ell}(X)\mathbf{1}_Y
\\
&= t^{\frac12\ell }\ev_m\Big(
\frac{(1-q^{m-\ell+1})\cdots (1-q^{m-1})(1-q^m)) }{( 1-tq^{m-\ell+1})\cdots (1-tq^{m-1})(1-tq^m)}\Big) E_{-m+\ell}(X)\mathbf{1}_Y
\\
&= t^{\frac12\ell }\ev_m\Big(\frac{(t^{-1}Y^{-2}q^{-(\ell-1)};q)_\ell}{(Y^{-2}q^{-(\ell-1)};q)_\ell}\Big)E_{-m+\ell}(X)\mathbf{1}_Y,
\end{align*}
where the right hand side evaluates to $0$ if $\ell>m$ 
(because the denominator factors are all nonzero and the numerator contains a factor of $(1-q^0)=1-1=0$).

\smallskip
Assume $m\in \ZZ_{>0}$.
By \eqref{tauonE} and \eqref{Eeig},
\begin{align}
\eta^{-1} E_m(X)\mathbf{1}_Y
&=t^{\frac12} \frac{(1-Y^{-2})}{(1-tY^{-2})} \tau^\vee_1 \tau^\vee_\pi E_m(X)\mathbf{1}_Y
=t^{\frac12} \frac{(1-Y^{-2})}{(1-tY^{-2})} \tau^\vee_1 t^{-\frac12} E_{-(m-1)}(X)\mathbf{1}_Y
\nonumber \\
&= \frac{(1-Y^{-2})}{(1-tY^{-2})} 
t^{-\frac12} \frac{(1-tY^2)(1-tY^{-2})}{(1-Y^2)(1-Y^{-2})} E_{m-1}(X)\mathbf{1}_Y
\nonumber \\
&= \frac{(1-tq^{m-1})}{(1- t^2 q^{m-1})} 
t^{-\frac12} \frac{(1-q^{-(m-1)})(1-t^2 q^{m-1})}{(1-t^{-1}q^{-(m-1)})(1-t q^{m-1})} E_{m-1}(X)\mathbf{1}_Y
\nonumber \\
&=t^{\frac12} \frac{(1-q^{m-1} ) }{(1-tq^{m-1})} E_{m-1}(X)\mathbf{1}_Y.
\label{etainvEm}
\end{align}
In particular,
$\eta^{-1} E_1(X)\mathbf{1}_Y = 0$.  Thus, for $\ell\in \ZZ_{\ge 0}$ and $m\in \ZZ_{>0}$,
\begin{align*}
\eta^{-\ell} E_m(X)\mathbf{1}_Y 
&= t^{\frac12\ell} \frac{(1-q^{m-\ell})\cdots (1-q^{m-2})(1-q^{m-1}) }{ (1-t q^{m-\ell})\cdots (1-tq^{m-2})(1-tq^{m-1})} E_{m-\ell}(X)\mathbf{1}_Y
\\
&=t^{\frac12} \ev_m\Big(\frac{(t^{-1}Y^{-2}q^{-\ell};q)_\ell}{(Y^{-2}q^{-\ell};q)_\ell}\Big) E_{m-\ell}(X)\mathbf{1}_Y,
\end{align*}
where the right hand side evaluates to 0 if $\ell\ge m$.

Assume $m\in \ZZ_{\ge0}$.
By \eqref{tauonE} and \eqref{Eeig},
\begin{align}
\eta^{-1} E_{-m}(X)\mathbf{1}_Y
&=t^{\frac12} \frac{(1-Y^{-2})}{(1-tY^{-2})} \tau^\vee_1 \tau^\vee_\pi E_{-m}(X)\mathbf{1}_Y
=t^{\frac12} \frac{(1-Y^{-2})}{(1-tY^{-2})} \tau^\vee_1 t^{\frac12} E_{m+1}(X)\mathbf{1}_Y
\nonumber \\
&=t \frac{(1-Y^{-2})}{(1-tY^{-2})} t^{-\frac12}  E_{-(m+1)}(X)\mathbf{1}_Y
= t^{\frac12} \frac{(1-t^{-1}q^{-(m+1)})}{(1-tt^{-1}q^{-(m+1)})}  E_{-(m+1)}(X)\mathbf{1}_Y
\nonumber \\
&= t^{\frac12} \frac{(1-t^{-1}q^{-m-1}) }{(1-q^{-m-1} )}  E_{-m-1}(X)\mathbf{1}_Y
=t^{-\frac12} \frac{(1-tq^{m+1})}{ (1-q^{m+1})} E_{-m-1}(X)\mathbf{1}_Y.
\label{etainvEmm}
\end{align}
Thus, for $\ell\in \ZZ_{\ge 0}$ and $m\in \ZZ_{\ge 0}$,
\begin{align*}
\eta^{-\ell} E_{-m}(X)\mathbf{1}_Y 
&= t^{-\frac12\ell} \cdot \frac{(1-t q^{m+1})(1-q^{m+2})\cdots (1-q^{m+\ell}) }{ (1-q^{m+1})(1-q^{m+2})\cdots (1-q^{m+\ell}) } 
E_{-m-\ell}(X)\mathbf{1}_Y
\\
&=t^{-\frac12\ell}\ev_m\Big( \frac{(Y^{-2}q;q)_\ell}{(t^{-1}Y^{-2}q;q)_\ell}\Big)
E_{-m-\ell}(X)\mathbf{1}_Y.
\end{align*}
\end{proof}


\subsection{Some identities in $\tilde H$}

\begin{prop} Let $\ell\in \ZZ_{>0}$.
As elements of $\tilde H_{\mathrm{int}}$, 
\begin{equation}
E_{-\ell}(X)\mathbf{1}_0 
= t^{\frac12}\Big(T^{-1}_1+t^{-\frac12}\frac{(1-t)q^\ell t}{(1-q^\ell t)}\Big)E_\ell(X)\mathbf{1}_0,
\qquad\quad
E_{\ell+1}(X)\mathbf{1}_0 = t^{-\frac12}\tau_\pi^\vee E_{-\ell}(X)\mathbf{1}_0,
\label{E10}
\end{equation}
and
\begin{equation}
P_\ell(X)\mathbf{1}_0 = t^{\frac12}\mathbf{1_0}E_\ell(X)\mathbf{1}_0.
\label{smashE}
\end{equation}
Additionally, $E_1(X)\mathbf{1}_0 = t^{-\frac12}\tau_\pi^\vee E_0(X)\mathbf{1}_0$.
\end{prop}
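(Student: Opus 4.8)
The plan is to prove the three displayed identities, together with the closing remark, by transferring the corresponding statements from the polynomial representation $\tilde H_{\mathrm{int}}\mathbf{1}_Y$ to the algebra $\tilde H_{\mathrm{int}}$, exploiting that $T_1$ acts by the same scalar on both $\mathbf{1}_0$ and $\mathbf{1}_Y$. The engine is the following reduction: if $\Psi$ is built from $T_1^{\pm1}$, from $\tau^\vee_\pi = XT_1$, from scalars, and from Laurent polynomials in $X$, then using \eqref{TpastX} to push each $T_1$ rightward past the $X$'s and then collapsing it against $\mathbf{1}_0$ via $T_1\mathbf{1}_0 = t^{\frac12}\mathbf{1}_0$ (and $T_1^{-1}\mathbf{1}_0 = t^{-\frac12}\mathbf{1}_0$, a consequence of \eqref{10rels}), one rewrites $\Psi\, E_\bullet(X)\mathbf{1}_0$ in the form $g(X)\mathbf{1}_0$ for an explicit $g\in\CC[X,X^{-1}]$. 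Performing the identical manipulation with $\mathbf{1}_0$ replaced by $\mathbf{1}_Y$ yields the \emph{same} $g$, since $T_1^{\pm1}$ has the same eigenvalue $t^{\pm\frac12}$ on $\mathbf{1}_Y$. Because $\{X^k, X^kT_1 : k\in\ZZ\}$ is a $\CC[Y,Y^{-1}]$-basis of $\tilde H_{\mathrm{int}}$, for Laurent polynomials one has $g(X)\mathbf{1}_0 = h(X)\mathbf{1}_0$ iff $g=h$, while $f\mapsto f(X)\mathbf{1}_Y$ is injective on $\CC[X,X^{-1}]$; hence each identity in $\tilde H_{\mathrm{int}}$ is equivalent to its counterpart in the polynomial representation.

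First I would record this reduction lemma precisely, then dispatch the identities. For the second identity in \eqref{E10}, writing $\tau^\vee_\pi = XT_1$ and reducing $t^{-\frac12}XT_1 E_{-\ell}(X)\mathbf{1}_0$ to $g(X)\mathbf{1}_0$, it remains to check $g = E_{\ell+1}$, which by the lemma is $g(X)\mathbf{1}_Y = E_{\ell+1}(X)\mathbf{1}_Y$, i.e. exactly $\tau^\vee_\pi E_{-\ell}(X)\mathbf{1}_Y = t^{\frac12}E_{\ell+1}(X)\mathbf{1}_Y$ from \eqref{tauonE}. For \eqref{smashE} I would reduce $t^{\frac12}\mathbf{1}_0 E_\ell(X)\mathbf{1}_0$ to $t^{\frac12}G(X)\mathbf{1}_0$ and invoke \eqref{creationP}, namely $P_\ell(X)\mathbf{1}_Y = t^{\frac12}\mathbf{1}_0 E_\ell(X)\mathbf{1}_Y = t^{\frac12}G(X)\mathbf{1}_Y$, to conclude $t^{\frac12}G = P_\ell$. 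The closing remark needs no representation input: $t^{-\frac12}\tau^\vee_\pi E_0(X)\mathbf{1}_0 = t^{-\frac12}XT_1\mathbf{1}_0 = X\mathbf{1}_0 = E_1(X)\mathbf{1}_0$.

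The first identity in \eqref{E10} is the only one requiring care: I must not use $\tau^\vee_1$ itself, since it lives in the localized $\tilde H$ rather than $\tilde H_{\mathrm{int}}$. The right-hand side is instead written with the genuine scalar $c = t^{-\frac12}\frac{(1-t)tq^\ell}{1-tq^\ell}$ and the honest element $T_1^{-1}$, so the reduction stays inside $\tilde H_{\mathrm{int}}$. To verify the reduced Laurent identity in the representation I would use \eqref{tau1Tinv} to write $\tau^\vee_1 = T_1^{-1} + t^{-\frac12}\frac{(1-t)Y^{-2}}{1-Y^{-2}}$, note via \eqref{Eeig} that $Y^{-2}E_\ell(X)\mathbf{1}_Y = tq^\ell E_\ell(X)\mathbf{1}_Y$ so that $t^{-\frac12}\frac{(1-t)Y^{-2}}{1-Y^{-2}}$ acts on $E_\ell(X)\mathbf{1}_Y$ precisely as the scalar $c$, and then apply $\tau^\vee_1 E_\ell(X)\mathbf{1}_Y = t^{-\frac12}E_{-\ell}(X)\mathbf{1}_Y$ from \eqref{tauonE}; multiplying by $t^{\frac12}$ gives $E_{-\ell}(X)\mathbf{1}_Y$, as needed.

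I expect the main obstacle to be expository rather than computational: making the transfer principle airtight, in particular justifying that the Laurent polynomial produced by the reduction is independent of whether one collapses against $\mathbf{1}_0$ or $\mathbf{1}_Y$, and flagging that $\tau^\vee_1$ must be replaced by its $T_1^{-1}$-plus-scalar form in the first identity so that every step lives in $\tilde H_{\mathrm{int}}$ and not its localization. The individual symmetrizations of $E_{\pm\ell}(X)$ under $X\mapsto X^{-1}$ coming from \eqref{TpastX} are routine and need not be displayed once the principle is in place.
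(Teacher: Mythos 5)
Your proposal is correct and is essentially the paper's own argument: your ``transfer principle'' is exactly the paper's device of writing each operator in the canonical form $Q_1(X)T_1+Q_2(X)$, collapsing $T_1$ to the scalar $t^{\frac12}$ against both $\mathbf{1}_Y$ and $\mathbf{1}_0$, and using injectivity of $f\mapsto f(X)\mathbf{1}_Y$ together with \eqref{tauonE}, \eqref{tau1Tinv}, \eqref{Eeig} and \eqref{creationP} to identify the resulting Laurent polynomial. The only cosmetic difference is that you state the reduction once as a lemma instead of repeating it in each of the three cases, and you verify the ``Additionally'' claim by the same direct computation $t^{-\frac12}XT_1\mathbf{1}_0=X\mathbf{1}_0$ that the paper leaves implicit.
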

\begin{proof} 
(a) Let $Q_1(X), Q_2(X)\in \CC[X,X^{-1}]$ be such that
$$t^{\frac12}\Big(T^{-1}_1+t^{-\frac12}\frac{(1-t)q^\ell t}{(1-q^\ell t)}\Big)E_\ell(X) = Q_1(X)T_1+Q_2(X).$$
Then, by \eqref{tauonE},
\begin{align*}
E_{-\ell}(X)\mathbf{1}_Y
&=t^{\frac12} \tau_1^\vee E_\ell(X)\mathbf{1}_Y
=t^{\frac12}\Big(T^{-1}_1+t^{-\frac12}\frac{(1-t)Y^{-2}}{(1-Y^{-2})}\Big)E_\ell(X)\mathbf{1}_Y
\\
&=t^{\frac12}\Big(T^{-1}_1+t^{-\frac12}\frac{(1-t)q^\ell t}{(1-q^\ell t)}\Big)E_\ell(X)\mathbf{1}_Y
=(Q_1(X)t^{\frac12}+Q_2(X))\mathbf{1}_Y.
\end{align*}
Since $\{X^k\mathbf{1}_Y\ |\ k\in \ZZ\}$ is a basis of $\tilde H\mathbf{1}_Y$ then
$E_{-\ell}(X) = Q_1(X)t^{\frac12}+Q_2(X).$
So
\begin{align*}
E_{-\ell}(X)\mathbf{1}_0 &= (Q_1(X)t^{\frac12}+Q_2(X))\mathbf{1}_0 \\
&=(Q_1(X)T_1+Q_2(X))\mathbf{1}_0
=t^{\frac12}\Big(T^{-1}_1+t^{-\frac12}\frac{(1-t)q^\ell t}{(1-q^\ell t)}\Big)E_\ell(X)\mathbf{1}_0.
\end{align*}
(b) Let $Q_1(X), Q_2(X)\in \CC[X,X^{-1}]$ 
such that $t^{-\frac12}\tau_\pi^\vee E_{-\ell}(X) = XT_1E_{-\ell}(X)=Q_1(X)T_1+Q_2(X).$
Then, by \eqref{tauonE},
$$E_{\ell+1}(X)\mathbf{1}_Y = t^{-\frac12} \tau_\pi^\vee E_{-\ell}(X)\mathbf{1}_Y
=(Q_1(X)T_1+Q_2(X))\mathbf{1}_Y
=(Q_1(X)t^{\frac12}+Q_2(X))\mathbf{1}_Y.
$$
Since $\{X^k\mathbf{1}_Y\ |\ k\in \ZZ\}$ is a basis of $\tilde H\mathbf{1}_Y$ then
$E_{\ell+1}(X) =(Q_1(X)t^{\frac12}+Q_2(X))$.  So
$$E_{\ell+1}(X)\mathbf{1}_0 = (Q_1(X)t^{\frac12}+Q_2(X))\mathbf{1}_0
=(Q_1(X)T_1+Q_2(X))\mathbf{1}_0 = t^{-\frac12}\tau_\pi^\vee E_{-\ell}(X)\mathbf{1}_0.$$
(c)  Let $Q_1(X), Q_2(X)$ be such that $t^{\frac12}\mathbf{1}_0 E_\ell(X) = Q_1(X)T_1+Q_2(X)$.
Then
$$P_\ell(X)\mathbf{1}_Y 
= t^{\frac12}\mathbf{1}_0E_\ell(X)\mathbf{1}_Y = (Q_1(X)T_1+Q_2(X))\mathbf{1}_Y
=(Q_1(X)t^{\frac12}+Q_2(X))\mathbf{1}_Y.
$$
So $P_\ell(X) = (Q_1(X)t^{\frac12}+Q_2(X))$ and
$$P_\ell(X)\mathbf{1}_0 = (Q_1(X)t^{\frac12}+Q_2(X))\mathbf{1}_0 = 
(Q_1(X)T_1+Q_2(X))\mathbf{1}_0 = t^{\frac12}\mathbf{1}_0 E_\ell(X)\mathbf{1}_0.
$$
\end{proof}

\begin{prop}
Let
$$c(Y) =  \frac{(1-tY^2)}{(1-Y^2)}\qquad\hbox{and}\qquad
F_\ell(Y) = \frac{(1-t)}{(1-t q^\ell)}\frac{(1-t Y^2 q^\ell)}{(1-Y^2)},\quad\hbox{for $\ell\in \ZZ_{>0}$.}
$$
Then, as elements of $\tilde H$,
\begin{equation}
E_{-\ell}(X)\mathbf{1}_0 = (\eta_{s_1}c(Y)+F_\ell(Y))E_\ell(X) \mathbf{1}_0
\quad\hbox{and}\quad
E_{\ell+1}(X)\mathbf{1}_0 = t^{-\frac12}\eta_\pi E_{-\ell}(X)\mathbf{1}_0.
\label{etaonE10}
\end{equation}
\end{prop}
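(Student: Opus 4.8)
The plan is to deduce both identities from the $\tilde H_{\mathrm{int}}$-identities \eqref{E10} already established in the previous proposition, translating the $T_1$-expressions there into the normalized-intertwiner language of \eqref{nintdefn}. The second identity is then immediate: since $\eta_\pi = \tau^\vee_\pi$ by definition \eqref{nintdefn}, the assertion $E_{\ell+1}(X)\mathbf{1}_0 = t^{-\frac12}\eta_\pi E_{-\ell}(X)\mathbf{1}_0$ is literally the second equation of \eqref{E10}. So the real content is the first identity, and for it it suffices, by the first equation of \eqref{E10}, to show that $(\eta_{s_1}c(Y)+F_\ell(Y))E_\ell(X)\mathbf{1}_0$ equals $t^{\frac12}\big(T^{-1}_1+t^{-\frac12}\frac{(1-t)q^\ell t}{(1-q^\ell t)}\big)E_\ell(X)\mathbf{1}_0$.

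First I would simplify the element $\eta_{s_1}c(Y)$ of $\tilde H$. Using $\eta_{s_1} = t^{\frac12}\frac{(1-Y^{-2})}{(1-tY^{-2})}\tau^\vee_1$ from \eqref{nintdefn} together with the commutation $\tau^\vee_1 Y = Y^{-1}\tau^\vee_1$ from \eqref{taupastY}, so that $\tau^\vee_1 f(Y) = f(Y^{-1})\tau^\vee_1$ for every $f\in\CC(Y)$, I push $c(Y)=\frac{(1-tY^2)}{(1-Y^2)}$ through $\tau^\vee_1$, turning it into $c(Y^{-1})=\frac{(1-tY^{-2})}{(1-Y^{-2})}$. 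This cancels the prefactor of $\eta_{s_1}$ exactly, yielding the clean relation $\eta_{s_1}c(Y) = t^{\frac12}\tau^\vee_1$. I would then rewrite $t^{\frac12}\tau^\vee_1$ using \eqref{tau1Tinv} as $t^{\frac12}T_1^{-1}+\frac{(1-t)Y^{-2}}{(1-Y^{-2})}$. With these substitutions the left-hand side becomes $\big(t^{\frac12}T_1^{-1}+\frac{(1-t)Y^{-2}}{(1-Y^{-2})}+F_\ell(Y)\big)E_\ell(X)\mathbf{1}_0$, while the target reads $\big(t^{\frac12}T_1^{-1}+\frac{(1-t)tq^\ell}{(1-tq^\ell)}\big)E_\ell(X)\mathbf{1}_0$, so it remains only to verify the purely rational identity in $Y$
$$\frac{(1-t)Y^{-2}}{(1-Y^{-2})} + F_\ell(Y) = \frac{(1-t)tq^\ell}{1-tq^\ell}.$$
I would check this by writing $\frac{Y^{-2}}{1-Y^{-2}}=\frac{-1}{1-Y^2}$, factoring out $\frac{1-t}{1-Y^2}$, and observing that the bracketed remainder collapses to $\frac{tq^\ell(1-Y^2)}{1-tq^\ell}$, which cancels the $(1-Y^2)$ denominator and leaves the desired constant.

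The only point requiring genuine care — and the conceptual heart of the computation — is that the $Y$-dependence must cancel identically in $\tilde H$: the definition of $F_\ell(Y)$ is precisely engineered so that $\frac{(1-t)Y^{-2}}{(1-Y^{-2})}+F_\ell(Y)$ becomes a scalar, matching the coefficient of $T_1^{-1}$ in \eqref{E10}. This is what lets us convert the $\tilde H_{\mathrm{int}}$ statement into the intertwiner form without ever having to evaluate $Y$ on $E_\ell(X)\mathbf{1}_0$. Beyond this cancellation, everything reduces to the single commutation relation \eqref{taupastY} and routine manipulation of rational functions, so I expect no serious obstacle.
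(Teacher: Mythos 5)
Your proposal is correct and is essentially the paper's own argument: the second identity is read off from the second equation of \eqref{E10} since $\eta_\pi=\tau^\vee_\pi$, and the first follows from the first equation of \eqref{E10} by translating between $T_1^{-1}$, $\tau^\vee_1$ and $\eta_{s_1}$ via \eqref{tau1Tinv} and \eqref{nintdefn} and collapsing the $Y$-rational functions. The only difference is direction and packaging — you start from $\eta_{s_1}c(Y)$ and simplify it to $t^{\frac12}\tau^\vee_1$ before matching the scalar, whereas the paper starts from the $T_1^{-1}$ expression and rewrites it into $\eta_{s_1}c(Y)+F_\ell(Y)$ — which is the same computation run in reverse.
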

\begin{proof}  Using the first identity in \eqref{E10}, \eqref{tau1Tinv} and \eqref{nintdefn},
\begin{align*}
E_{-\ell}(X)\mathbf{1}_0 
&= t^{\frac12} \Big(T^{-1}_1+t^{-\frac12}\frac{(1-t)q^\ell t}{(1-q^\ell t)}\Big)E_\ell(X)\mathbf{1}_0
\\
&= t^{\frac12} 
\Big(\tau^\vee_1 - t^{-\frac12}\frac{(1-t)Y^{-2}}{(1-Y^{-2})}+t^{-\frac12}\frac{(1-t)q^\ell t}{(1-q^\ell t)}\Big)E_\ell(X)\mathbf{1}_0
\\
&= t^{\frac12} \Big(t^{-\frac12}\frac{(1-tY^{-2})}{(1-Y^{-2})}\eta_{s_1} 
+ t^{-\frac12}\frac{(1-t)(-Y^{-2}+q^\ell t Y^{-2}+q^\ell t - q^\ell t Y^{-2})} {(1-Y^{-2})(1-q^\ell t)}\Big)E_\ell(X)\mathbf{1}_0
\\
&= t^{\frac12} \Big(t^{-\frac12}\eta_{s_1}\frac{(1-tY^{2})}{(1-Y^{2})} 
+ t^{-\frac12}\frac{(1-t)(-1+q^\ell t Y^2)} {(Y^2-1)(1-q^\ell t)}\Big)E_\ell(X)\mathbf{1}_0
\\
&= \Big(\eta_{s_1}\frac{(1-tY^{2})}{(1-Y^{2})} 
+ \frac{(1-t)(1-t Y^2 q^\ell )} {(1-t q^\ell)(1-Y^2)}\Big)E_\ell(X)\mathbf{1}_0,
\end{align*}
where the next to last equality follows from the second identity in \eqref{etapastY}.
Then, by \eqref{nintdefn}, the second identity in \eqref{E10} and \eqref{etadefn},
$$E_{\ell+1}(X)\mathbf{1}_0 = t^{-\frac12}\eta_\pi E_{-\ell}(X)\mathbf{1}_0
= t^{-\frac12}\Big(\eta \frac{(1-tY^{2})}{(1-Y^{2})} 
+ \eta_\pi \frac{(1-t)(1-t Y^2 q^\ell )} {(1-t q^\ell)(1-Y^2)}\Big)E_\ell(X)\mathbf{1}_0.
$$
\end{proof}

\section{Operator expansions}

Let $\CC(Y)$ be the field of fractions of $\CC[Y,Y^{-1}]$.
As indicated in Section \ref{DAHAdefn}, as a left $\CC(Y)$-module, the localised double affine Hecke algebra 
$$\hbox{$\tilde H$\quad has $\CC(Y)$-basis}\qquad
\{ X^k\ |\ k\in \ZZ\} \cup \{ X^kT_1\ |\ k\in \ZZ\}.$$
Then $\tilde{H}\mathbf{1}_0$ is a $\CC(Y)$-subspace of $\tilde H$ and
$$\hbox{$\tilde H\mathbf{1}_0$\quad has $\CC(Y)$-basis}\qquad
\{ X^k\mathbf{1}_0\ |\ k\in \ZZ\},$$
since, by the first relation in \eqref{10rels}, $T_1\mathbf{1}_0 = t^{\frac12}\mathbf{1}_0$.
The sets
$$\{ E_k(X)\mathbf{1}_0\ |\ k\in \ZZ\}
\qquad\hbox{and}\qquad
\{ \eta^k(X)\mathbf{1}_0\ |\ k\in \ZZ\}
\qquad\hbox{are also $\CC(Y)$-bases of $\tilde H \mathbf{1}_0$,}
$$
and the results of this section and the next provide explicit product formulas for the transition coefficients between these bases.

\subsection{Definition of $D_j^{(\ell)}(Y)$ and $K_j^{(\ell)}(Y)$}

Define functions $D_j^{(\ell-1)}(Y)$ for $\ell\in \ZZ_{>0}$ and $D_j^{(-\ell)}(Y)$ for $\ell\in \ZZ_{\ge 0}$ by the expansions
\begin{equation}
E_\ell(X)\mathbf{1}_0
= \sum_{j=0}^{\ell-1} \eta^{\ell-2j} D^{(\ell-1)}_j(Y)\mathbf{1}_0
\qquad\hbox{and}\qquad
E_{-\ell}(X)\mathbf{1}_0
=\sum_{j=0}^\ell \eta^{-\ell+2j} D_j^{(-\ell)}(Y)\mathbf{1}_0.
\label{Ddefn}
\end{equation}
Define $K_j^{(\ell)}(Y)$ for $\ell\in \ZZ_{\ge 0}$ and $j\in \{0,1, \ldots, \ell\}$ by 
\begin{equation}
\mathbf{1}_0 E_\ell(X)\mathbf{1}_0 = \sum_{j=0}^\ell \mathbf{1}_0 \eta^{\ell-2j}K_j^{(\ell)}(Y).
\label{Kdefn}
\end{equation}
For example,
$$
\begin{array}{rrrrrrrrrrr}
E_{3}(X)\mathbf{1}_0 = 
&\eta^3 D_0^{(2)}(Y)\mathbf{1}_0
&+\eta D_1^{(2)}(Y)\mathbf{1}_0
&+\eta^{-1}D_2^{(2)}(Y)\mathbf{1}_0,
\\
\\
E_{-3}(X)\mathbf{1}_0 = 
&\eta^3 D_3^{(-3)}(Y)\mathbf{1}_0
&+\eta D_2^{(-3)}(Y)\mathbf{1}_0
&+\eta^{-1}D_1^{(-3)}(Y)\mathbf{1}_0
&+\eta^{-3}D_0^{(-3)}(Y)\mathbf{1}_0,
\end{array}
$$
and
$$\mathbf{1}_0 E_3(X)\mathbf{1}_0 
= \mathbf{1}_0 \eta^3 K_0^{(3)}(Y) + \mathbf{1}_0\eta K_1^{(3)}(Y)
+\mathbf{1}_0\eta^{-1} K_2^{(3)}(Y)+\mathbf{1}_0\eta^{-3}K_3^{(3)} (Y).
$$
See Section \ref{DKSL2Ex} for examples of the first few of the functions $D_j^{(\ell)}(Y)$ and
$K_j^{(\ell)}(Y)$.
Proposition \ref{KfromD} below
provides a formula for the $K_j^{(\ell)}(Y)$ in terms of the $D_j^{(\ell)}(Y)$.

\subsection{A recursion for the $D_j^{(\ell)}(Y)$}

The following Proposition 
provides recursions determining the $D_j^{(\ell)}(Y)$,
showing that the $D_j^{(\ell)}$ for $\ell\in \ZZ_{\ge 0}$ and $j\in \{0, \ldots, \ell-1\}$ form 
something like a Pascal triangle,
$$
\begin{array}{cccccccc}
&&&D_0^{(0)} \\
&&D_0^{(1)}  &&D_1^{(1)} \\
&D_0^{(2)}  &&D_1^{(2)} &&D_2^{(2)} \\
D_0^{(3)}  &&D_1^{(3)} &&D_2^{(3)} &&D_3^{(3)} 
\end{array}
$$


\begin{prop} \label{Drecursion} Let $D_j^{(\ell)}(Y)$ and $D_j^{(-\ell)}(Y)$ be as defined in \eqref{Ddefn}.
\item[(a)] If $\ell\in \ZZ_{>0}$ and $j\in \{0, \ldots, \ell\}$ then
$D_j^{(-\ell)}(Y) = t^{\frac12} D^{(\ell)}_j(Y^{-1}).$ 
\item[(b)]  
The $D_j^{(\ell)}(Y)$ satisfy, and are determined by, $D_0^{(0)}(Y)=t^{-\frac12}$ and the recursion
$$
D_0^{(\ell)} = t^{\frac12} \frac{(1-t^{-1}Y^{-2} q^{\ell})}{(1-Y^{-2} q^{\ell})}D_0^{(\ell-1)}(Y),
\qquad
D_\ell^{(\ell)} = t^{-\frac12} \frac{(1-t)(1-tY^{-2})}{(1-tq^{\ell})(1-Y^{-2}q^{-\ell})}D_0^{(\ell-1)}(Y^{-1}),
$$
\begin{equation}
D_j^{(\ell)}(Y) = t^{\frac12} \frac{(1-t^{-1}Y^{-2} q^{\ell-2j})}{(1-Y^{-2} q^{\ell-2j})}D_j^{(\ell-1)}(Y)
+t^{-\frac12} \frac{(1-t)(1-q^\ell tY^{-2}q^{\ell-2j})}{(1-tq^{\ell})(1-Y^{-2}q^{\ell-2j})}D_{\ell-j}^{(\ell-1)}(Y^{-1}).
\label{Drec}
\end{equation}
\end{prop}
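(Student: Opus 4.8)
The plan is to extract both statements directly from the two identities in \eqref{etaonE10}, using only the rules for moving $\eta$, $\eta_{s_1}$, $\eta_\pi$ past functions of $Y$ and past $\mathbf{1}_0$. Concretely I will use: from \eqref{etapastY}, $\eta f(Y) = f(q^{\frac12}Y)\eta$, hence $f(Y)\eta^k = \eta^k f(q^{-\frac{k}{2}}Y)$, together with $\eta_{s_1}f(Y) = f(Y^{-1})\eta_{s_1}$; from \eqref{etarels}, $\eta_{s_1}\eta^k = \eta^{-k}\eta_{s_1}$ and $\eta_\pi = \eta\eta_{s_1}$ (combining $\eta=\eta_\pi\eta_{s_1}$ with $\eta_{s_1}^2=1$); and from \eqref{10ineta}, $\eta_{s_1}\mathbf{1}_0 = \mathbf{1}_0$. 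Throughout, coefficients are identified by writing everything in the form $\eta^k(\cdots)\mathbf{1}_0$ and using that $\{\eta^k\mathbf{1}_0\}$ is a $\CC(Y)$-basis of $\tilde H\mathbf{1}_0$ (after commuting any $Y$-function to the left of $\eta^k$ via \eqref{etapastY}).

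For part (a), I would apply the second identity $E_{\ell+1}(X)\mathbf{1}_0 = t^{-\frac12}\eta_\pi E_{-\ell}(X)\mathbf{1}_0$ to the expansion $E_{-\ell}(X)\mathbf{1}_0 = \sum_{j=0}^\ell \eta^{-\ell+2j}D_j^{(-\ell)}(Y)\mathbf{1}_0$ of \eqref{Ddefn}. Writing $\eta_\pi = \eta\eta_{s_1}$ and pushing $\eta_{s_1}$ rightward through $\eta^{-\ell+2j}$ (which becomes $\eta^{\ell-2j}$), through $D_j^{(-\ell)}(Y)$ (inverting $Y$), and finally into $\mathbf{1}_0$, each term becomes $t^{-\frac12}\eta^{\ell+1-2j}D_j^{(-\ell)}(Y^{-1})\mathbf{1}_0$. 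Comparing with $E_{\ell+1}(X)\mathbf{1}_0 = \sum_{j=0}^\ell \eta^{\ell+1-2j}D_j^{(\ell)}(Y)\mathbf{1}_0$, the powers of $\eta$ agree termwise, so $D_j^{(\ell)}(Y) = t^{-\frac12}D_j^{(-\ell)}(Y^{-1})$, which is the claim after replacing $Y$ by $Y^{-1}$.

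For part (b), I would substitute the same expansion of $E_\ell(X)\mathbf{1}_0$ into the first identity $E_{-\ell}(X)\mathbf{1}_0 = (\eta_{s_1}c(Y)+F_\ell(Y))E_\ell(X)\mathbf{1}_0$. The $F_\ell(Y)$ term contributes $\eta^{\ell-2j}F_\ell(q^{-\frac{\ell-2j}{2}}Y)D_j^{(\ell-1)}(Y)\mathbf{1}_0$, while the $\eta_{s_1}c(Y)$ term, after commuting $c(Y)$ past $\eta^{\ell-2j}$ and then $\eta_{s_1}$ past the resulting power of $\eta$, the $Y$-functions, and $\mathbf{1}_0$, contributes $\eta^{-(\ell-2j)}c(q^{-\frac{\ell-2j}{2}}Y^{-1})D_j^{(\ell-1)}(Y^{-1})\mathbf{1}_0$. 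Collecting the coefficient of $\eta^{-\ell+2k}$ (the $\eta_{s_1}c(Y)$ piece at $j=k$, the $F_\ell$ piece at $j=\ell-k$) gives $D_k^{(-\ell)}(Y) = c(q^{-\frac{\ell-2k}{2}}Y^{-1})D_k^{(\ell-1)}(Y^{-1}) + F_\ell(q^{\frac{\ell-2k}{2}}Y)D_{\ell-k}^{(\ell-1)}(Y)$, with the first summand absent at $k=\ell$ and the second at $k=0$. Substituting part (a) as $D_k^{(-\ell)}(Y) = t^{\frac12}D_k^{(\ell)}(Y^{-1})$ and then sending $Y\mapsto Y^{-1}$ turns this into the asserted recursion for $D_k^{(\ell)}(Y)$ in terms of $D_k^{(\ell-1)}(Y)$ and $D_{\ell-k}^{(\ell-1)}(Y^{-1})$, with $k=0$ and $k=\ell$ producing exactly the two stated one-term formulas. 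The base value $D_0^{(0)}(Y)=t^{-\frac12}$ follows from $E_1(X)\mathbf{1}_0 = t^{-\frac12}\eta_\pi E_0(X)\mathbf{1}_0 = t^{-\frac12}\eta\mathbf{1}_0$, using $E_0=1$ and $\eta_{s_1}\mathbf{1}_0=\mathbf{1}_0$; the recursion then determines every $D_j^{(\ell)}$ by induction on $\ell$.

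The only genuine computation is the final coefficient identification: one must check $t^{-\frac12}c(q^{-\frac{\ell-2j}{2}}Y) = t^{\frac12}\frac{(1-t^{-1}Y^{-2}q^{\ell-2j})}{(1-Y^{-2}q^{\ell-2j})}$ and $t^{-\frac12}F_\ell(q^{\frac{\ell-2j}{2}}Y^{-1}) = t^{-\frac12}\frac{(1-t)(1-q^\ell tY^{-2}q^{\ell-2j})}{(1-tq^\ell)(1-Y^{-2}q^{\ell-2j})}$, i.e.\ rewrite the $q$-shifted arguments $q^{\pm\frac{\ell-2j}{2}}Y^{\pm1}$ of $c$ and $F_\ell$ into the $Y^{-2}q^{\ell-2j}$ normal form of the statement (a one-line manipulation clearing a factor of $t$ and of the argument). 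I expect the main obstacle to be bookkeeping rather than any conceptual difficulty: keeping the two summands and their reindexing consistent, handling the boundary powers $k=0,\ell$ correctly, and ensuring that each passage of $\eta_{s_1}$ inverts $Y$ exactly once so the successive $Y\mapsto Y^{-1}$ substitutions compose as intended.
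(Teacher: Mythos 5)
Your proposal is correct and takes essentially the same route as the paper's own proof: part (a) by writing $\eta_\pi = \eta\eta_{s_1}$ and pushing $\eta_{s_1}$ through the $\eta$-powers, the $Y$-coefficients, and into $\mathbf{1}_0$, and part (b) by substituting the expansion of $E_\ell(X)\mathbf{1}_0$ into $E_{-\ell}(X)\mathbf{1}_0 = (\eta_{s_1}c(Y)+F_\ell(Y))E_\ell(X)\mathbf{1}_0$, reindexing the $F_\ell$ sum by $k=\ell-j$, and then applying (a) with $Y\mapsto Y^{-1}$. The final coefficient identifications you flag (rewriting $t^{-\frac12}c(q^{-\frac12(\ell-2j)}Y)$ and $t^{-\frac12}F_\ell(q^{\frac12(\ell-2j)}Y^{-1})$ in the $Y^{-2}q^{\ell-2j}$ normal form) check out exactly as stated.
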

\begin{proof}  (a)
Using the first relation in \eqref{Ddefn}, the second relation in \eqref{E10}, the second relation in \eqref{Ddefn}, 
$$\sum_{j=0}^\ell \eta^{\ell+1-2j} D^{(\ell)}_j(Y)\mathbf{1}_0
=E_{\ell+1}(X)\mathbf{1}_0
= t^{-\frac12} \tau_\pi^\vee E_{-\ell}(X)\mathbf{1}_0
=t^{-\frac12} \eta_\pi \sum_{j=0}^\ell \eta^{-\ell+2j} D_j^{(-\ell)}(Y)\mathbf{1}_0.
$$
By \eqref{etarels}, \eqref{etapastY}, and the second relation in \eqref{10ineta},
\begin{align*}
\eta_\pi \eta^{-\ell+2j} D_j^{(-\ell)}(Y)\mathbf{1}_0
&=\eta_\pi \eta_{s_1}\eta_{s_1}\eta^{-\ell+2j} D_j^{(-\ell)}(Y)\mathbf{1}_0
=\eta\eta_{s_1}\eta^{-\ell+2j} D_j^{(-\ell)}(Y)\mathbf{1}_0
\\
&=\eta\eta^{\ell-2j} D_j^{(-\ell)}(Y^{-1})\eta_{s_1}\mathbf{1}_0
=\eta^{\ell-2j+1} D_j^{(-\ell)}(Y^{-1})\mathbf{1}_0.
\end{align*}
So $\displaystyle{
\sum_{j=0}^\ell \eta^{\ell+1-2j} D^{(\ell)}_j(Y)\mathbf{1}_0
=t^{-\frac12}\sum_{j=0}^\ell \eta^{\ell-2j+1}  D_j^{(-\ell)}(Y^{-1}) \mathbf{1}_0,
}$ giving
$D_j^{(\ell)}(Y) = t^{-\frac12} D_j^{(-\ell)}(Y^{-1})$.

\smallskip\noindent
(b)
Let
$$c(Y) =  \frac{(1-tY^2)}{(1-Y^2)}\qquad\hbox{and}\qquad
F_\ell(Y) = \frac{(1-t)}{(1- t q^\ell )}\frac{(1-q^\ell t Y^2  )}{(1-Y^2)},\quad\hbox{for $\ell\in \ZZ_{>0}$.}
$$
By \eqref{Ddefn} and the first relation in \eqref{etaonE10},
\begin{align*}
\sum_{j=0}^\ell \eta^{-\ell+2j} D_j^{(-\ell)}(Y)\mathbf{1}_0
&= E_{-\ell}(X)\mathbf{1}_0 
= (\eta_{s_1}c(Y)+F_\ell(Y))E_\ell(X) \mathbf{1}_0
\\
&=(\eta_{s_1}c(Y)+F_\ell(Y))\sum_{j=0}^{\ell-1} \eta^{\ell-2j}D_j^{(\ell-1)}(Y)\mathbf{1}_0
\end{align*}
By \eqref{etapastY} and the last relation in \eqref{etarels},
\begin{align*}
\eta_{s_1} c(Y)\eta^{\ell-2j}D^{(\ell-1)}_j(Y) 
&= \eta_{s_1} \eta^{\ell-2j}c(q^{-\frac12(\ell-2j)}Y)D^{(\ell-1)}_j(Y)
= \eta^{-(\ell-2j)}\eta_{s_1} c(q^{-\frac12(\ell-2j)}Y)D^{(\ell-1)}_j(Y) 
\\
&= \eta^{-(\ell-2j)} c(q^{-\frac12(\ell-2j)}Y^{-1})D^{(\ell-1)}_j(Y^{-1}) \eta_{s_1}
\end{align*}
and
$F_\ell(Y) \eta^{\ell-2j}D^{(\ell-1)}_j(Y)
=\eta^{\ell-2j}F_\ell(q^{-\frac12(\ell-2j)}Y) D^{(\ell-1)}_j(Y).
$
So
\begin{align*}
\sum_{j=0}^\ell &\eta^{-(\ell-2j)} D_j^{(-\ell)}(Y)\mathbf{1}_0
=(\eta_{s_1}c(Y)+F_\ell(Y))\sum_{j=0}^{\ell-1} \eta^{\ell-2j}D_j^{(\ell-1)}(Y)\mathbf{1}_0
\\
&=\sum_{j=0}^{\ell-1} \eta^{-(\ell-2j)}c(q^{-\frac12(\ell-2j)}Y^{-1})D_j^{(\ell-1)}(Y^{-1}) \mathbf{1}_0
+\sum_{j=0}^{\ell-1} \eta^{\ell-2j}F_\ell(q^{-\frac12(\ell-2j)}Y) D_j^{(\ell-1)}(Y)\mathbf{1}_0
\end{align*}
where the last equality uses the relation $\eta_{s_1}\mathbf{1}_0 = \mathbf{1}_0$ from \eqref{10ineta}.
Putting $k=\ell-j$ in the second sum makes $j=\ell-k$ and
\begin{align*}
\sum_{j=0}^{\ell-1} \eta^{\ell-2(\ell-k)} F_\ell(q^{-\frac12(\ell-2(\ell-k))}Y)D_j^{(\ell-1)}(Y)\mathbf{1}_0
&=\sum_{k=1}^{\ell} \eta^{-(\ell-2k)}F_\ell(q^{\frac12(\ell-2k)}Y)
D_{\ell-k}^{(\ell-1)}(Y)\mathbf{1}_0
\end{align*}
Thus
$\sum_{j=0}^\ell \eta^{-\ell+2j} D_j^{(-\ell)}(Y)\mathbf{1}_0$ is equal to
\begin{align*}
&\eta^{-\ell} c(q^{-\frac12\ell}Y^{-1})D_0^{(\ell-1)}(Y^{-1})\mathbf{1}_0
+\eta^\ell F_\ell(q^{-\frac12\ell}Y)D_0^{(\ell-1)}(Y)\mathbf{1}_0
\\
&\qquad
+\sum_{j=1}^{\ell-1} \eta^{-(\ell-2j)}\big(
c(q^{-\frac12(\ell-2j)}Y^{-1})D_j^{(\ell-1)}(Y^{-1}) + F_\ell(q^{\frac12(\ell-2j)}Y) D_{\ell-j}^{(\ell-1)}(Y)
\big)
\mathbf{1}_0,
\end{align*}
which implies
$$D_0^{(-\ell)}(Y) = c(q^{-\frac12\ell} Y^{-1})D_0^{(\ell-1)}(Y^{-1}), 
\qquad\qquad
D_\ell^{(-\ell)}(Y) = F_\ell(q^{-\frac12 \ell} Y)D_0^{(\ell-1)}(Y),
\qquad\hbox{and}
$$
\begin{equation*}
D_j^{(-\ell)}(Y) = c(q^{-\frac12(\ell-2j)}Y^{-1})D_j^{(\ell-1)}(Y^{-1})+F_\ell(q^{\frac12(\ell-2j)}Y) D_{\ell-j}^{(\ell-1)}(Y),
\end{equation*}
for $j\in \{1, \ldots, \ell-1\}$.
Combining this with the identity $D_j^{(\ell)}(Y) = t^{-\frac12} D_j^{(-\ell)}(Y^{-1})$ from (a)  gives
$$D_0^{(\ell)}(Y) = t^{-\frac12} c(q^{-\frac12\ell} Y)D_0^{(\ell)}(Y), 
\qquad\qquad
D_\ell^{(\ell)}(Y) = t^{-\frac12} F_\ell(q^{-\frac12 \ell} Y^{-1})D_0^{(\ell-1)}(Y^{-1}),
\qquad\hbox{and}
$$
$$D_j^{(\ell)}(Y) = t^{-\frac12} c(q^{-\frac12(\ell-2j)}Y)D_j^{(\ell-1)}(Y)
+t^{-\frac12} F_\ell(q^{\frac12(\ell-2j)}Y^{-1}) D_{\ell-j}^{(\ell-1)}(Y^{-1}).$$
\end{proof}

\subsection{A formula for $K_j^{(\ell)}(Y)$ in terms of the $D_j^{(\ell)}(Y)$}

Since $E_0(X) = 1$, equation
\eqref{Kdefn} and the first relation in \eqref{10intau} give
$\mathbf{1}_0 E_0(X) \mathbf{1}_0 = \mathbf{1}_0^2 = \mathbf{1}_0(t^{\frac12}+t^{-\frac12})$ so that
$$K_0^{(0)} = t^{\frac12}+t^{-\frac12}.$$


\begin{prop} \label{KfromD}  Let $\ell\in \ZZ_{>0}$ and let $K_j^{(\ell)}(Y)$ and $D_j^{(\ell-1)}(Y)$ be as defined in 
\eqref{Kdefn} and \eqref{Ddefn}.  Then
$$K_0^{(\ell)}(Y) = t^{-\frac12} D_0^{(\ell-1)}(Y) \frac{(1-tY^2)}{(1-Y^2)},
\qquad
K_\ell^{(\ell)}(Y) = t^{\frac12}D_0^{(\ell-1)}(Y^{-1})
\frac{(1-t^{-1}Y^2q^\ell)(1-tY^2)}{(1-tY^2q^\ell)(1-Y^2)},
$$
and, for $j\in \{1, \ldots, \ell-1\}$,
\begin{align}
K_j^{(\ell)}(Y) 
&= t^{\frac12} D_j^{(\ell-1)}(Y)\frac{(1-t^{-1}Y^{-2}) }{ (1-Y^{-2})}
+ t^{-\frac12} D_{\ell-j}^{(\ell-1)}(Y^{-1}) \frac{(1-tY^{-2} q^{\ell-2j})}{ (1-t^{-1}Y^{-2} q^{\ell-2j})}
\frac{(1-t^{-1}Y^2)}{(1-Y^{-2})}.
\label{Krec}
\end{align}
\end{prop}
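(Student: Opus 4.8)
The plan is to compute $\mathbf{1}_0 E_\ell(X)\mathbf{1}_0$ directly by inserting the $D$-expansion \eqref{Ddefn} for $E_\ell(X)\mathbf{1}_0$ and then reducing each resulting summand to the canonical form $\mathbf{1}_0\,\eta^{m}(\hbox{function of }Y)$ demanded by \eqref{Kdefn}. So I would start from
$$\mathbf{1}_0 E_\ell(X)\mathbf{1}_0 = \sum_{j=0}^{\ell-1}\mathbf{1}_0\,\eta^{\ell-2j}D_j^{(\ell-1)}(Y)\,\mathbf{1}_0,$$
whereupon the entire problem collapses to understanding one building block $\mathbf{1}_0\,\eta^{m}D(Y)\,\mathbf{1}_0$ with $m=\ell-2j$ and $D=D_j^{(\ell-1)}$.

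The central step is to eliminate the trailing $\mathbf{1}_0$. Writing $c(Y)=\frac{(1-tY^2)}{(1-Y^2)}$, the first relation in \eqref{10ineta} gives $\mathbf{1}_0=(1+\eta_{s_1})t^{-\frac12}c(Y)$, and I would substitute this for the rightmost $\mathbf{1}_0$, splitting the block into a ``$1$-part'' and an ``$\eta_{s_1}$-part.'' In the $\eta_{s_1}$-part I would drive $\eta_{s_1}$ all the way to the left: first across $D(Y)$ using $f(Y)\eta_{s_1}=\eta_{s_1}f(Y^{-1})$ (second relation of \eqref{etapastY}), then across $\eta^m$ using $\eta^m\eta_{s_1}=\eta_{s_1}\eta^{-m}$ (last relation of \eqref{etarels}), and finally I would absorb it into the leading $\mathbf{1}_0$ via $\mathbf{1}_0\eta_{s_1}=\mathbf{1}_0\,t^{-1}\frac{(1-tY^{-2})}{(1-t^{-1}Y^{-2})}$ (third relation of \eqref{10ineta}). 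Sliding the freed scalar back through $\eta^{-m}$ with $\eta Y=Yq^{\frac12}\eta$ from \eqref{etapastY} yields the clean reduction
$$\mathbf{1}_0\,\eta^{m}D(Y)\,\mathbf{1}_0 = \mathbf{1}_0\,\eta^{m}\big(t^{-\frac12}c(Y)D(Y)\big)+\mathbf{1}_0\,\eta^{-m}\big(t^{-\frac12}c(Y)\,g(q^{m/2}Y)\,D(Y^{-1})\big),$$
where $g(Y)=t^{-1}\frac{(1-tY^{-2})}{(1-t^{-1}Y^{-2})}$.

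With this identity in hand I would collect the terms of $\mathbf{1}_0 E_\ell(X)\mathbf{1}_0$ by the power of $\eta$. The $1$-parts contribute exponent $\eta^{\ell-2j}$ for $j\in\{0,\dots,\ell-1\}$, while the $\eta_{s_1}$-parts contribute $\eta^{-(\ell-2j)}=\eta^{\ell-2(\ell-j)}$, which after the reindexing $j\mapsto\ell-j$ is again of the form $\eta^{\ell-2j'}$. Matching against $\sum_{j}\mathbf{1}_0\eta^{\ell-2j}K_j^{(\ell)}(Y)$, the extreme exponents $\eta^{\ell}$ and $\eta^{-\ell}$ each receive a single contribution (the $1$-part of the $j=0$ block, giving $K_0^{(\ell)}$, and the $\eta_{s_1}$-part of the $j=0$ block, giving $K_\ell^{(\ell)}$, respectively), whereas every intermediate exponent receives exactly one $1$-part carrying $D_j^{(\ell-1)}(Y)$ and one $\eta_{s_1}$-part carrying $D_{\ell-j}^{(\ell-1)}(Y^{-1})$, producing the two summands of \eqref{Krec}.

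I expect the only real obstacle to be the rational-function bookkeeping at the end: verifying the equivalence $t^{-\frac12}c(Y)=t^{\frac12}\frac{(1-t^{-1}Y^{-2})}{(1-Y^{-2})}$ and simplifying $g(q^{m/2}Y)$ with $m=2j-\ell$ into the factor $\frac{(1-tY^{-2}q^{\ell-2j})}{(1-t^{-1}Y^{-2}q^{\ell-2j})}$, so that the output literally reproduces the stated $K_j^{(\ell)}(Y)$. The delicate point throughout is tracking which $q$-shift and which inversion $Y\mapsto Y^{-1}$ lands on each factor; once these are pinned down the two displayed extreme cases drop out immediately and the middle formula follows. No structural input beyond the $D$-expansion \eqref{Ddefn} and the commutation relations \eqref{etarels}, \eqref{etapastY}, \eqref{10ineta} is required.
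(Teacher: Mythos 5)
Your proposal is correct and takes essentially the same route as the paper's own proof: the paper likewise inserts the $D$-expansion, replaces the rightmost $\mathbf{1}_0$ by $(1+\eta_{s_1})t^{-\frac12}c(Y)$ via \eqref{10ineta}, pushes $\eta_{s_1}$ left through $D_j^{(\ell-1)}(Y)$ and $\eta^{\ell-2j}$ using \eqref{etapastY} and \eqref{etarels}, absorbs it with $\mathbf{1}_0\eta_{s_1}=\mathbf{1}_0\,t^{-1}\frac{(1-tY^{-2})}{(1-t^{-1}Y^{-2})}$, slides that factor across $\eta^{-(\ell-2j)}$, and reindexes $k=\ell-j$ before matching coefficients of $\eta^{\ell-2j}$. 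Your bookkeeping (the identity $t^{-\frac12}c(Y)=t^{\frac12}\frac{(1-t^{-1}Y^{-2})}{(1-Y^{-2})}$ and the evaluation of $g(q^{m/2}Y)$ at $m=2j-\ell$) is exactly what is needed to recover the stated formulas, so there is no gap.
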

\begin{proof}  Let 
$$c(Y) = \frac{(1-tY^2)}{(1-Y^2)} = t\frac{(1-t^{-1}Y^{-2})}{(1-Y^{-2})}.$$  
Then, by \eqref{Ddefn}, the first relation in \eqref{10ineta},
the second relation in \eqref{etapastY}
and the relation $\eta \eta_{s_1} = \eta_{s_1}\eta^{-1}$ from \eqref{etarels},
\begin{align*}
\mathbf{1}_0 E_\ell(X) \mathbf{1}_0
&= \mathbf{1}_0 \sum_{j=0}^{\ell-1} \eta^{\ell-2j} D_j^{(\ell-1)}(Y) \mathbf{1}_0
= \mathbf{1}_0 \sum_{j=0}^{\ell-1} \eta^{\ell-2j} D_j^{(\ell-1)}(Y) (1+\eta_{s_1}) t^{-\frac12} c(Y)
\\
&= \sum_{j=0}^{\ell-1} \mathbf{1}_0\eta^{\ell-2j} D_j^{(\ell-1)}(Y) t^{-\frac12} c(Y) 
+\sum_{j=0}^{\ell-1} \mathbf{1}_0\eta_{s_1}\eta^{-(\ell-2j)} D_j^{(\ell-1)}(Y^{-1}) t^{-\frac12} c(Y).
\end{align*}
By the last relation in \eqref{10ineta} and the last relation in \eqref{etapastY}, 
$$\mathbf{1}_0\eta_{s_1}\eta^{-(\ell-2j)}
=\mathbf{1}_0 t^{-1} \frac{(1-tY^{-2})}{(1-t^{-1}Y^{-2})} \eta^{-(\ell-2j)}
= \mathbf{1}_0 \eta^{-(\ell-2j)} t^{-1} \frac{(1-tY^{-2}q^{-(\ell-2j)}) }{ (1-t^{-1}Y^{-2}q^{-(\ell-2j)})}
$$
and thus, by reindexing with $k = \ell -j$ and using $-(\ell-2j) = \ell-2k$, the second sum is
\begin{align*}
\sum_{j=0}^{\ell-1} \mathbf{1}_0\eta_{s_1}\eta^{-(\ell-2j)} D_j^{(\ell-1)}(Y^{-1}) t^{-\frac12} c(Y)
&=\sum_{k=1}^\ell
\mathbf{1}_0   \eta^{\ell-2k} t^{-1} \frac{(1-tY^{-2}q^{\ell-2k}) }{ (1-t^{-1}Y^{-2}q^{\ell-2k})}
D_{\ell-k}^{(\ell-1)}(Y^{-1}) t^{-\frac12} c(Y).
\end{align*}
Hence, by \eqref{Kdefn}, 
\begin{align*}
\sum_{j=0}^\ell &\mathbf{1}_0 \eta^{\ell-2j}K_j^{(\ell)}(Y)
= \mathbf{1}_0 E_\ell(X)\mathbf{1}_0 
\\
&=\sum_{j=0}^{\ell-1} \mathbf{1}_0t^{-\frac12} \eta^{\ell-2j} D_j^{(\ell-1)}(Y)  c(Y) 
+
\sum_{k=1}^\ell
\mathbf{1}_0  t^{-\frac32} \eta^{\ell-2k}  \frac{(1-tY^{-2}q^{\ell-2k}) }{ (1-t^{-1}Y^{-2}q^{\ell-2k})}
D_{\ell-k}^{(\ell-1)}(Y^{-1}) c(Y).
\end{align*}
\end{proof}


\section{Product expressions for $D_j^{(\ell)}(Y)$ and $K_j^{(\ell)}(Y)$}

In this section we establish product formulas for the coefficients in the operators
\begin{equation*}
E_\ell(X)\mathbf{1}_0
= \sum_{j=0}^{\ell-1} \eta^{\ell-2j} D^{(\ell-1)}_j(Y)\mathbf{1}_0,
\qquad
E_{-\ell}(X)\mathbf{1}_0
=\sum_{j=0}^\ell \eta^{-\ell+2j} D_j^{(-\ell)}(Y)\mathbf{1}_0
\qquad\hbox{and}
\end{equation*}
\begin{equation*}
\mathbf{1}_0 E_\ell(X)\mathbf{1}_0 = \sum_{j=0}^\ell \mathbf{1}_0 \eta^{\ell-2j}K_j^{(\ell)}(Y).
\end{equation*}
These coefficients turn out to be something like generalized binomial coefficients, determined by the recursions that
were established in Proposition \ref{Drecursion} and \ref{KfromD}.  
These product formulas provide a kind of binomial theorem for
operators $E_\ell(X)\mathbf{1}_0$ and $\mathbf{1}_0 E_\ell(X)\mathbf{1}_0$, as elements of 
the double affine Hecke algebra $\tilde{H}$.  The final result is Theorem \ref{DandKresult}.

\subsection{$q$-$t$-binomial coefficients}

For $\ell\in \ZZ_{\ge 0}$ and $j\in \{0, \ldots, \ell\}$ define
\begin{equation}
(z;q)_j = (1-z)(1-zq)(1-zq^2)\cdots (1-zq^{j-1})
\qquad\hbox{and}\qquad
\genfrac[]{0pt}{0}{\ell}{j}_{q,t} 
= \frac{ \frac{(q;q)_\ell}{(t;q)_\ell} } { \frac{(q;q)_j}{(t;q)_j} \frac{(q;q)_{\ell-j}}{(t;q)_{\ell-j}} }.
\label{qtbin}
\end{equation}
For $a,b\in \ZZ$ with $a\le b$ let
$$(1-zq^{a..b}) = (1-zq^a)(1-zq^{a+1})\cdots (1-zq^{b-1})(1-zq^b).$$
With this notation,
\begin{align*}
\genfrac[]{0pt}{0}{\ell}{j}_{q,t} 
&
= \frac{(1-q^{1..\ell})}{(1-q^{1..j})(1-q^{1..\ell-j})}\frac{(1-tq^{0..j-1})(1-tq^{0..\ell-j-1})}{(1-tq^{0..\ell-1})}
=\frac{(1-q^{j+1..\ell})}{(1-q^{1..\ell-j})}\frac{(1-tq^{0..\ell-j-1})}{(1-tq^{j..\ell-1})}.
\end{align*}
Then
\begin{align}
\genfrac[]{0pt}{0}{\ell}{\ell-j}_{q,t}
=\genfrac[]{0pt}{0}{\ell}{j}_{q,t}
\qquad\hbox{and}\qquad
\genfrac[]{0pt}{0}{\ell+1}{j}_{q,t}
=\genfrac[]{0pt}{0}{\ell}{j}_{q,t}\cdot
\frac{(1-q^{\ell+1})(1-tq^{\ell-j})}{(1-q^{\ell+1-j})(1-tq^\ell)}.
\label{qtbinrels}
\end{align}

\subsection{$Y$-binomial coefficients}

For $\ell\in \ZZ_{\ge 0}$ and $j\in \{0,1,\ldots, \ell\}$ define a rational function in $Y$ by
\begin{align}
\genfrac(){0pt}{0}{\ell}{j}_{Y} 
=
\frac{(t^{-1}Y^{-2} q^{-(j-1)};q)_{\ell-j} (tY^{-2}q^{\ell-2j};q)_j}{  (Y^{-2} q;q)_{\ell-j} (Y^{-2}q^{-j};q)_j }.
\label{Ybin}
\end{align}
An alternative expression is
$$
\genfrac(){0pt}{0}{\ell}{j}_{Y} 
= 
\frac{(1-t^{-1}Y^{-2} q^{-(j-1)..\ell-2j}) (1-tY^{-2} q^{\ell-2j..\ell-j-1})}
{(1-Y^{-2}q^{1..\ell-j}) (1-Y^{-2}q^{-j..-1})}.
$$
which, in the alcove walk point of view of \cite{Yi10}, might be thought of as a weighted alcove walk
of total length $\ell$ with $j$ left moving crossings and $\ell-j$ right moving crossings.
$$
\begin{tikzpicture}[scale=0.8]
\tikzstyle{every node}=[font=\small]
	\node at (0, 0) {$\bullet$};  
      \draw[-,thick] (-7.5,0) -- (7.5,0) ;
      \draw[-] (-7,-0.5) -- (-7,0.5) ;
      \draw[-] (-6,-0.5) -- (-6,0.5) ;
      \draw[-] (-5,-0.5) -- (-5,0.5) ;
      \draw[-] (-4,-0.5) -- (-4,0.5) ;
      \draw[-] (-3,-0.5) -- (-3,0.5) ;
      \draw[-] (-2,-0.5) -- (-2,0.5) ;
      \draw[-] (-1,-0.5) -- (-1,0.5) ;
      \draw[-] (0,-0.5) -- (0,0.5) ;
      \draw[-] (1,-0.5) -- (1,0.5) ;
      \draw[-] (2,-0.5) -- (2,0.5) ;
      \draw[-] (3,-0.5) -- (3,0.5) ;
      \draw[-] (4,-0.5) -- (4,0.5) ;
      \draw[-] (5,-0.5) -- (5,0.5) ;
      \draw[-] (6,-0.5) -- (6,0.5) ;
      \draw[-] (7,-0.5) -- (7,0.5) ;
\draw[->] (-0.5,0.25) -- (-1.5,0.25);
\draw[->] (-1.5,0.25) -- (-2.5,0.25);
\draw[->] (-2.5,0.25) -- (-3.5,0.25);
\draw[->] (-3.5,0.25) -- (-4.5,0.25);
\draw[->] (-4.5,0.25) -- (-5.5,0.25);
\draw[-] (-5.5,0.25) -- (-5.9,0.25);
\draw[-] (-5.9,0.25) -- (-5.9,0.5);
\draw[->] (-5.9,0.5) -- (-5.5,0.5);
\draw[->] (-5.5,0.5) -- (-4.5,0.5);
\draw[->] (-4.5,0.5) -- (-3.5,0.5);
\draw[->] (-3.5,0.5) -- (-2.5,0.5);
\draw[->] (-2.5,0.5) -- (-1.5,0.5);
\draw[->] (-1.5,0.5) -- (-0.5,0.5);
\draw[->] (-0.5,0.5) -- (0.5,0.5);
\draw[->] (0.5,0.5) -- (1.5,0.5);
\draw[->] (1.5,0.5) -- (2.5,0.5);
\draw[->] (2.5,0.5) -- (3.5,0.5);
\draw[->] (3.5,0.5) -- (4.5,0.5);
\draw[->] (4.5,0.5) -- (5.5,0.5);
\node at (-5,1) {$\frac{1-t^{-1}Y^{-2}q^{-(j-1)}}{1-Y^{-2}q}$};
\node at (0,1) {$\cdots$};
\node at (5,1) {$\frac{1-t^{-1}Y^{-2}q^{\ell-2j}}{1-Y^{-2}q^{\ell-j}}$};
\node at (-5,-1) {$\frac{1-tY^{-2}q^{\ell-2j}}{1-Y^{-2}q^{-j}}$};
\node at (-3,-1) {$\cdots$};
\node at (-1,-1) {$\frac{1-tY^{-2}q^{\ell-j-1}}{1-Y^{-2}q^{-1}}$};
\end{tikzpicture}
$$
Then (see the examples in Section \ref{Ybinexamples})
\begin{align}
\genfrac(){0pt}{0}{\ell}{\ell-j}_{Y^{-1}} 
&=
\frac{(1-t^{-1}Y^{2} q^{-(\ell-j-1)..\ell-2(\ell-j)}) (1-tY^{2} q^{\ell-2(\ell-j)..\ell-(\ell-j)-1})}
{(1-Y^{2}q^{1..\ell-(\ell-j)}) (1-Y^{2}q^{-(\ell-j)..-1})}
\nonumber \\
&=
\frac{(1-t^{-1}Y^{2} q^{-(\ell-j-1)..-(\ell-2j)}) (1-tY^{2} q^{-(\ell-2j)..j-1})}
{(1-Y^{2}q^{1..j}) (1-Y^{2}q^{-(\ell-j)..-1})}
\nonumber \\
&= 
\frac{(1-tY^{-2} q^{\ell-2j..\ell-j-1}) (1-t^{-1}Y^{-2} q^{-(j-1)..\ell-2j})}
{(1-Y^{-2}q^{-j..-1}) (1-Y^{-2}q^{1..\ell-j})}
\nonumber \\
&\qquad
\cdot \frac{ (t^{-1}Y^2)^j (q^{-(\ell-j)})^j q^{\frac12j(j-1)} (tY^2)^{\ell-j} (q^{-(\ell-2j)-1})^{\ell-j}q^{\frac12(\ell-j)(\ell-j-1)}}
{Y^{2j} q^{\frac12j(j-1)} Y^{2(\ell-j)} (q^{-(\ell-j+1)})^{\ell-j} q^{\frac12(\ell-j)(\ell-j-1)}}
\nonumber \\
&= t^{\ell-2j} q^{0} \cdot \genfrac(){0pt}{0}{\ell}{j}_{Y} .
\label{Ybinflipped}
\end{align}
Also
\begin{align}
\genfrac(){0pt}{0}{\ell+1}{j}_{Y} 
&= 
\frac{(1-t^{-1}Y^{-2} q^{-(j-1)..\ell+1-2j}) (1-tY^{-2} q^{\ell+1-2j..\ell+1-j-1})}
{(1-Y^{-2}q^{1..\ell+1-j}) (1-Y^{-2}q^{-j..-1})}
\nonumber \\
&= \genfrac(){0pt}{0}{\ell}{j}_{Y} \cdot
\frac{(1-t^{-1}Y^{-2}q^{\ell+1-2j})(1-tY^{-2}q^{\ell-j})}{(1-tY^{-2}q^{\ell-2j})(1-Y^{-2}q^{\ell+1-j})}
\label{Ybinup1}
\end{align}

\subsection{Definition of the products $\tilde K_j^{(\ell)}(Y)$ and $\tilde D_j^{(\ell-1)}(Y)$} 

For $\ell\in \ZZ_{>0}$ and $j\in \{0, \ldots, \ell\}$ define
\begin{align}
\tilde D_j^{(\ell)}(Y) &= t^{-\frac12(\ell+1)}\cdot
t^{\ell-j} \cdot 
\genfrac[]{0pt}{0}{\ell}{j}_{q,t}\cdot 
\genfrac(){0pt}{0}{\ell}{j}_{Y}\cdot 
\frac{(1-tq^{\ell-j})}{(1-tq^\ell)}\cdot 
\frac{(1-tY^{-2}q^{\ell-j})}{(1-tY^{-2}q^{\ell-2j})} 
\label{tildeDdefn}
\end{align}
\begin{align}
\tilde D_j^{(-\ell)}(Y)
=t^{-\frac12\ell}\cdot (qt)^j\cdot 
\genfrac[]{0pt}{0}{\ell}{j}_{q,t}\cdot 
\genfrac(){0pt}{0}{\ell}{\ell-j}_{Y}\cdot 
\frac{(1-tq^{\ell-j})}{(1-tq^\ell)}\cdot 
\frac{(1-t^{-1}Y^{-2}q^{-(\ell-j)})}{(1-t^{-1}Y^{-2}q^{-(\ell-2j)})}
\label{Dnegprod}
\end{align}
\begin{align}
\tilde K_j^{(\ell)}(Y) 
&= t^{-\frac12(\ell-1)}\cdot 
t^{\ell-1-j}\cdot
\genfrac[]{0pt}{0}{\ell}{j}_{q,t} \cdot
\genfrac(){0pt}{0}{\ell}{j}_{Y} \cdot
\frac{(1-Y^{-2}q^{\ell-2j})}{(1-t^{-1}Y^{-2}q^{\ell-2j})}\cdot
\frac{(1-t^{-1}Y^{-2})}{(1-Y^{-2})}
\label{tildeKdefn}
\end{align}
The following Proposition provides useful relationships between these expressions which follow from
\eqref{qtbinrels}, \eqref{Ybin} and \eqref{Ybinflipped}.

\begin{prop} 
\begin{equation}
\frac{\tilde D_j^{(\ell-1)}(Y)}{ \tilde K_j^{(\ell)}(Y) }
= t^{-\frac12} \cdot 
\frac{(1-q^{\ell-j})}{(1-q^\ell)}\cdot \frac{(1-Y^{-2}q^{\ell-j})}{(1-Y^{-2}q^{\ell-2j})}\cdot \frac{(1-Y^{-2})}{(1-t^{-1}Y^{-2})}
\label{DtoK}
\end{equation}
\begin{equation}
\frac{\tilde D^{(\ell-1)}_{\ell-j}(Y^{-1}) }{\tilde K_j^{(\ell)}(Y)}
=t^{\frac12} q^{\ell-j}\cdot 
\frac{ (1-q^j) }{(1-q^\ell)}\cdot
\frac{(1-Y^{-2})}{(1-t^{-1}Y^{-2})}\cdot
\frac{(1-t^{-1}Y^{-2}q^{\ell-2j})}{(1-tY^{-2}q^{\ell-2j})}\cdot
\frac{(1-Y^{-2}q^{-j})}{(1-Y^{-2}q^{\ell-2j})} .
\label{DinvtoK}
\end{equation}
\begin{align}
\frac{\tilde D_j^{(\ell)}(Y) }{ \tilde D^{(\ell+1)}_j(Y)}
= t^{-\frac12}  \cdot
\frac{(1-tq^{\ell+1})}{(1-tq^{\ell+1-j})}\cdot 
\frac{(1-q^{\ell+1-j})}{(1-q^{\ell+1})}\cdot
\frac{(1-tY^{-2}q^{\ell+1-2j})}{(1-tY^{-2}q^{\ell+1-j})}\cdot
\frac{(1-Y^{-2}q^{\ell+1-j})}{(1-t^{-1}Y^{-2}q^{\ell+1-2j})}.
\label{Dup1}
\end{align}
\begin{align}
\frac{ \tilde D_{\ell+1-j}^{(\ell)}(Y^{-1}) }{\tilde D^{(\ell+1)}_j(Y) }
&= 
t^{\frac12} q^{\ell+1-j} \cdot
\frac{(1-tq^{\ell+1})}{(1-tq^{\ell+1-j})}\cdot \frac{(1-q^j)}{(1-q^{\ell+1})}\cdot 
\frac{(1-Y^{-2}q^{-j})}{(1-tY^{-2}q^{\ell+1-j})}
\label{DflippedB}
\end{align}
\begin{align}
\frac{ \tilde D^{(\ell)}_{\ell+1-j}(Y^{-1}) }{\tilde D^{(\ell)}_j(Y) }
&=  tq^{\ell+1-j}\cdot \frac{(1-q^j)}{(1-q^{\ell+1-j})}\cdot 
\frac{(1-Y^{-2}q^{-j})(1-t^{-1}Y^{-2}q^{\ell+1-2j})}
{(1-tY^{-2}q^{\ell+1-2j})(1-Y^{-2}q^{\ell+1-j})},
\label{DinvtoD}
\end{align}
\end{prop}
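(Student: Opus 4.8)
The plan is to read all five identities as elementary identities between rational functions in $q$, $t$, and $Y$, and to prove each by substituting the closed product formulas \eqref{tildeDdefn} and \eqref{tildeKdefn} and cancelling, using only the binomial relations \eqref{qtbinrels}, \eqref{Ybin}, \eqref{Ybinflipped} and \eqref{Ybinup1}. The key organizing observation, which I would exploit to cut the work down, is that the five ratios are not independent: the two identities \eqref{DinvtoK} and \eqref{DflippedB} factor through the others. Indeed, writing $\tilde D^{(\ell-1)}_{\ell-j}(Y^{-1})/\tilde K_j^{(\ell)}(Y) = \big(\tilde D^{(\ell-1)}_{\ell-j}(Y^{-1})/\tilde D^{(\ell-1)}_j(Y)\big)\cdot\big(\tilde D^{(\ell-1)}_j(Y)/\tilde K_j^{(\ell)}(Y)\big)$ exhibits \eqref{DinvtoK} as the product of \eqref{DinvtoD} (taken with $\ell$ replaced by $\ell-1$) and \eqref{DtoK}, and likewise $\tilde D^{(\ell)}_{\ell+1-j}(Y^{-1})/\tilde D^{(\ell+1)}_j(Y) = \big(\tilde D^{(\ell)}_{\ell+1-j}(Y^{-1})/\tilde D^{(\ell)}_j(Y)\big)\cdot\big(\tilde D^{(\ell)}_j(Y)/\tilde D^{(\ell+1)}_j(Y)\big)$ exhibits \eqref{DflippedB} as the product of \eqref{DinvtoD} and \eqref{Dup1}. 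One checks directly that multiplying the stated right-hand sides reproduces the stated right-hand sides of \eqref{DinvtoK} and \eqref{DflippedB} (the $(1-Y^{-2}q^{\ell-j})$ and $(1-Y^{-2}q^{\ell+1-j})$ factors telescope away). Thus it suffices to prove the three ``fundamental'' identities \eqref{DtoK}, \eqref{Dup1} and \eqref{DinvtoD}.

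The two unflipped identities \eqref{DtoK} and \eqref{Dup1}, in which every binomial is evaluated at the common argument $Y$, I would dispatch first. For \eqref{DtoK} the integer power $t^{\ell-1-j}$ is common to $\tilde D_j^{(\ell-1)}(Y)$ and $\tilde K_j^{(\ell)}(Y)$ and cancels, the half-integer prefactors contribute $t^{-\frac12(\ell)}/t^{-\frac12(\ell-1)}=t^{-\frac12}$, the $q$-$t$-binomial ratio $\genfrac[]{0pt}{0}{\ell-1}{j}_{q,t}\big/\genfrac[]{0pt}{0}{\ell}{j}_{q,t}$ is read off from the second relation in \eqref{qtbinrels} with $\ell$ replaced by $\ell-1$, and the $Y$-binomial ratio $\genfrac(){0pt}{0}{\ell-1}{j}_{Y}\big/\genfrac(){0pt}{0}{\ell}{j}_{Y}$ from \eqref{Ybinup1}; the leftover correction factors then combine to the claimed expression. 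The identity \eqref{Dup1} goes the same way, now using \eqref{qtbinrels} and \eqref{Ybinup1} at level $\ell$ to pass from level $\ell$ to level $\ell+1$, with the prefactor ratio giving $t^{-\frac12}$ as required.

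The crux is \eqref{DinvtoD}, the only identity involving a genuine $Y\mapsto Y^{-1}$ flip with complementary subscript. Here I would substitute $j\mapsto \ell+1-j$ and $Y\mapsto Y^{-1}$ into \eqref{tildeDdefn}, convert the resulting $Y$-binomial $\genfrac(){0pt}{0}{\ell}{\ell+1-j}_{Y^{-1}}$ by the flip relation \eqref{Ybinflipped} (which produces the monomial $t^{\ell-2j+2}$ and the binomial $\genfrac(){0pt}{0}{\ell}{j-1}_{Y}$), rewrite the $q$-$t$-binomial $\genfrac[]{0pt}{0}{\ell}{\ell+1-j}_{q,t}=\genfrac[]{0pt}{0}{\ell}{j-1}_{q,t}$ via the symmetry in \eqref{qtbinrels}, and flip each correction factor through $(1-tY^{2}q^{a})=-tY^{2}q^{a}(1-t^{-1}Y^{-2}q^{-a})$ so that the whole expression is presented in $Y^{-2}$. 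The main obstacle is the bookkeeping of the monomial powers of $t$, $q$ and $Y^{2}$: the $t^{\ell-2j+2}$ coming from \eqref{Ybinflipped} must combine with the $t^{(j-1)-(\ell-j)}=t^{2j-\ell-1}$ coming from the explicit $t^{\ell-j}$ prefactors to give precisely $t^{1}$, while the monomials extracted when flipping the correction-factor quotient must contribute $q^{\ell+1-j}$ and leave no residual power of $t$ or $Y^{2}$, and the remaining $(1-\cdots)$ factors (obtained by comparing $\genfrac(){0pt}{0}{\ell}{j-1}_{Y}$ with $\genfrac(){0pt}{0}{\ell}{j}_{Y}$ and $\genfrac[]{0pt}{0}{\ell}{j-1}_{q,t}$ with $\genfrac[]{0pt}{0}{\ell}{j}_{q,t}$ directly from the definitions \eqref{Ybin} and \eqref{qtbin}) must match the rational part of the right-hand side. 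To keep this tractable I would flip one correction factor at a time, record each extracted monomial, and only at the end assemble the total power of $t$, the total power of $q$, and verify that the net monomial power of $Y^{2}$ is zero (as it must be, since the right-hand side involves $Y$ only through factors $(1-\cdots Y^{-2})$). Once organized in this way the computation is purely mechanical, and with \eqref{DtoK}, \eqref{Dup1} and \eqref{DinvtoD} in hand the identities \eqref{DinvtoK} and \eqref{DflippedB} follow immediately from the two factorizations above.
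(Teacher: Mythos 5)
Your proposal is correct, and its underlying method is the same as the paper's: substitute the closed formulas \eqref{tildeDdefn} and \eqref{tildeKdefn}, cancel using the binomial relations, prove three of the five identities directly, and obtain the remaining two by composing ratios. Both of your factorizations are right — \eqref{DinvtoD} taken at level $\ell-1$ times \eqref{DtoK} does reproduce \eqref{DinvtoK}, and \eqref{DinvtoD} times \eqref{Dup1} does reproduce \eqref{DflippedB}, with the telescoping cancellations exactly as you describe. The one organizational difference from the paper is which flip identity is taken as primitive. The paper computes \eqref{DflippedB} directly — raising the level to $\ell+1$ via \eqref{Ybinup1}-type corrections and then applying \eqref{Ybinflipped}, so every step stays within the displayed relations \eqref{qtbinrels}, \eqref{Ybinup1}, \eqref{Ybinflipped} — and then derives \eqref{DinvtoD} as the quotient \eqref{DflippedB}$/$\eqref{Dup1}; you do the reverse, proving the same-level flip \eqref{DinvtoD} directly and deriving \eqref{DflippedB} as a product. (The derivation of \eqref{DinvtoK} is identical in both routes.) What the paper's ordering buys is that its direct computations never leave the toolkit of displayed relations; the cost of your ordering is that the direct proof of \eqref{DinvtoD} requires comparing $\genfrac(){0pt}{0}{\ell}{j-1}_{Y}$ with $\genfrac(){0pt}{0}{\ell}{j}_{Y}$ and $\genfrac[]{0pt}{0}{\ell}{j-1}_{q,t}$ with $\genfrac[]{0pt}{0}{\ell}{j}_{q,t}$ — a shift of $j$ at fixed $\ell$ that is not among the displayed relations and must be read off from \eqref{Ybin} and \eqref{qtbin}, as you acknowledge. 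That step is elementary but is precisely where the monomial bookkeeping you outline has to be carried out, and it is roughly as laborious as the paper's direct proof of \eqref{DflippedB}; logically the two routes are equivalent.
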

\begin{proof}
All of these are proved by using the relations \eqref{qtbinrels}, \eqref{Ybin} and \eqref{Ybinflipped}
and cancelling all common factors from the numerator and denominators.  
The proof of \eqref{DtoK} is similar to the proofs of \eqref{Dup1}, \eqref{DflippedB} and
\eqref{DinvtoD} and \eqref{DinvtoK}, which are as follows.

Using \eqref{tildeDdefn}, \eqref{Ybinup1} and the second relation in \eqref{qtbinrels} gives \eqref{Dup1}:
\begin{align}
&\tilde D_j^{(\ell)}(Y) = 
t^{-\frac12(\ell+1)}\cdot
t^{\ell-j} \cdot 
\genfrac[]{0pt}{0}{\ell}{j}_{q,t}\cdot \frac{(1-tq^{\ell-j})}{(1-tq^\ell)}\cdot 
\genfrac(){0pt}{0}{\ell}{j}_{Y}\cdot \frac{(1-tY^{-2}q^{\ell-j})}{(1-tY^{-2}q^{\ell-2j})} 
\nonumber \\
&= 
t^{\frac12}t^{-\frac12(\ell+2)}\cdot
t^{-1}t^{\ell+1-j} \cdot 
\genfrac[]{0pt}{0}{\ell+1}{j}_{q,t}\cdot \frac{(1-q^{\ell+1-j})\cancel{(1-tq^\ell)} }{(1-q^{\ell+1})\cancel{(1-tq^{\ell-j})}}\cdot 
\frac{\cancel{(1-tq^{\ell-j})}}{\cancel{(1-tq^\ell)} }\cdot 
\nonumber \\
&\qquad \cdot
\genfrac(){0pt}{0}{\ell+1}{j}_{Y}\cdot 
\frac{\cancel{(1-tY^{-2}q^{\ell-2j})} (1-Y^{-2}q^{\ell+1-j})}{(1-t^{-1}Y^{-2}q^{\ell+1-2j}) \cancel{(1-tY^{-2}q^{\ell-j})} }\cdot
\frac{\cancel{(1-tY^{-2}q^{\ell-j})} }{\cancel{(1-tY^{-2}q^{\ell-2j})} } 
\nonumber \\
&= t^{-\frac12} \tilde D^{(\ell+1)}_j(Y) \cdot
\frac{(1-tq^{\ell+1})}{(1-tq^{\ell+1-j})}\cdot 
\frac{(1-q^{\ell+1-j})}{(1-q^{\ell+1})}\cdot
\frac{(1-tY^{-2}q^{\ell+1-2j})}{(1-tY^{-2}q^{\ell+1-j})}\cdot
\frac{(1-Y^{-2}q^{\ell+1-j})}{(1-t^{-1}Y^{-2}q^{\ell+1-2j})}.
\nonumber
\end{align}
Using \eqref{tildeDdefn}, \eqref{Ybinflipped} and the first relation in \eqref{qtbinrels} gives \eqref{DflippedB}:
\begin{align}
&t^{\frac12(\ell+1)}\tilde D_{\ell+1-j}^{(\ell)}(Y^{-1})
= 
t^{\ell-(\ell+1-j)}\cdot 
\genfrac[]{0pt}{0}{\ell}{\ell+1-j}_{q,t} \cdot
\frac{(1-tq^{\ell-(\ell+1-j)})}{(1-tq^\ell)} \cdot
\genfrac(){0pt}{0}{\ell}{\ell+1-j}_{Y^{-1}} \cdot 
\frac{(1-tY^{2}q^{\ell-(\ell+1-j)})}{(1-t Y^{2}q^{\ell-2(\ell+1-j)})}
\nonumber \\
&= 
t^{j-1}\cdot 
\genfrac[]{0pt}{0}{\ell+1}{\ell+1-j}_{q,t} \cdot 
\frac{(1-q^{\ell+1-(\ell+1-j)}) \cancel{(1-tq^\ell)} }{(1-q^{\ell+1}) \cancel{(1-tq^{\ell-(\ell+1-j)})} }\cdot
\frac{ \cancel{(1-tq^{j-1})} }{ \cancel{(1-tq^\ell)} }
\nonumber \\
&\qquad
\cdot
\genfrac(){0pt}{0}{\ell+1}{\ell+1-j}_{Y^{-1}} \cdot 
\frac{ \cancel{(1-tY^{2}q^{\ell-2(\ell+1-j)})} (1-Y^{2}q^{\ell+1-(\ell+1-j)})}
{(1-t^{-1}Y^{2}q^{\ell+1-2(\ell+1-j)}) \cancel{(1-tY^{2}q^{\ell-(\ell+1-j)})} }\cdot
\frac{\cancel{(1-tY^{2}q^{\ell-(\ell+1-j)})} }{ \cancel{(1-t Y^{2}q^{\ell-2(\ell+1-j)})} }
\nonumber \\
&= 
t^{j-1}\cdot 
\genfrac[]{0pt}{0}{\ell+1}{j}_{q,t} \cdot \frac{(1-q^j) }{ (1-q^{\ell+1}) }\cdot
t^{\ell+1-2j}\cdot \genfrac(){0pt}{0}{\ell+1}{j}_{Y} \cdot 
\frac{(1-Y^2q^j)}{(1-t^{-1}Y^2 q^{-(\ell+1-2j)})}
\nonumber \\
&= 
t^{\ell-j}\cdot
\genfrac[]{0pt}{0}{\ell+1}{j}_{q,t} \cdot
\frac{(1-q^j)}{(1-q^{\ell+1})} \cdot 
\genfrac(){0pt}{0}{\ell+1}{j}_{Y} \cdot
\frac{(1-Y^{-2}q^{-j})}{(1-tY^{-2}q^{\ell+1-2j})}\cdot tq^{j+\ell+1-2j}
\nonumber \\
&= 
q^{\ell+1-j} \cdot
t^{\frac12(\ell+2)} \tilde D^{(\ell+1)}_j(Y)\cdot
\frac{(1-tq^{\ell+1})}{(1-tq^{\ell+1-j})}\cdot \frac{(1-q^j)}{(1-q^{\ell+1})}\cdot 
\frac{\cancel{(1-tY^{-2}q^{\ell+1-2j})} }{(1-tY^{-2}q^{\ell+1-j})}\cdot
\frac{(1-Y^{-2}q^{-j})}{\cancel{(1-tY^{-2}q^{\ell+1-2j})} }
\nonumber \\
&= 
q^{\ell+1-j} \cdot
t^{\frac12(\ell+2)} \tilde D^{(\ell+1)}_j(Y)\cdot
\frac{(1-tq^{\ell+1})}{(1-tq^{\ell+1-j})}\cdot \frac{(1-q^j)}{(1-q^{\ell+1})}\cdot 
\frac{(1-Y^{-2}q^{-j})}{(1-tY^{-2}q^{\ell+1-j})}.
\nonumber
\end{align}
Equation \eqref{DinvtoD} follows from
\eqref{DflippedB} and \eqref{Dup1}.
Using \eqref{DinvtoD} and \eqref{DtoK}
\begin{align*}
D^{(\ell-1)}_{\ell-j}(Y^{-1})
&=t q^{\ell-j} D^{(\ell-1)}_j(Y)\cdot
\frac{(1-q^j)}{(1-q^{\ell-j})}\frac{(1-t^{-1}Y^{-2}q^{\ell-2j})}{(1-tY^{-2}q^{\ell-2j})}\cdot
\frac{(1-Y^{-2}q^{-j})}{(1-Y^{-2}q^{\ell-j})}
\\
&=t^{\frac12} q^{\ell-j}\cdot K_j^{(\ell)}(Y)\cdot 
\frac{ \cancel{(1-q^{\ell-j})} }{(1-q^\ell)}\cdot
\frac{ \cancel{(1-Y^{-2}q^{\ell-j})} }{(1-Y^{-2}q^{\ell-2j})}\cdot
\frac{(1-Y^{-2})}{(1-t^{-1}Y^{-2})}
\\
&\qquad\cdot
\frac{(1-q^j)}{\cancel{(1-q^{\ell-j})} }\frac{(1-t^{-1}Y^{-2}q^{\ell-2j})}{(1-tY^{-2}q^{\ell-2j})}\cdot
\frac{(1-Y^{-2}q^{-j})}{\cancel{(1-Y^{-2}q^{\ell-j})} }
\end{align*}
which gives equation \eqref{DinvtoK}.
\end{proof}

\subsection{A recursion for the $\tilde D_j^{(\ell)}(Y)$}

\begin{prop} \label{tildeDrecursion} Let $\ell\in \ZZ_{\ge0}$ and let $\tilde D_j^{(\ell)}(Y)$ and $\tilde D_j^{(-\ell)}(Y)$ be as defined in \eqref{Ddefn}.
\item[(a)] If $\ell\in \ZZ_{>0}$ and $j\in \{0, \ldots, \ell\}$ then
$\tilde D_j^{(-\ell)}(Y) = t^{\frac12} \tilde D^{(\ell)}_j(Y^{-1}).$ 
\item[(b)]  
The $\tilde D_j^{(\ell)}(Y)$ satisfy, and are determined by, $\tilde D_0^{(0)}(Y)=t^{-\frac12}$ and the recursions
$$
\tilde D_0^{(\ell)}(Y) = t^{\frac12}\frac{(1-t^{-1}Y^{-2} q^{\ell})}{(1-Y^{-2} q^{\ell})}\tilde D_0^{(\ell-1)}(Y),
\qquad
\tilde D_\ell^{(\ell)}(Y) = t^{-\frac12} \frac{(1-t)(1-tY^{-2})}{(1-tq^{\ell})(1-Y^{-2}q^{-\ell})}\tilde D_0^{(\ell-1)}(Y^{-1}),
$$
and 
\begin{equation}
\tilde D_j^{(\ell)}(Y) = t^{\frac12} \frac{(1-t^{-1}Y^{-2} q^{\ell-2j})}{(1-Y^{-2} q^{\ell-2j})}\tilde D_j^{(\ell-1)}(Y)
+t^{-\frac12} \frac{(1-t)(1-q^\ell tY^{-2}q^{\ell-2j})}{(1-tq^{\ell})(1-Y^{-2}q^{\ell-2j})}\tilde D_{\ell-j}^{(\ell-1)}(Y^{-1}).
\label{tildeDrec}
\end{equation}
for $j\in \{1, \ldots, \ell-1\}$.
\end{prop}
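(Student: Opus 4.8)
The plan is to verify directly that the explicit products $\tilde D_j^{(\ell)}(Y)$ and $\tilde D_j^{(-\ell)}(Y)$ from \eqref{tildeDdefn} and \eqref{Dnegprod} satisfy the symmetry in (a) and the recursion in (b); the ``determined by'' clause is then automatic, since the recursion expresses every $\tilde D_j^{(\ell)}(Y)$ through $\tilde D^{(\ell-1)}$ data and so, together with $\tilde D_0^{(0)}(Y)=t^{-\frac12}$, fixes all the products by induction on $\ell$. The whole argument rests on the ratio identities \eqref{Dup1} and \eqref{DflippedB} (and the flip \eqref{Ybinflipped}) already proved in the preceding Proposition, so I never need to re-expand the products: I only rearrange known ratios and check one elementary polynomial identity at the end.

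For part (a) I would start from \eqref{Dnegprod} and rewrite its $Y$-binomial factor using \eqref{Ybinflipped} in the form $\genfrac(){0pt}{0}{\ell}{\ell-j}_{Y}=t^{\ell-2j}\genfrac(){0pt}{0}{\ell}{j}_{Y^{-1}}$. Collecting powers of $t$ and $q$ turns \eqref{Dnegprod} into $t^{\frac{\ell}{2}-j}q^{j}\genfrac[]{0pt}{0}{\ell}{j}_{q,t}\genfrac(){0pt}{0}{\ell}{j}_{Y^{-1}}\frac{(1-tq^{\ell-j})}{(1-tq^\ell)}\frac{(1-t^{-1}Y^{-2}q^{-(\ell-j)})}{(1-t^{-1}Y^{-2}q^{-(\ell-2j)})}$, whereas $t^{\frac12}\tilde D_j^{(\ell)}(Y^{-1})$ from \eqref{tildeDdefn} equals the same prefactor $t^{\frac{\ell}{2}-j}\genfrac[]{0pt}{0}{\ell}{j}_{q,t}\genfrac(){0pt}{0}{\ell}{j}_{Y^{-1}}\frac{(1-tq^{\ell-j})}{(1-tq^\ell)}$ times $\frac{(1-tY^{2}q^{\ell-j})}{(1-tY^{2}q^{\ell-2j})}$. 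So (a) collapses to the one-line identity $q^{j}\frac{(1-t^{-1}Y^{-2}q^{-(\ell-j)})}{(1-t^{-1}Y^{-2}q^{-(\ell-2j)})}=\frac{(1-tY^{2}q^{\ell-j})}{(1-tY^{2}q^{\ell-2j})}$, which I would obtain by pulling the factor $-t^{-1}Y^{-2}q^{-(\ell-j)}$ (resp.\ $-t^{-1}Y^{-2}q^{-(\ell-2j)}$) out of numerator and denominator.

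For part (b) the initial value $\tilde D_0^{(0)}(Y)=t^{-\frac12}$ is immediate from \eqref{tildeDdefn} at $\ell=j=0$. For the main recursion I would divide \eqref{tildeDrec} through by $\tilde D_j^{(\ell)}(Y)$, turning it into $1=t^{\frac12}\frac{(1-t^{-1}Y^{-2}q^{\ell-2j})}{(1-Y^{-2}q^{\ell-2j})}\cdot\frac{\tilde D_j^{(\ell-1)}(Y)}{\tilde D_j^{(\ell)}(Y)}+t^{-\frac12}\frac{(1-t)(1-tY^{-2}q^{2\ell-2j})}{(1-tq^\ell)(1-Y^{-2}q^{\ell-2j})}\cdot\frac{\tilde D_{\ell-j}^{(\ell-1)}(Y^{-1})}{\tilde D_j^{(\ell)}(Y)}$. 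The two ratios on the right are precisely \eqref{Dup1} and \eqref{DflippedB} with $\ell$ replaced by $\ell-1$. Substituting those closed forms, the factor $(1-t^{-1}Y^{-2}q^{\ell-2j})$ cancels in the first term and $(1-tq^\ell)$ cancels in the second, leaving a rational function of $Y^{-2},q,t$ whose two terms share the common denominator $(1-tq^{\ell-j})(1-q^{\ell})(1-Y^{-2}q^{\ell-2j})(1-tY^{-2}q^{\ell-j})$. The two boundary recursions are the $j=0$ and $j=\ell$ specializations of this same computation: at $j=0$ the term $\tilde D_{\ell}^{(\ell-1)}$ is out of range (hence absent) and \eqref{Dup1} alone gives the stated $\tilde D_0^{(\ell)}$ formula, while at $j=\ell$ the term $\tilde D_{\ell}^{(\ell-1)}(Y)$ is absent and \eqref{DflippedB} at $j=\ell$ gives the stated $\tilde D_\ell^{(\ell)}$ formula.

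The one genuine computation, and the main obstacle, is the resulting numerator identity
\[
\begin{aligned}
&(1-tq^{\ell})(1-q^{\ell-j})(1-tY^{-2}q^{\ell-2j})(1-Y^{-2}q^{\ell-j})\\
&\qquad +(1-t)q^{\ell-j}(1-q^{j})(1-tY^{-2}q^{2\ell-2j})(1-Y^{-2}q^{-j})\\
&\qquad\qquad =(1-tq^{\ell-j})(1-q^{\ell})(1-Y^{-2}q^{\ell-2j})(1-tY^{-2}q^{\ell-j}).
\end{aligned}
\]
Both sides are quadratic in the single variable $u=Y^{-2}$, so the identity is settled by matching the three coefficients of $u$ (equivalently, checking three values of $u$). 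For instance, at $u=0$ both sides reduce to $(1-q^{\ell})(1-tq^{\ell-j})$: writing $a=q^{\ell-j}$ and $b=q^{j}$, the left side becomes $(1-tab)(1-a)+(1-t)a(1-b)=(1-ab)(1-ta)$, and the coefficients of $u$ and $u^{2}$ are handled identically. This step is elementary but must be done carefully, since it is exactly where all the $q$- and $t$-shifts built into \eqref{tildeDdefn} and \eqref{Dnegprod} conspire to reproduce the Pascal-type recursion.
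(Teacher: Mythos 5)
Your proposal is correct and follows essentially the same route as the paper's proof: part (a) via the flip identity \eqref{Ybinflipped} converting between \eqref{tildeDdefn} and \eqref{Dnegprod}, and part (b) by treating the boundary cases as specializations of \eqref{Dup1} and \eqref{DflippedB}, then dividing the recursion by $\tilde D_j^{(\ell)}(Y)$, substituting those two ratio formulas, and reducing everything to the same four-factor numerator identity. The only cosmetic difference is the last step: the paper disposes of that identity by observing the numerator vanishes at the four roots of the denominator, while you match the three coefficients in $u=Y^{-2}$ (your $a,b$ computation for the constant term is exactly right, and the other two coefficients do check out the same way) --- both are valid verifications of the identical polynomial identity.
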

\begin{proof} (a)
Using \eqref{Ybinflipped},
\begin{align*}
t^{\ell-j}\cdot &\genfrac(){0pt}{0}{\ell}{j}_{Y^{-1}}\cdot \frac{(1-tY^{2}q^{\ell-j})}{(1-tY^{2}q^{\ell-2j})} 
=
t^{\ell-j}\cdot t^{-(\ell-2j)}\genfrac(){0pt}{0}{\ell}{\ell-j}_{Y}\cdot 
\frac{(1-t^{-1}Y^{-2}q^{-(\ell-j)})}{(1-t^{-1}Y^{-2}q^{-(\ell-2j)})}\cdot q^j 
\\
&=
(qt)^j \genfrac(){0pt}{0}{\ell}{\ell-j}_{Y}\cdot 
\frac{(1-t^{-1}Y^{-2}q^{-(\ell-j)})}{(1-t^{-1}Y^{-2}q^{-(\ell-2j)})}.
\end{align*}
Thus, by \eqref{tildeDdefn},
\begin{equation}
t^{\frac12}D_j^{(\ell)}(Y^{-1})
=t^{-\frac12\ell}\cdot (qt)^j\cdot 
\genfrac[]{0pt}{0}{\ell}{j}_{q,t}\cdot \frac{(1-tq^{\ell-j})}{(1-tq^\ell)}\cdot 
\genfrac(){0pt}{0}{\ell}{\ell-j}_{Y}\cdot 
\frac{(1-t^{-1}Y^{-2}q^{-(\ell-j)})}{(1-t^{-1}Y^{-2}q^{-(\ell-2j)})},
\label{DYinvexp}
\end{equation}
which is equal to 
$\tilde D_j^{(-\ell)}(Y)$ as defined in \eqref{Dnegprod}.

\smallskip\noindent
(b) The first two identities are special cases of \eqref{Dup1} and \eqref{DflippedB}.\hfil\break
Assume $j\in \{1, \ldots, \ell-1\}$.
From \eqref{tildeDdefn}, \eqref{Dup1} and \eqref{DflippedB},
\begin{align*}
t^{\frac12}\frac{\tilde D_j^{(\ell)}(Y)}{\tilde D_j^{(\ell+1)}(Y) }
=
&
\frac{(1-tq^{\ell+1})}{(1-tq^{\ell+1-j})} \cdot
\frac{(1-q^{\ell+1-j})}{(1-q^{\ell+1})}\cdot
\frac{(1-tY^{-2}q^{\ell+1-2j}) }{ (1-tY^{-2}q^{\ell+1-j})}\cdot
\frac{(1-Y^{-2}q^{\ell+1-j})}{(1-t^{-1}Y^{-2}q^{\ell+1-2j})}
\\
\hbox{and}\qquad
t^{-\frac12}\frac{\tilde D_{\ell+1-j}^{(\ell)}(Y^{-1})}{\tilde D_j^{(\ell+1)}(Y)}
&= q^{\ell+1-j}\cdot
\frac{(1-tq^{\ell+1})}{(1-tq^{\ell+1-j})}\cdot \frac{(1-q^j)}{(1-q^{\ell+1})}\cdot 
\frac{(1-Y^{-2}q^{-j})}{(1-tY^{-2}q^{\ell+1-j})}.
\end{align*}
Thus
\begin{align*}
&t^{\frac12}\frac{(1-t^{-1}Y^{-2} q^{\ell+1-2j}) }{(1-Y^{-2} q^{\ell+1-2j})}
\frac{\tilde D_j^{(\ell)}(Y)}{\tilde D_j^{(\ell+1)}(Y)}
+t^{-\frac12} \frac{(1-t)(1-q^{\ell+1}tY^{-2} q^{\ell+1-2j})}{(1- t q^{\ell+1})(1-Y^{-2} q^{\ell+1-2j})}
\frac{\tilde D_{\ell+1-j}^{(\ell)}(Y^{-1})}{\tilde D_j^{(\ell+1)}(Y)}
\\
&\quad=\frac{\cancel{(1-t^{-1}Y^{-2} q^{\ell+1-2j})} }{(1-Y^{-2} q^{\ell+1-2j})}
\frac{(1-tq^{\ell+1})(1-q^{\ell+1-j})}{(1-tq^{\ell+1-j})(1-q^{\ell+1})}
\frac{(1-tY^{-2} q^{\ell+1-2j})}{(1-t Y^{-2} q^{\ell+1-j})}
\frac{(1-Y^{-2} q^{\ell+1-j})}{\cancel{(1-t^{-1}Y^{-2}q^{\ell+1-2j})} }
\\
&\quad\qquad
+
\frac{(1-t)(1-q^{\ell+1}tY^{-2}q^{\ell+1-2j})}{\cancel{(1-t q^{\ell+1})}(1-Y^{-2} q^{\ell+1-2j})}
q^{\ell+1-2j}
\frac{ \cancel{(1-tq^{\ell+1})} (1-q^j) }{(1-tq^{\ell+1-j})(1-q^{\ell+1})}
\frac{(1-Y^{-2} q^{-j})}{(1-t Y^{-2} q^{\ell+1-j})} ,
\end{align*}
which is equal to
$$\frac{\left(\begin{array}{l}
(1-q^{\ell+1-j})(1-tq^{\ell+1}) (1-Y^{-2} q^{\ell+1-j})(1-t Y^{-2} q^{\ell+1-2j}) 
\\
\qquad
+
q^{\ell+1-j}
(1-t)(1-q^j)
(1-t Y^{-2} q^{2(\ell+1-j)})(1-Y^{-2} q^{-j})
\end{array}
\right)}
{ (1-tq^{\ell+1-j})(1-q^{\ell+1})  (1-Y^{-2} q^{\ell+1-2j})(1-t Y^{-2} q^{\ell+1-j}) }
=1,
$$
since the numerator has roots at $tq^{\ell+1-j}=1$, $q^{\ell+1}=1$, $Y^{-2}q^{\ell+1-2j}=1$
and $tY^{-2}q^{\ell+1-j}=1$.
\end{proof}

\subsection{The $\tilde K_j^{(\ell)}(Y)$ in terms of the $\tilde D_j^{(\ell)}(Y)$}

\begin{prop} \label{tildeKfromD}  Let $\tilde K_j^{(\ell)}(Y)$ and $\tilde D_j^{(\ell-1)}(Y)$ be as defined in 
\eqref{tildeKdefn} and \eqref{tildeDdefn}.  Then
$$\tilde K_0^{(\ell)}(Y) = t^{\frac12} \tilde D_0^{(\ell-1)}(Y) \frac{(1-t^{-1}Y^{-2})}{(1-Y^{-2})},
\qquad
\tilde K_\ell^{(\ell)}(Y) = t^{-\frac12}\tilde D_0^{(\ell-1)}(Y^{-1})
\frac{(1-tY^{-2}q^{-\ell})} {(1-t^{-1}Y^{-2}q^{-\ell})}
\frac{(1-t^{-1}Y^{-2}) }{ (1-Y^{-2}) },
$$
and, for $j\in \{1, \ldots, \ell-1\}$,
\begin{align}
\tilde K_j^{(\ell)}(Y) 
&= t^{\frac12} \tilde D_j^{(\ell-1)}(Y)\frac{(1-t^{-1}Y^{-2}) }{ (1-Y^{-2})}
+ t^{-\frac12} \tilde D_{\ell-j}^{(\ell-1)}(Y^{-1}) \frac{(1-tY^{-2} q^{\ell-2j})}{ (1-t^{-1}Y^{-2} q^{\ell-2j})}
\frac{(1-t^{-1}Y^{-2})}{(1-Y^{-2})}.
\label{tildeKrec}
\end{align}
\end{prop}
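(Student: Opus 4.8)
The plan is to divide the claimed relation \eqref{tildeKrec} through by $\tilde K_j^{(\ell)}(Y)$ and substitute the two ratios already computed in \eqref{DtoK} and \eqref{DinvtoK}, so that the entire statement collapses to a single scalar identity. Concretely, for $j\in\{1,\ldots,\ell-1\}$, dividing \eqref{tildeKrec} by $\tilde K_j^{(\ell)}(Y)$ turns the claim into the assertion that
$$t^{\frac12}\frac{\tilde D_j^{(\ell-1)}(Y)}{\tilde K_j^{(\ell)}(Y)}\frac{(1-t^{-1}Y^{-2})}{(1-Y^{-2})} + t^{-\frac12}\frac{\tilde D_{\ell-j}^{(\ell-1)}(Y^{-1})}{\tilde K_j^{(\ell)}(Y)}\frac{(1-tY^{-2}q^{\ell-2j})}{(1-t^{-1}Y^{-2}q^{\ell-2j})}\frac{(1-t^{-1}Y^{-2})}{(1-Y^{-2})} = 1.$$
Substituting \eqref{DtoK} into the first summand causes the $t^{\pm\frac12}$ and the factor $\frac{(1-t^{-1}Y^{-2})}{(1-Y^{-2})}$ to cancel, leaving $\frac{(1-q^{\ell-j})}{(1-q^\ell)}\frac{(1-Y^{-2}q^{\ell-j})}{(1-Y^{-2}q^{\ell-2j})}$. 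Substituting \eqref{DinvtoK} into the second summand cancels the $t^{\pm\frac12}$, the factor $\frac{(1-t^{-1}Y^{-2})}{(1-Y^{-2})}$, and—against the factor attached in \eqref{tildeKrec}—the factor $\frac{(1-tY^{-2}q^{\ell-2j})}{(1-t^{-1}Y^{-2}q^{\ell-2j})}$, leaving $q^{\ell-j}\frac{(1-q^j)}{(1-q^\ell)}\frac{(1-Y^{-2}q^{-j})}{(1-Y^{-2}q^{\ell-2j})}$.

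Thus the whole assertion reduces to the scalar identity
$$\frac{(1-q^{\ell-j})(1-Y^{-2}q^{\ell-j}) + q^{\ell-j}(1-q^j)(1-Y^{-2}q^{-j})}{(1-q^\ell)(1-Y^{-2}q^{\ell-2j})} = 1,$$
which I would verify by clearing the denominator and expanding: writing $A=Y^{-2}$, the numerator expands to $1 + Aq^{2(\ell-j)} - Aq^{\ell-2j} - q^\ell$, and the denominator expands to the same Laurent polynomial, so the two agree. This is the only genuine computation in the argument, and it is elementary.

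The two boundary identities follow directly by specializing the already-established ratios. Setting $j=0$ in \eqref{DtoK} makes the $q^{\ell}$-dependent factors cancel and gives $\tilde D_0^{(\ell-1)}(Y)/\tilde K_0^{(\ell)}(Y) = t^{-\frac12}\frac{(1-Y^{-2})}{(1-t^{-1}Y^{-2})}$, which rearranges to the stated formula for $\tilde K_0^{(\ell)}(Y)$. Setting $j=\ell$ in \eqref{DinvtoK} makes the factor $\frac{(1-Y^{-2}q^{-\ell})}{(1-Y^{-2}q^{-\ell})}$ equal to $1$ and gives $\tilde D_0^{(\ell-1)}(Y^{-1})/\tilde K_\ell^{(\ell)}(Y) = t^{\frac12}\frac{(1-Y^{-2})}{(1-t^{-1}Y^{-2})}\frac{(1-t^{-1}Y^{-2}q^{-\ell})}{(1-tY^{-2}q^{-\ell})}$, which rearranges to the stated formula for $\tilde K_\ell^{(\ell)}(Y)$.

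The main obstacle, such as it is, is bookkeeping: keeping track of exactly which factors cancel when the two ratios are substituted, and confirming the final scalar identity. There is no conceptual difficulty once \eqref{DtoK} and \eqref{DinvtoK} are in hand, since those already package all the binomial and $Y$-binomial manipulations.
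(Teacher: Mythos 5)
Your proposal is correct and takes essentially the same route as the paper: the boundary cases are handled there too as direct specializations of \eqref{DtoK} and \eqref{DinvtoK}, and for $j\in\{1,\ldots,\ell-1\}$ the paper reduces the two-term sum to exactly the same elementary rational-function identity you verify, differing only in that it normalizes by $\tilde D_j^{(\ell)}(Y)$ (citing \eqref{DinvtoD} and then \eqref{DtoK}) rather than dividing by $\tilde K_j^{(\ell)}(Y)$ (citing \eqref{DtoK} and \eqref{DinvtoK}). The expansion and factorization of the numerator $1+Y^{-2}q^{2(\ell-j)}-Y^{-2}q^{\ell-2j}-q^{\ell}$ is the same computation in both arguments.
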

\begin{proof}  The first two identities are special cases of \eqref{DtoK} and \eqref{DinvtoK}.\hfil\break
Let $j\in \{1, \ldots, \ell-1\}$.
Using \eqref{DinvtoD} gives
\begin{align*}
1&+t^{-1} \frac{\tilde D_{\ell+1-j}^{(\ell)}(Y^{-1}) }{\tilde D_j^{(\ell)}(Y)}\cdot
\frac{(1-tY^{-2} q^{\ell+1-2j}) }{ (1-t^{-1}Y^{-2} q^{\ell+1-2j})}
= 1 + q^{\ell+1-j} \frac{(1-q^j)}{(1-q^{\ell+1-j})}\frac{(1-Y^{-2}q^{-j})}{(1-Y^{-2}q^{\ell+1-j})}
\\
&=
\frac{(1-\cancel{Y^{-2}q^{\ell+1-j} } -\cancel{q^{\ell+1-j}} +Y^{-2}q^{2(\ell+1-j)}
+\cancel{q^{\ell+1-j}}-q^{\ell+1}-Y^{-2}q^{\ell+1-2j}+\cancel{Y^{-2}q^{\ell+1-j} }) }
{(1-q^{\ell+1-j})(1-Y^{-2}q^{\ell+1-j})}
\\
&=
\frac{(1-q^{\ell+1})(1-Y^{-2}q^{\ell+1-2j})}
{(1-q^{\ell+1-j})(1-Y^{-2}q^{\ell+1-j})}.
\end{align*}
Multiplying both sides by $t^{\frac12}\tilde D_j^{(\ell)}(Y)\frac{(1-t^{-1}Y^{-2})}{(1-Y^{-2})}$ gives
\begin{align*}
&t^{\frac12} \tilde D_j^{(\ell)}(Y)\frac{(1-t^{-1}Y^{-2}) }{ (1-Y^{-2})}
+ t^{-\frac12} \tilde D_{\ell+1-j}^{(\ell)}(Y^{-1}) \frac{(1-tY^{-2} q^{\ell+1-2j})}{ (1-t^{-1}Y^{-2} q^{\ell+1-2j})}
\frac{(1-t^{-1}Y^{-2})}{(1-Y^{-2})}
\\
&\qquad
=t^{\frac12} \tilde D_j^{(\ell)}(Y)\cdot 
\frac{(1-q^{\ell+1})(1-Y^{-2}q^{\ell+1-2j})}
{(1-q^{\ell+1-j})(1-Y^{-2}q^{\ell+1-j})}\cdot
\frac{(1-t^{-1}Y^{-2})}{(1-Y^{-2})}
=\tilde K_j^{(\ell+1)},
\end{align*}
where the last equality is \eqref{DtoK}.
\end{proof}

\begin{thm}\label{DandKresult}
As in \eqref{Ddefn} and \eqref{Kdefn}, let $D_j^{(\ell)}(Y)$ and $K_j^{(\ell)}(Y)$ be defined by the 
expansions
$$
E_\ell(X)\mathbf{1}_0
= \sum_{j=0}^{\ell-1} \eta^{\ell-2j} D^{(\ell-1)}_j(Y)\mathbf{1}_0
\qquad\hbox{and}\qquad
E_{-\ell}(X)\mathbf{1}_0
=\sum_{j=0}^\ell \eta^{-\ell+2j} D_j^{(-\ell)}(Y)\mathbf{1}_0
$$
$$
\hbox{and}\qquad
\mathbf{1}_0 E_\ell(X)\mathbf{1}_0 = \sum_{j=0}^\ell \mathbf{1}_0 \eta^{\ell-2j}K_j^{(\ell)}(Y)
$$
in the localized double affine Hecke algebra $\tilde H$.
Then
\begin{align*}
D_j^{(\ell)}(Y) &= t^{-\frac12(\ell+1)}\cdot
t^{\ell-j} \cdot 
\genfrac[]{0pt}{0}{\ell}{j}_{q,t}\cdot 
\genfrac(){0pt}{0}{\ell}{j}_{Y}\cdot 
\frac{(1-tq^{\ell-j})}{(1-tq^\ell)}\cdot 
\frac{(1-tY^{-2}q^{\ell-j})}{(1-tY^{-2}q^{\ell-2j})},
\\
D_j^{(-\ell)}(Y)
&=t^{-\frac12\ell}\cdot (qt)^j\cdot 
\genfrac[]{0pt}{0}{\ell}{j}_{q,t}\cdot 
\genfrac(){0pt}{0}{\ell}{\ell-j}_{Y}\cdot 
\frac{(1-tq^{\ell-j})}{(1-tq^\ell)}\cdot 
\frac{(1-t^{-1}Y^{-2}q^{-(\ell-j)})}{(1-t^{-1}Y^{-2}q^{-(\ell-2j)})}
\quad\hbox{and}
\\
K_j^{(\ell)}(Y) 
&= t^{-\frac12(\ell-1)}\cdot 
t^{\ell-1-j}\cdot
\genfrac[]{0pt}{0}{\ell}{j}_{q,t} \cdot
\genfrac(){0pt}{0}{\ell}{j}_{Y} \cdot
\frac{(1-Y^{-2}q^{\ell-2j})}{(1-t^{-1}Y^{-2}q^{\ell-2j})}\cdot
\frac{(1-t^{-1}Y^{-2})}{(1-Y^{-2})}.
\end{align*}
\end{thm}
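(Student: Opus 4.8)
The plan is to prove Theorem \ref{DandKresult} by a uniqueness argument, showing that the explicitly-defined products $\tilde D_j^{(\ell)}(Y)$, $\tilde D_j^{(-\ell)}(Y)$ and $\tilde K_j^{(\ell)}(Y)$ from \eqref{tildeDdefn}, \eqref{Dnegprod} and \eqref{tildeKdefn} coincide with the coefficients $D_j^{(\ell)}(Y)$, $D_j^{(-\ell)}(Y)$ and $K_j^{(\ell)}(Y)$ defined by the operator expansions \eqref{Ddefn} and \eqref{Kdefn}. The key structural point is that Proposition \ref{Drecursion} does not merely exhibit a recursion but asserts that the $D_j^{(\ell)}(Y)$ are \emph{determined} by it together with the single initial value $D_0^{(0)}(Y)=t^{-\frac12}$. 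Thus the whole theorem reduces to matching initial data and recursions.

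First I would make the uniqueness explicit. In Proposition \ref{Drecursion}(b) the two boundary formulas express $D_0^{(\ell)}(Y)$ and $D_\ell^{(\ell)}(Y)$ in terms of $D_0^{(\ell-1)}$ alone, while the interior relation \eqref{Drec} expresses $D_j^{(\ell)}(Y)$, for $1\le j\le \ell-1$, in terms of $D_j^{(\ell-1)}(Y)$ and $D_{\ell-j}^{(\ell-1)}(Y^{-1})$. Hence, by induction on $\ell$, every entry of the Pascal-like triangle of the $D_j^{(\ell)}(Y)$ is determined once $D_0^{(0)}(Y)=t^{-\frac12}$ is fixed, with the cases $j=0$ and $j=\ell$ genuinely supplied by the special boundary forms rather than by the interior recursion.

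Next I would invoke Proposition \ref{tildeDrecursion}, which establishes that the explicit products $\tilde D_j^{(\ell)}(Y)$ satisfy the same initial condition $\tilde D_0^{(0)}(Y)=t^{-\frac12}$ and obey the recursion \eqref{tildeDrec}, which is word-for-word the recursion \eqref{Drec}. By the uniqueness just recorded this forces $D_j^{(\ell)}(Y)=\tilde D_j^{(\ell)}(Y)$ for all $\ell\in\ZZ_{\ge0}$ and $j\in\{0,\ldots,\ell\}$, which is the first formula of the theorem. The formula for $D_j^{(-\ell)}(Y)$ then follows at once by comparing part (a) of Proposition \ref{Drecursion}, giving $D_j^{(-\ell)}(Y)=t^{\frac12}D_j^{(\ell)}(Y^{-1})$, with part (a) of Proposition \ref{tildeDrecursion}, giving the identical identity for the tildes. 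For the $K_j^{(\ell)}(Y)$, I would place Proposition \ref{KfromD} beside Proposition \ref{tildeKfromD}: each $K_j^{(\ell)}(Y)$ is a fixed rational combination of $D_j^{(\ell-1)}(Y)$ and $D_{\ell-j}^{(\ell-1)}(Y^{-1})$, and $\tilde K_j^{(\ell)}(Y)$ is \emph{the same} combination of $\tilde D_j^{(\ell-1)}(Y)$ and $\tilde D_{\ell-j}^{(\ell-1)}(Y^{-1})$; since we have already identified $D=\tilde D$, these agree term by term, yielding $K_j^{(\ell)}(Y)=\tilde K_j^{(\ell)}(Y)$.

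I expect that the proof of Theorem \ref{DandKresult} itself is essentially bookkeeping, because all the genuine content has been front-loaded into the two auxiliary propositions. The real obstacle in the development, already dispatched in Proposition \ref{tildeDrecursion}(b), is checking that the product ansatz \eqref{tildeDdefn} satisfies the interior recursion: there the quotient identities \eqref{Dup1} and \eqref{DflippedB} reduce the verification to showing that an explicit rational function in $Y$ equals $1$, which holds because its numerator vanishes at the four loci $tq^{\ell+1-j}=1$, $q^{\ell+1}=1$, $Y^{-2}q^{\ell+1-2j}=1$ and $tY^{-2}q^{\ell+1-j}=1$ where the denominator vanishes, and because the two sides have matching degree in $Y^{-2}$. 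The one point I would be careful about in writing the final proof is to confirm that the boundary values $j=0$ and $j=\ell$ are covered by the special forms, so that the inductive identification of $D$ with $\tilde D$ has no gap.
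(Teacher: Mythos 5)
Your proposal is correct and is essentially the paper's own (implicit) argument: the paper states Theorem \ref{DandKresult} without a separate proof precisely because it follows by matching the initial value $t^{-\frac12}$ and the identical recursions in Propositions \ref{Drecursion} and \ref{tildeDrecursion} (giving $D_j^{(\ell)}=\tilde D_j^{(\ell)}$, hence $D_j^{(-\ell)}=\tilde D_j^{(-\ell)}$ via the part (a) identities), and then comparing Propositions \ref{KfromD} and \ref{tildeKfromD} to get $K_j^{(\ell)}=\tilde K_j^{(\ell)}$. Your added care about the boundary cases $j=0$ and $j=\ell$ being supplied by the special forms, and the uniqueness of the recursion's solution, makes explicit exactly the bookkeeping the paper leaves to the reader.
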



\section{Products for type $SL_2$ Macdonald polynomials}

The formulas for $E_\ell(X)\mathbf{1}_0$ and $E_{-\ell}(X)\mathbf{1}_0$ in Theorem \ref{DandKresult}
serve as universal formulas for
products, containing, for all $m$ at once, the information of the products $E_\ell P_m$ and $E_{-\ell}P_m$
expanded in terms of electronic Macdonald polynomials.   In the same way the expansion of 
$\mathbf{1}_0 E_\ell(X)\mathbf{1}_0$ in $\tilde H$ which is given in Theorem \ref{DandKresult} is a universal formula for
the products $P_\ell P_m$ expanded in terms of the bosonic Macdonald polynomials $P_r(x)$.
In this section we use the results of Theorem \ref{DandKresult} to derive these products, thus accomplishing our goal,
in the $SL_2$ case, of
using double affine Hecke algebra tools to compute compact formulas for products of Macdonald polynomials.

\subsection{The universal coefficients $A_j^{(\ell)}(Y)$, $B_j^{(\ell)}(Y)$ and $C_j^{(\ell)}(Y)$}

Define
\begin{align*}
C_j^{(\ell)}(Y) 
&= \genfrac[]{0pt}{0}{\ell}{j}_{q,t} \cdot 
\frac{(t^{-1}Y^{-2}q^{-(j-1)};q)_j (tY^{-2} q^{\ell-2j};q)_j}
{(Y^{-2}q^{\ell-2j+1};q)_j (Y^{-2} q^{-j};q)_j}
\\
A_j^{(\ell)}(Y) &= 
\genfrac[]{0pt}{0}{\ell}{j}_{q,t}\cdot \frac{(1-q^{\ell-j})}{(1-q^{\ell})}\cdot 
\frac{(t^{-1}Y^{-2}q^{-(j-1)};q)_j (tY^{-2}q^{\ell-2j};q)_{j} }{ (Y^{-2}q^{-j};q)_j (Y^{-2}q^{\ell-2j};q)_j}
\\
B_j^{(\ell)}(Y) &= q^j\cdot
\genfrac[]{0pt}{0}{\ell}{j}_{q,t}\cdot \frac{(1-q^{\ell-j})}{(1-q^{\ell})}\cdot 
\frac{(t^{-1}Y^{-2}q^{-(\ell-j-1)};q)_{\ell-j} (tY^{-2}q^{-(\ell-2j-1)};q)_{\ell-j-1} }
{ (Y^{-2}q^{-(\ell-2j-1)};q)_{\ell-j} (Y^{-2}q^{-(\ell-j-1)};q)_{\ell-j-1} }
\end{align*}
Then
\begin{align}
A_j^{(\ell)}(Y) &= 
C_j^{(\ell)}(Y)\cdot 
\frac{(1-q^{\ell-j})}{(1-q^\ell)}\cdot \frac{(1-Y^{-2}q^{\ell-j})}{(1-Y^{-2}q^{\ell-2j})}
\qquad\hbox{and}
\label{CtoA}
\\
B_{\ell-j}^{(\ell)}(Y)
&=q^{\ell-j} \cdot
\genfrac[]{0pt}{0}{\ell}{j}_{q,t} \cdot  \frac{(1-q^{j})}{(1-q^{\ell})}\cdot 
\frac{(t^{-1}Y^{-2}q^{-(j-1)};q)_j (tY^{-2}q^{\ell-2j+1};q)_{j-1}}
{(Y^{-2}q^{\ell-2j+1};q)_j (Y^{-2}q^{-(j-1)};q)_{j-1} }
\nonumber \\
&=C_j^{(\ell)}(Y)\cdot  q^{\ell-j}\cdot \frac{(1-q^{j})}{(1-q^{\ell})}\cdot 
\frac{(1-Y^{-2}q^{-j})}{(1-tY^{-2}q^{\ell-2j})}
\label{CtoB}
\end{align}
The following proposition gives formulas for $A_j^{(\ell)}$, $B_j^{(\ell)}$ and $C_j^{(\ell)}$ in terms of 
$D_j^{(\ell)}$ and $K_j^{(\ell)}$.

\begin{prop} \label{ABCtoDK}
\begin{equation}
t^{-\frac12}C_j^{(\ell)}(Y)
=
K_j^{(\ell)}(Y)\cdot
t^{-\frac12(\ell-2j)}\cdot
\frac{(t^{-1}Y^{-2}q^{\ell-2j};q)_j}{(Y^{-2}q^{\ell-2j};q)_j}
\cdot \frac{(Y^{-2};q)_{\ell-j}}{(t^{-1}Y^{-2};q)_{\ell-j}}.
\label{KtoC}
\end{equation}
\begin{equation}
A_j^{(\ell)}(Y)
=D_j^{(\ell-1)}(Y) \cdot
t^{-\frac12(\ell-2j)}\cdot
\frac{(t^{-1}Y^{-2}q^{\ell-2j};q)_j}{(Y^{-2}q^{\ell-2j};q)_j}
\cdot \frac{(Y^{-2};q)_{\ell-j}}{(t^{-1}Y^{-2};q)_{\ell-j}} \cdot
t \frac{(1-t^{-1}Y^{-2})}{(1-Y^{-2})} 
\label{DtoA}
\end{equation}
\begin{equation}
B_j^{(\ell)}(Y)
=
D_j^{(\ell-1)}(Y^{-1})
t^{\frac12(\ell-2j)}\cdot
\frac{(t^{-1}Y^{-2}q^{-(\ell-2j)+1};q)_{\ell-j} }{(Y^{-2}q^{-(\ell-2j)+1};q)_{\ell-j} }\cdot
\frac{(Y^{-2}q;q)_j }{(t^{-1}Y^{-2}q;q)_j}
\label{DinvtoB}
\end{equation}
\end{prop}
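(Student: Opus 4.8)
The three identities have a common shape: the left sides $A_j^{(\ell)}$, $B_j^{(\ell)}$, $C_j^{(\ell)}$ are their defining formulas (built from $\genfrac[]{0pt}{0}{\ell}{j}_{q,t}$ and ratios of $q$-shifted factorials), while the right sides are built from $D_j^{(\ell-1)}(Y^{\pm1})$ and $K_j^{(\ell)}(Y)$, whose explicit product forms are supplied by Theorem~\ref{DandKresult}. So the plan is to substitute these known closed forms into both sides and check that everything cancels. To keep the $q$-Pochhammer bookkeeping under control, I would prove \eqref{KtoC} by a direct computation and then deduce \eqref{DtoA} and \eqref{DinvtoB} from \eqref{KtoC} using the ratio identities \eqref{CtoA}, \eqref{CtoB}, \eqref{DtoK}, \eqref{DinvtoK} already in hand, rather than expanding all three from scratch.

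For \eqref{KtoC} I would substitute the formula for $K_j^{(\ell)}(Y)$ from Theorem~\ref{DandKresult} and the definition of $C_j^{(\ell)}(Y)$, and replace $\genfrac(){0pt}{0}{\ell}{j}_{Y}$ by its product form \eqref{Ybin}. The factor $\genfrac[]{0pt}{0}{\ell}{j}_{q,t}$ is common to both sides and cancels at once. The prefactor powers of $t$ collapse to $t^{-\frac12}$, since $-\tfrac12(\ell-1)+(\ell-1-j)-\tfrac12(\ell-2j)=-\tfrac12$, matching the $t^{-\frac12}$ on the left of \eqref{KtoC}. What remains is an identity among products of $q$-shifted factorials in $Y^{-2}$; after cancelling the shared blocks $(tY^{-2}q^{\ell-2j};q)_j$ and $(Y^{-2}q^{-j};q)_j$, one verifies that the surviving $t^{-1}Y^{-2}$-factors and $Y^{-2}$-factors telescope, by matching each range of $q$-shifts against the single factors $\frac{(1-Y^{-2}q^{\ell-2j})}{(1-t^{-1}Y^{-2}q^{\ell-2j})}$ and $\frac{(1-t^{-1}Y^{-2})}{(1-Y^{-2})}$ contributed by $K_j^{(\ell)}$. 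This reduces \eqref{KtoC} to an equality of multisets of $q$-shifts, which holds.

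With \eqref{KtoC} in hand, \eqref{DtoA} follows algebraically. By \eqref{CtoA} one has $A_j^{(\ell)}(Y)=C_j^{(\ell)}(Y)\cdot\frac{(1-q^{\ell-j})}{(1-q^\ell)}\cdot\frac{(1-Y^{-2}q^{\ell-j})}{(1-Y^{-2}q^{\ell-2j})}$; substituting $C_j^{(\ell)}=t^{\frac12}\cdot(\text{right side of \eqref{KtoC}})$ and then invoking \eqref{DtoK} (with $D_j^{(\ell-1)}=\tilde D_j^{(\ell-1)}$ and $K_j^{(\ell)}=\tilde K_j^{(\ell)}$ from Theorem~\ref{DandKresult}) to rewrite $K_j^{(\ell)}(Y)\cdot\frac{(1-q^{\ell-j})}{(1-q^\ell)}\cdot\frac{(1-Y^{-2}q^{\ell-j})}{(1-Y^{-2}q^{\ell-2j})}=t^{\frac12}D_j^{(\ell-1)}(Y)\cdot\frac{(1-t^{-1}Y^{-2})}{(1-Y^{-2})}$ collapses the right side exactly to \eqref{DtoA}. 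For \eqref{DinvtoB} I would run the same argument through the reflected index: use \eqref{CtoB} at index $\ell-j$ to write $B_j^{(\ell)}$ in terms of $C_{\ell-j}^{(\ell)}$, apply \eqref{KtoC} at index $\ell-j$ to pass to $K_{\ell-j}^{(\ell)}$, and finally apply \eqref{DinvtoK} at index $\ell-j$, which relates $K_{\ell-j}^{(\ell)}(Y)$ to $D_j^{(\ell-1)}(Y^{-1})$, to reach the stated form.

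The main obstacle is purely the $q$-Pochhammer and index bookkeeping rather than any conceptual difficulty: in \eqref{KtoC} one must correctly match the lengths and starting shifts of the four $q$-shifted factorials against the extra single factors, and in \eqref{DinvtoB} one must track the reflection $j\leftrightarrow\ell-j$ consistently through the substitutions, noting that it turns $Y$ into $Y^{-1}$ and sends $q^{\ell-2j}\mapsto q^{-(\ell-2j)}$, which is precisely what produces the $D_j^{(\ell-1)}(Y^{-1})$ on the right of \eqref{DinvtoB}. Alternatively, all three identities can be checked by a single fully direct substitution of the Theorem~\ref{DandKresult} formulas together with \eqref{qtbinrels} and \eqref{Ybinup1} to reconcile the $\ell$- and $(\ell-1)$-indexed binomials, but the route above isolates the only genuine computation in \eqref{KtoC}.
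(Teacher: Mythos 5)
Your proposal is correct and follows essentially the same route as the paper's proof: the paper also establishes \eqref{KtoC} by direct substitution of the product formula for $K_j^{(\ell)}(Y)$ and $q$-Pochhammer cancellation, then obtains \eqref{DtoA} from \eqref{DtoK}, \eqref{KtoC}, \eqref{CtoA}, and \eqref{DinvtoB} from \eqref{DinvtoK}, \eqref{KtoC}, \eqref{CtoB} with the reflection $j\leftrightarrow\ell-j$. The only differences are cosmetic (you traverse the chains from $A_j^{(\ell)}$, $B_j^{(\ell)}$ toward $D_j^{(\ell-1)}$ rather than the reverse, and apply the index reflection upfront rather than at the end), so your argument matches the paper's.
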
  
\begin{proof}
\smallskip\noindent
(a) Using \eqref{tildeKdefn} gives
\begin{align*}
&K_j^{(\ell)}(Y)\cdot
t^{-\frac12(\ell-2j)}\cdot
\frac{(t^{-1}Y^{-2}q^{\ell-2j};q)_j}{(Y^{-2}q^{\ell-2j};q)_j}
\cdot \frac{(Y^{-2};q)_{\ell-j}}{(t^{-1}Y^{-2};q)_{\ell-j}}
\\
&=t^{-\frac12(\ell-1)}\cdot t^{\ell-1-j}\cdot \genfrac[]{0pt}{0}{\ell}{j}_{q,t} \cdot 
\genfrac(){0pt}{0}{\ell}{j}_{Y} \cdot 
\frac{(1-Y^{-2}q^{\ell-2j})}{(1-t^{-1}Y^{-2}q^{\ell-2j})}\cdot \frac{(1-t^{-1}Y^{-2})}{(1-Y^{-2})}
\\
&\qquad
\cdot t^{-\frac12(\ell-2j)}\cdot 
\frac{(t^{-1}Y^{-2}q^{\ell-2j};q)_j}{(Y^{-2}q^{\ell-2j};q)_j}
\cdot \frac{(Y^{-2};q)_{\ell-j}}{(t^{-1}Y^{-2};q)_{\ell-j}}
\\
&=t^{-\frac12} \genfrac[]{0pt}{0}{\ell}{j}_{q,t} \cdot 
\frac{(t^{-1}Y^{-2} q^{-(j-1)};q)_{\ell-j} (tY^{-2}q^{\ell-2j};q)_j}{  (Y^{-2} q;q)_{\ell-j} (Y^{-2}q^{-j};q)_j }
\cdot
\frac{(t^{-1}Y^{-2}q^{\ell-2j};q)_j}{(Y^{-2}q^{\ell-2j};q)_j}
\cdot \frac{(Y^{-2};q)_{\ell-j}}{(t^{-1}Y^{-2};q)_{\ell-j}}
\\
&\qquad\cdot
\frac{(1-Y^{-2}q^{\ell-2j})}{(1-t^{-1}Y^{-2}q^{\ell-2j})}\cdot \frac{(1-t^{-1}Y^{-2})}{(1-Y^{-2})}
\\
&=t^{-\frac12} \genfrac[]{0pt}{0}{\ell}{j}_{q,t} \cdot 
\frac{(t^{-1}Y^{-2}q^{-(j-1)};q)_{\ell-j} (1-t^{-1}Y^{-2}) (t^{-1}Y^{-2}q^{\ell-2j};q)_j}{(t^{-1}Y^{-2};q)_{\ell-j} (1-t^{-1}Y^{-2}q^{\ell-2j})} \\
&\qquad\qquad \cdot
\frac{ (Y^{-2};q)_{\ell-j}}{(1-Y^{-2})(Y^{-2}q;q)_{\ell-j} } \cdot
\frac{ (1-Y^{-2}q^{\ell-2j}) }{  (Y^{-2}q^{\ell-2j};q)_j }\cdot \frac{1}{(Y^{-2}q^{-j};q)_j } \cdot  (tY^{-2}q^{\ell-2j};q)_j
\\
&=t^{-\frac12} \genfrac[]{0pt}{0}{\ell}{j}_{q,t} \cdot 
\frac{(t^{-1}Y^{-2}q^{-(j-1)};q)_{\ell-j-1}  (t^{-1}Y^{-2}q^{\ell-2j};q)_j} {(t^{-1}Y^{-2}q;q)_{\ell-j-1} } \\
&\qquad\qquad \cdot
\frac{1}{(1-Y^{-2}q^{\ell-j})} \cdot
\frac{ 1 }{  (Y^{-2}q^{\ell-2j+1};q)_{j-1} }\cdot \frac{1}{(Y^{-2}q^{-j};q)_j } \cdot  (tY^{-2}q^{\ell-2j};q)_j
\\
&=t^{-\frac12}\genfrac[]{0pt}{0}{\ell}{j}_{q,t} \cdot 
\frac{(t^{-1}Y^{-2}q^{-(j-1)};q)_j (tY^{-2} q^{\ell-2j};q)_j}
{(Y^{-2}q^{\ell-2j+1};q)_j (Y^{-2} q^{-j};q)_j}
= t^{-\frac12} C_j^{(\ell)}.
\end{align*}
(b) Using \eqref{DtoK} and \eqref{KtoC} and \eqref{CtoA}, gives
\begin{align*}
D_j^{(\ell-1)}(Y)\cdot &t^{-\frac12(\ell-2j)}\cdot
\frac{(t^{-1}Y^{-2}q^{\ell-2j};q)_j}{(Y^{-2}q^{\ell-2j};q)_j}
\cdot \frac{(Y^{-2};q)_{\ell-j}}{(t^{-1}Y^{-2};q)_{\ell-j}} \cdot
t \frac{(1-t^{-1}Y^{-2})}{(1-Y^{-2})} 
\\
&= t^{-\frac12} K_j^{(\ell)}(Y)\cdot t^{-\frac12(\ell-2j)}\cdot \frac{(1-q^{\ell-j})}{(1-q^\ell)}\cdot \frac{(1-Y^{-2}q^{\ell-j})}{(1-Y^{-2}q^{\ell-2j})}\cdot
\frac{\cancel{(1-Y^{-2})} }{\cancel{(1-t^{-1}Y^{-2})} }
\\
&\qquad\cdot
\frac{(t^{-1}Y^{-2}q^{\ell-2j};q)_j}{(Y^{-2}q^{\ell-2j};q)_j}
\cdot \frac{(Y^{-2};q)_{\ell-j}}{(t^{-1}Y^{-2};q)_{\ell-j}} \cdot
t \cdot \frac{\cancel{(1-t^{-1}Y^{-2})} }{\cancel{(1-Y^{-2})} } 
\\
&= t^{\frac12}\cdot t^{-\frac12} C_j^{(\ell)}(Y)\cdot \frac{(1-q^{\ell-j})}{(1-q^\ell)}\cdot \frac{(1-Y^{-2}q^{\ell-j})}{(1-Y^{-2}q^{\ell-2j})}
=A_j^{(\ell)}(Y).
\end{align*}
(c) Using \eqref{DinvtoK} and \eqref{KtoC} and \eqref{CtoB} gives
\begin{align*}
&D_{\ell-j}^{(\ell-1)}(Y^{-1})\cdot 
t^{-\frac12(\ell-2j)}\cdot
\frac{(t^{-1}Y^{-2}q^{(\ell-2j)+1};q)_{j} }{(Y^{-2}q^{(\ell-2j)+1};q)_{j} }\cdot
\frac{(Y^{-2}q;q)_{\ell-j} }{(t^{-1}Y^{-2}q;q)_{\ell-j}}
\\
&=K_j^{(\ell)}(Y)\cdot t^{-\frac12(\ell-2j)}\cdot
\frac{(t^{-1}Y^{-2}q^{(\ell-2j)+1};q)_{j} }{(Y^{-2}q^{(\ell-2j)+1};q)_{j} }\cdot
\frac{(Y^{-2}q;q)_{\ell-j} }{(t^{-1}Y^{-2}q;q)_{\ell-j}}
\cdot \frac{(1-t^{-1}Y^{-2}q^{\ell-2j})}{(1-t^{-1}Y^{-2})}
\cdot \frac{(1-Y^{-2})}{(1-Y^{-2}q^{\ell-2j})}
\\
&\qquad
\cdot  t^{\frac12} q^{\ell-j}\cdot
\frac{(1-q^{j})}{(1-q^{\ell})}\cdot 
\frac{(1-Y^{-2}q^{-j})}{(1-tY^{-2}q^{\ell-2j})}
\\
&=K_j^{(\ell)}(Y)\cdot t^{-\frac12(\ell-2j)}\cdot
\frac{(t^{-1}Y^{-2}q^{(\ell-2j)};q)_{j} }{(Y^{-2}q^{(\ell-2j)};q)_{j} }\cdot
\frac{(Y^{-2};q)_{\ell-j} }{(t^{-1}Y^{-2};q)_{\ell-j}}
\cdot \frac{(1-t^{-1}Y^{-2}q^{\ell-j})}{(1-t^{-1}Y^{-2}q^{\ell-j})}
\cdot \frac{(1-Y^{-2}q^{\ell-j})}{(1-Y^{-2}q^{\ell-j})}
\\
&\qquad
\cdot t^{\frac12} q^{\ell-j}\cdot
\frac{(1-q^{j})}{(1-q^{\ell})}\cdot 
\frac{(1-Y^{-2}q^{-j})}{(1-tY^{-2}q^{\ell-2j})}
\\
&=t^{-\frac12} C_j^{(\ell)}(Y)\cdot  t^{\frac12} q^{\ell-j}\cdot \frac{(1-q^{j})}{(1-q^{\ell})}\cdot 
\frac{(1-Y^{-2}q^{-j})}{(1-tY^{-2}q^{\ell-2j})}
= B_{\ell-j}^{(\ell)}(Y).
\end{align*}
Then, replacing $j$ with $\ell-j$ gives
$$B_j^{(\ell)}(Y) = 
D_j^{(\ell-1)}(Y^{-1})\cdot 
t^{\frac12(\ell-2j)}\cdot
\frac{(t^{-1}Y^{-2}q^{-(\ell-2j)+1};q)_{\ell-j} }{(Y^{-2}q^{-(\ell-2j)+1};q)_{\ell-j} }\cdot
\frac{(Y^{-2}q;q)_j }{(t^{-1}Y^{-2}q;q)_j }.
$$
\end{proof}

\subsection{Product formulas for type $SL_2$ Macdonald polynomials}

The following theorem provides formulas for the products $E_\ell(x) E_m(x)$, $E_{-\ell}(x) P_m(x)$
and $P_\ell(x)P_m(x)$ expanded in terms of Macdonald polynomials.  It is useful to note that, in 
Theorem \ref{finalthm},
\begin{align*}
\ev_m(A_j^{(\ell)}(Y)) &= 0 \ \ \hbox{if $m+\ell-2j<0$,} \quad
&&
\ev_m(t B_j^{(\ell+1)}(Y)) = 0 \ \ \hbox{if $m-(\ell-2j)<0$,} 
\\
\ev_m(B_j^{(\ell)}(Y)) &= 0 \ \ \hbox{if $-m+\ell-2j>0$,} \quad
&&
\ev_m(A_j^{(\ell+1)}(Y)) = 0 \ \ \hbox{if $-m-(\ell-2j)>0$,} 
\end{align*}
and
$$
\ev_m(C_j^{(\ell)}(Y)) = 0 \ \ \hbox{if $m+\ell-2j<0$,} 
$$
since a factor in the numerator of each of these expressions evaluates to $(1-1)=0$.

\begin{thm} \label{finalthm}
Let $\ell, m\in \ZZ_{>0}$.
Let $\ev_m\colon \CC[Y,Y^{-1}]\to \CC$ be the homomorphism given by $\ev_m(Y) = t^{-\frac12} q^{-\frac12 m}$
and extend $\ev_m$ to elements of $\CC(Y)$ such that the denominator does not evaluate to $0$.  Then
$$
P_\ell(x) P_m(x) = \sum_{j=0}^{\ell} \ev_m(C_j^{(\ell)}(Y)) P_{m+\ell-2j}(x),$$
$$
E_\ell(x) P_m(x) 
= \sum_{j=0}^{\ell-1} \ev_m(A_j^{(\ell)}(Y)) E_{m+\ell-2j}(x)
+ \sum_{j=0}^{\ell-1} \ev_m(B_j^{(\ell)}(Y)) E_{-m+\ell-2j}(x)
\qquad\hbox{and}
$$
$$
E_{-\ell}(x) P_m(x) 
= \sum_{j=0}^{\ell} \ev_m(t B_j^{(\ell+1)}(Y)) E_{m-(\ell-2j)}(x)
+ \sum_{j=0}^{\ell} \ev_m(A_j^{(\ell+1)}(Y)) E_{-m-(\ell-2j)}(x).
$$
\end{thm}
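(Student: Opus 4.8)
The plan is to extract each of the three products by letting the universal operators of Theorem \ref{DandKresult} act on the cyclic vector $E_m(X)\mathbf 1_Y$ and translating back to polynomials through the dictionary $P_m(X)\mathbf 1_Y=t^{\frac12}\mathbf 1_0 E_m(X)\mathbf 1_Y$ of \eqref{creationP} together with \eqref{Pdefn}. The one structural fact used throughout is that a rational function $f(Y)$ is a scalar on a $Y$-eigenvector: by \eqref{Eeig}, $f(Y)E_m(X)\mathbf 1_Y=\ev_m(f(Y))E_m(X)\mathbf 1_Y$ and $f(Y)E_{-m}(X)\mathbf 1_Y=\ev_m(f(Y^{-1}))E_{-m}(X)\mathbf 1_Y$ for $m>0$.

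First I would compute $E_\ell P_m$. Writing $E_\ell(X)P_m(X)\mathbf 1_Y=t^{\frac12}E_\ell(X)\mathbf 1_0 E_m(X)\mathbf 1_Y$ and inserting the expansion $E_\ell(X)\mathbf 1_0=\sum_{j}\eta^{\ell-2j}D_j^{(\ell-1)}(Y)\mathbf 1_0$ of \eqref{Ddefn}, I would replace $\mathbf 1_0 E_m(X)\mathbf 1_Y=t^{-\frac12}\big(E_{-m}+\tfrac{t(1-q^m)}{1-tq^m}E_m\big)\mathbf 1_Y$. In each summand $D_j^{(\ell-1)}(Y)$ becomes a scalar under $\ev_m$, and the leftmost operator $\eta^{\ell-2j}$ is then applied to the eigenvectors $E_{\pm m}$ by \eqref{normp} and \eqref{normn}. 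The $E_m$-contribution is $E_{m+\ell-2j}$ with coefficient $\ev_m(A_j^{(\ell)})$ by \eqref{DtoA} (the surplus factor $t\tfrac{1-t^{-1}Y^{-2}}{1-Y^{-2}}$ there evaluating under $\ev_m$ to exactly the constant $\tfrac{t(1-q^m)}{1-tq^m}$ coming from \eqref{Pdefn}), and the $E_{-m}$-contribution is $E_{-m+\ell-2j}$ with coefficient $\ev_m(B_j^{(\ell)})$ by \eqref{DinvtoB}. The product $E_{-\ell}P_m$ is handled identically from $E_{-\ell}(X)\mathbf 1_0=\sum_j\eta^{-\ell+2j}D_j^{(-\ell)}(Y)\mathbf 1_0$, now invoking the reflected action formulas \eqref{normpB}, \eqref{normnB}, the symmetry $D_j^{(-\ell)}(Y)=t^{\frac12}D_j^{(\ell)}(Y^{-1})$ of Proposition \ref{tildeDrecursion}(a), and \eqref{DtoA}, \eqref{DinvtoB} read at level $\ell+1$; this is precisely what recasts the relevant $D^{(\ell)}$'s as $tB_j^{(\ell+1)}$ (from the $E_m$-part) and $A_j^{(\ell+1)}$ (from the $E_{-m}$-part).

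For the symmetric product I would deliberately avoid expanding $\mathbf 1_0E_\ell(X)\mathbf 1_0$ directly against $E_m\mathbf 1_Y$ through its $K_j^{(\ell)}$-expansion \eqref{Kdefn}: since $\eta$ and $\eta^{-1}$ are not invertible as operators, the sandwiched factor $\mathbf 1_0\eta^{\ell-2j}$ does not act term by term as an honest composition (for instance $\eta^{-1}E_1\mathbf 1_Y=0$ would spuriously annihilate the constant $P_0$), so that route loses boundary terms. Instead I would use $P_\ell(X)P_m(X)\mathbf 1_Y=t^{\frac12}\mathbf 1_0\big(E_\ell(X)P_m(X)\mathbf 1_Y\big)$, substitute the formula for $E_\ell P_m$ just obtained, and push $\mathbf 1_0$ through via $\mathbf 1_0E_r\mathbf 1_Y=t^{-\frac12}P_r\mathbf 1_Y$ for $r>0$, $\mathbf 1_0E_{-r}\mathbf 1_Y=t^{-\frac12}\tfrac{1-t^2q^r}{1-tq^r}P_r\mathbf 1_Y$ for $r>0$, and $\mathbf 1_0E_0\mathbf 1_Y=(t^{\frac12}+t^{-\frac12})P_0\mathbf 1_Y$. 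For fixed $n=m+\ell-2j>0$ the vector $P_n$ is fed only by the $A_j$-term $E_n$ and the $B_{\ell-j}$-term $E_{-n}$: the two ``wrong side'' contributions $A_{m+\ell-j}E_{-n}$ and $B_{j-m}E_{n}$ vanish since $\ev_m(A_{j'})=0$ for $m+\ell-2j'<0$ and $\ev_m(B_{j'})=0$ for $-m+\ell-2j'>0$. Their coefficients then combine through the rational identity $A_j^{(\ell)}(Y)+B_{\ell-j}^{(\ell)}(Y)\tfrac{1-tY^{-2}q^{\ell-2j}}{1-Y^{-2}q^{\ell-2j}}=C_j^{(\ell)}(Y)$, a one-line consequence of \eqref{CtoA} and \eqref{CtoB}, producing $\ev_m(C_j^{(\ell)})$; the degenerate value $n=0$ (present only when $\ell=m$) is correct because $A_\ell^{(\ell)}=B_\ell^{(\ell)}=0$.

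The main obstacle is exactly the non-invertibility of $\eta$ on the polynomial representation. One must verify that in $E_{\pm\ell}P_m$ the outermost powers of $\eta$ genuinely agree with the split compositions recorded in \eqref{normp}--\eqref{normnB} — they coincide whenever nonzero and vanish together, the vanishing reproducing the stated conditions on $\ev_m(A_j)$, $\ev_m(B_j)$ and $\ev_m(C_j)$ — while recognising that these same formulas must not be used inside the sandwiched $K_j$-expansion of $\mathbf 1_0E_\ell(X)\mathbf 1_0$, which is why the symmetric product is routed through $\mathbf 1_0(E_\ell P_m)$. Once this point is respected, all that remains is index bookkeeping and the purely rational cancellations among $A_j,B_j,C_j$ furnished by Proposition \ref{ABCtoDK}.
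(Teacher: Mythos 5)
Your derivations of $E_\ell P_m$ and $E_{-\ell}P_m$ are essentially identical to the paper's proof of those two formulas: expand $E_{\pm\ell}(X)\mathbf{1}_0$ by \eqref{Ddefn}, use $\mathbf{1}_0E_m(X)\mathbf{1}_Y=t^{-\frac12}P_m(X)\mathbf{1}_Y$ together with \eqref{Pdefn}, and evaluate with \eqref{normp}, \eqref{normn} (resp.\ \eqref{normpB}, \eqref{normnB} and Proposition \ref{Drecursion}(a)) via \eqref{DtoA} and \eqref{DinvtoB}. Nothing to add there.

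For $P_\ell P_m$ you take a genuinely different route, and it has real merit. The paper expands $P_\ell(X)P_m(X)\mathbf{1}_Y=t\sum_j\mathbf{1}_0\eta^{\ell-2j}K_j^{(\ell)}(Y)E_m(X)\mathbf{1}_Y$ via \eqref{Kdefn} and evaluates each term through \eqref{normp} and \eqref{KtoC}; you instead apply $t^{\frac12}\mathbf{1}_0$ to the already-proved expansion of $E_\ell P_m$ and recombine, using $\mathbf{1}_0E_r\mathbf{1}_Y=t^{-\frac12}P_r\mathbf{1}_Y$, $\mathbf{1}_0E_{-r}\mathbf{1}_Y=t^{-\frac12}\tfrac{(1-t^2q^r)}{(1-tq^r)}P_r\mathbf{1}_Y$ and the identity $A_j^{(\ell)}(Y)+B_{\ell-j}^{(\ell)}(Y)\tfrac{(1-tY^{-2}q^{\ell-2j})}{(1-Y^{-2}q^{\ell-2j})}=C_j^{(\ell)}(Y)$, which does follow from \eqref{CtoA} and \eqref{CtoB} because $(1-q^{\ell-j})(1-Y^{-2}q^{\ell-j})+q^{\ell-j}(1-q^j)(1-Y^{-2}q^{-j})=(1-q^\ell)(1-Y^{-2}q^{\ell-2j})$. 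Your criticism of the $K_j$-route is also on target: in the paper's own computation, the term $j=\ell$ of $P_\ell P_\ell$ has $\eta^{-\ell}E_\ell(X)\mathbf{1}_Y=0$ while $\ev_\ell(K_\ell^{(\ell)}(Y))$ is undefined (the denominator factor $(1-t^{-1}Y^{-2}q^{-\ell})$ vanishes at $\ev_\ell$), so the intermediate line $t\,\ev_m(K_j^{(\ell)}(Y))\,\mathbf{1}_0\eta^{-j}\eta^{\ell-j}E_m(X)\mathbf{1}_Y$ is a $0\cdot\infty$ that is resolved only by multiplying $K_j^{(\ell)}(Y)$ with the coefficient of \eqref{normp} as rational functions \emph{before} evaluating, i.e.\ by \eqref{KtoC}. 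Your route confines every evaluation to the functions $A_j$, $B_j$, $C_j$ and the conversion factors, all of which are regular at $\ev_m$, so no such indeterminacy arises.

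There is, however, a genuine gap in your handling of the one degenerate case of your own route, the terms landing on $E_0$. Such terms occur whenever $m\le\ell$ and $m\equiv\ell\pmod 2$, not only when $\ell=m$: the $B$-sum hits $E_0$ at $j'=\tfrac{\ell-m}{2}$, and when $m\le\ell-2$ the $A$-sum also hits $E_0$ at $j_0=\tfrac{m+\ell}{2}\le\ell-1$. Since $\mathbf{1}_0E_0(X)\mathbf{1}_Y=(t^{\frac12}+t^{-\frac12})\mathbf{1}_Y$ rather than $t^{-\frac12}\mathbf{1}_Y$, your coefficient of $P_0$ is $(1+t)\bigl(\ev_m(A_{j_0}^{(\ell)})+\ev_m(B_{\ell-j_0}^{(\ell)})\bigr)$, whereas the target is $\ev_m(C_{j_0}^{(\ell)})=\ev_m(A_{j_0}^{(\ell)})+(1+t)\ev_m(B_{\ell-j_0}^{(\ell)})$; these agree if and only if $\ev_m(A_{j_0}^{(\ell)})=0$. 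That vanishing is true, but not for the reasons you cite: the condition $\ev_m(A_j^{(\ell)})=0$ for $m+\ell-2j<0$ is strict and does not apply at $m+\ell-2j_0=0$, and $A_\ell^{(\ell)}=B_\ell^{(\ell)}=0$ disposes only of the case $\ell=m$ (where $j_0=\ell$ lies outside the $A$-sum). What you need is the further boundary vanishing $\ev_m(A_j^{(\ell)})=0$ whenever $j\ge m+1$, which holds because the numerator factor $(t^{-1}Y^{-2}q^{-(j-1)};q)_j$ of $A_j^{(\ell)}$ evaluates to $(q^{m-j+1};q)_j$, which contains $(1-q^0)=0$; and $j_0=\tfrac{m+\ell}{2}\ge m+1$ exactly when $m\le\ell-2$. (The same argument shows $\ev_m(C_{j_0}^{(\ell)})=0$ there, so in fact $P_0$ genuinely appears only when $\ell=m$ — but that is a statement requiring this lemma, not a substitute for it.) The smallest instance your argument as written misses is $\ell=3$, $m=1$. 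With this one vanishing lemma inserted, your proof is complete and is a legitimate alternative to the paper's derivation of the $P_\ell P_m$ formula.
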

\begin{proof} 
By \eqref{Eeig},
if $f\in \CC(Y)$ such that $\ev_m(f(Y))$ is defined then $f(Y)E_m(X)\mathbf{1}_Y = \ev_m(f(Y)E_m(X)\mathbf{1}_Y$.

\smallskip\noindent
(a) By \eqref{creationP}, \eqref{smashE}, \eqref{Kdefn} and \eqref{Eeig},
\begin{align*}
P_\ell(X)P_m(X)\mathbf{1}_Y 
&= P_\ell(X) t^{\frac12} \mathbf{1}_0 E_m(X)\mathbf{1}_Y
= t \mathbf{1}_0 E_\ell(X) \mathbf{1}_0 E_m(X) \mathbf{1}_Y
\\
&
=t \Big(\sum_{j=0}^\ell \mathbf{1}_0\eta^{\ell-2j} K_j^{(\ell)}(Y)  \Big) E_m(X) \mathbf{1}_Y
\end{align*}
and, using \eqref{normp},
\begin{align*}
\mathbf{1}_0 &\eta^{\ell-2j} t K_j^{(\ell)}(Y)E_m(X)\mathbf{1}_Y
= t\cdot \ev_m(K_j^{(\ell)}(Y)) \mathbf{1}_0 \eta^{-j}\eta^{\ell-j} E_m(X)\mathbf{1}_Y
\\
&= t\cdot \ev_m\Big( K_j^{(\ell)}(Y) t^{-\frac12(\ell-2j)} 
\frac{(t^{-1}Y^{-2}q^{\ell-2j};q)_j}{(Y^{-2}q^{\ell-2j};q)_j}\cdot
\frac{(Y^{-2};q)_{\ell-j} }{(t^{-1}Y^{-2};q)_{\ell-j} }\Big) 
\mathbf{1}_0 E_{m+\ell-2j}(X)\mathbf{1}_Y
\\
&= t\cdot \ev_m\Big( K_j^{(\ell)}(Y) t^{-\frac12(\ell-2j)} 
\frac{(t^{-1}Y^{-2}q^{\ell-2j};q)_j}{(Y^{-2}q^{\ell-2j};q)_j}\cdot
\frac{(Y^{-2};q)_{\ell-j} }{(t^{-1}Y^{-2};q)_{\ell-j} }\Big) 
t^{-\frac12} P_{m+\ell-2j}(X)\mathbf{1}_Y
\\
&= t^{\frac12}\ev_m(t^{-\frac12}C_j^{(\ell)}(Y)) P_{m+\ell-2j}(X)\mathbf{1}_Y,
\end{align*}
where the last equality is \eqref{KtoC}.

\smallskip\noindent
(b)
By \eqref{creationP}, \eqref{Ddefn} and \eqref{Pdefn},
\begin{align*}
E_\ell(X)P_m(X)\mathbf{1}_Y 
&= E_\ell(X)t^{\frac12} \mathbf{1}_0 E_m(X) \mathbf{1}_Y 
=t^{\frac12}\Big(\sum_{j=0}^{\ell-1} \eta^{\ell-2j} D_j^{(\ell-1)}(Y) \Big) \mathbf{1}_0E_m(X) \mathbf{1}_Y
\\
&=t^{\frac12}\Big(\sum_{j=0}^{\ell-1} \eta^{\ell-2j} D_j^{(\ell-1)}(Y) \Big) t^{-\frac12} P_m(X) \mathbf{1}_Y
\\
&=\Big(\sum_{j=0}^{\ell-1} \eta^{\ell-2j} D_j^{(\ell-1)}(Y) \Big)
\Big( \frac{t(1-q^m)}{(1-tq^m)} E_m(X) + E_{-m}(X)\Big)\mathbf{1}_Y.
\end{align*}
Using \eqref{normp},
\begin{align*}
\eta^{\ell-2j}&D_j^{(\ell-1)}(Y) t\frac{(1-q^m)}{(1-tq^m)}E_m(X)\mathbf{1}_Y
=\ev_m\big(D_j^{(\ell-1)}(Y)\big) t\frac{(1-q^m)}{(1-tq^m)} \eta^{-j}\eta^{\ell-j} E_m(X)\mathbf{1}_Y
\\
&=\ev_m\Big(
D_j^{(\ell-1)}(Y) t \frac{(1-t^{-1}Y^{-2})}{(1-Y^{-2})} 
t^{-\frac12(\ell-2j)}
\frac{(t^{-1}Y^{-2}q^{\ell-2j};q)_j}{(Y^{-2}q^{\ell-2j};q)_j}\cdot
\frac{(Y^{-2};q)_{\ell-j} }{(t^{-1}Y^{-2};q)_{\ell-j} }
\Big)
E_{m+\ell-2j}(X)\mathbf{1}_Y
\\
&= \ev_m( A_j^{(\ell)}(Y)) E_{m+\ell-2j}(X)\mathbf{1}_Y,
\end{align*}
where the last equality follows from \eqref{DtoA}.
Using \eqref{Eeig} and \eqref{normn},
\begin{align*}
\eta^{\ell-2j}&D_j^{(\ell-1)}(Y) E_{-m}(X)\mathbf{1}_Y
= \ev_m(D_j^{(\ell-1)}(Y^{-1})) \eta^{\ell-j}\eta^{-j}E_{-m}(X)\mathbf{1}_Y
\\
&= \ev_m\Big( D_j^{(\ell-1)}(Y^{-1}) t^{\frac12(\ell-2j)} 
\frac{(t^{-1}Y^{-2}q^{-(\ell-2j)+1};q)_{\ell-j} }{(Y^{-2}q^{-(\ell-2j)+1};q)_{\ell-j} }
\frac{(Y^{-2}q;q)_j }{(t^{-1}Y^{-2}q;q)_j} \Big) E_{-m+\ell-2j}(X)\mathbf{1}_Y
\\
&= \ev_m(B_j^{(\ell)}(Y)) E_{-m+\ell-2j}(X)\mathbf{1}_Y,
\end{align*}
where the last equality is \eqref{DinvtoB}.

\smallskip\noindent
(c)  By \eqref{creationP}, \eqref{Ddefn}, \eqref{Pdefn} and Proposition \ref{Drecursion}(a),
\begin{align*}
E_{-\ell}(X)P_m(X)\mathbf{1}_Y 
&= E_{-\ell}(X)t^{\frac12} \mathbf{1}_0 E_m(X) \mathbf{1}_Y 
=t^{\frac12}\Big(\sum_{j=0}^{\ell} \eta^{-(\ell-2j)} D_j^{(-\ell)}(Y) \Big) \mathbf{1}_0E_m(X) \mathbf{1}_Y
\\
&=t^{\frac12}\Big(\sum_{j=0}^{\ell} \eta^{-(\ell-2j)} D_j^{(-\ell)}(Y) \Big) t^{-\frac12} P_m(X) \mathbf{1}_Y
\\
&=\Big(\sum_{j=0}^{\ell} \eta^{-(\ell-2j)} D_j^{(-\ell)}(Y) \Big)
\Big( \frac{t(1-q^m)}{(1-tq^m)} E_m(X) + E_{-m}(X)\Big)\mathbf{1}_Y.
\\
&=\Big(\sum_{j=0}^{\ell} \eta^{-(\ell-2j)} t^{\frac12}D_j^{(\ell)}(Y^{-1}) \Big)
\Big( \frac{t(1-q^m)}{(1-tq^m)} E_m(X) + E_{-m}(X)\Big)\mathbf{1}_Y.
\end{align*}
Using 
\begin{align*}
\frac{(1-t^{-1}Y^{-2})}{(1-Y^{-2})} 
&\frac{(t^{-1}Y^{-2}q^{-(\ell-2j)};q)_{\ell-j}}{(Y^{-2}q^{-(\ell-2j)};q)_{\ell-j} }\cdot
\frac{(Y^{-2};q)_j }{(t^{-1}Y^{-2};q)_j }
\\
&=\frac{(t^{-1}Y^{-2}q^{-(\ell-2j)};q)_{\ell+1-j}}{(Y^{-2}q^{-(\ell-2j)};q)_{\ell+1-j} }\cdot
\frac{(1-Y^{-2}q^{j})}{(1-t^{-1}Y^{-2}q^{j})}\cdot
\frac{(Y^{-2}q;q)_{j-1} }{(t^{-1}Y^{-2}q;q)_{j-1} }
\\
&= \frac{(t^{-1}Y^{-2}q^{-(\ell-2j)};q)_{\ell+1-j}}{(Y^{-2}q^{-(\ell-2j)};q)_{\ell+1-j} }\cdot
\frac{(Y^{-2}q;q)_j }{(t^{-1}Y^{-2}q;q)_j }
\end{align*}
and \eqref{normpB}gives
\begin{align*}
&\eta^{-(\ell-2j)} t^{\frac12} D_j^{(\ell)}(Y^{-1}) t\frac{(1-q^m)}{(1-tq^m)}E_m(X)\mathbf{1}_Y
= \ev_m\Big(t^{\frac12}D_j^{(\ell)}(Y^{-1}) t\frac{(1-t^{-1}Y^{-2})}{(1-Y^{-2})} \Big) 
\eta^{-(\ell-j)}\eta^jE_m(X)\mathbf{1}_Y
\\
&=\ev_m\Big(
t^{\frac12} D_j^{(\ell)}(Y^{-1}) t \frac{(1-t^{-1}Y^{-2})}{(1-Y^{-2})} 
t^{\frac12(\ell-2j)}
\frac{(t^{-1}Y^{-2}q^{-(\ell-2j)};q)_{\ell-j}}{(Y^{-2}q^{-(\ell-2j)};q)_{\ell-j} }\cdot
\frac{(Y^{-2};q)_j }{(t^{-1}Y^{-2};q)_j }
\Big)
E_{m-(\ell-2j)}(X)\mathbf{1}_Y
\\
&=\ev_m\Big(
t D_j^{(\ell)}(Y^{-1}) \cdot t^{\frac12(\ell+1-2j)}\cdot
\frac{(t^{-1}Y^{-2}q^{-(\ell-2j)};q)_{\ell+1-j}}{(Y^{-2}q^{-(\ell-2j)};q)_{\ell+1-j} }\cdot
\frac{(Y^{-2}q;q)_j }{(t^{-1}Y^{-2}q;q)_j }
\Big)
E_{m-(\ell-2j)}(X)\mathbf{1}_Y
\\
&= \ev_m( t B^{(\ell+1)}_j(Y)) E_{m-(\ell-2j)}(X)\mathbf{1}_Y,
\end{align*}
where the last equality is \eqref{DinvtoB}.
Using \eqref{Eeig} and \eqref{normnB} gives
\begin{align*}
&\eta^{-(\ell-2j)}t^{\frac12} D_j^{(\ell)}(Y^{-1}) E_{-m}(X)\mathbf{1}_Y
= \ev_m(t^{\frac12} D_j^{(\ell)}(Y)) \eta^{j}\eta^{-(\ell-j)}E_{-m}(X)\mathbf{1}_Y
\\
&= \ev_m\Big( t^{\frac12} D_j^{(\ell)}(Y) t^{-\frac12(\ell-2j)} 
\frac{(t^{-1}Y^{-2}q^{\ell-2j+1};q)_j }{(Y^{-2}q^{\ell-2j+1};q)_j }
\frac{(Y^{-2}q;q)_{\ell-j} }{(t^{-1}Y^{-2}q;q)_{\ell-j} } \Big) E_{-m-(\ell-2j)}(X)\mathbf{1}_Y
\\
&= \ev_m\Big( D_j^{(\ell)}(Y) t^{-\frac12(\ell+1-2j)} 
\frac{(t^{-1}Y^{-2}q^{\ell-2j+1};q)_j }{(Y^{-2}q^{\ell-2j+1};q)_j }
\frac{(Y^{-2};q)_{\ell+1-j} }{(t^{-1}Y^{-2};q)_{\ell+1-j} } \cdot
t\frac{(1-t^{-1}Y^{-2})}{(1-Y^{-2})}
 \Big) E_{-m-(\ell-2j)}(X)\mathbf{1}_Y
\\
&= \ev(A_j^{(\ell+1)}(Y)) E_{-m-(\ell-2j)}(X)\mathbf{1}_Y,
\end{align*}
where the last equality is \eqref{DtoA}.
\end{proof}

\begin{remark}
After replacing $Y^{-2}$ by $X_1X_2^{-1}$ the expression for $C_j^{(\ell)}(Y)$
coincides with
the expression for the Macdonald Littlewood-Richardson coefficient given in \cite[Theorem 1.4]{MW23}.
\end{remark}

\newpage

\section{Examples}

For $j\in \ZZ_{>0}$ and $a,b\in \ZZ$ with $a\le b$ define
\begin{align*}
(z;q)_j &= (1-z)(1-qz)(1-q^2z)\cdots (1-q^{j-1}z)
\qquad\hbox{and} \\
(1-zq^{a..b}) &= (1-zq^a)(1-zq^{a+1})\cdots (1-zq^{b-1})(1-zq^b)
\quad\hbox{so that}\quad
(1-zq^{a..b}) = (1-zq^a)_{b-a+1}.
\end{align*}

\subsection{Examples of the $q$-$t$-binomial coefficients}

Let
$$\genfrac[]{0pt}{0}{k}{j}_{q,t}  
= \frac{ \frac{(q;q)_k}{(t;q)_k} } { \frac{(q;q)_j}{(t;q)_j} \frac{(q;q)_{k-j}}{(t;q)_{k-j}} }
= \frac{(1-q^{j+1..k})}{(1-q^{1..k-j})}\frac{(1-tq^{0..k-j-1})}{(1-tq^{j..k-1})}.$$
Then
$$
\begin{array}{c}
\genfrac[]{0pt}{0}{0}{0}_{q,t} =  1, 
\\
\\
\genfrac[]{0pt}{0}{1}{0}_{q,t} = 1,
\qquad
\genfrac[]{0pt}{0}{1}{1}_{q,t} =1,
\\
\\
\genfrac[]{0pt}{0}{2}{0}_{q,t} =1
\qquad
\displaystyle{\genfrac[]{0pt}{0}{2}{1}_{q,t} = \frac{(1-q^2)(1-t)}{(1-q)(1-tq)},}
\qquad
\genfrac[]{0pt}{0}{2}{2}_{q,t} = 1,
\\
\\
\genfrac[]{0pt}{0}{3}{0}_{q,t} =1,
\qquad 
\displaystyle{\genfrac[]{0pt}{0}{3}{1}_{q,t} =
\frac{(1-t)(1-q^3)}{(1-q)(1-tq^2)},}
\qquad 
\displaystyle{\genfrac[]{0pt}{0}{3}{2}_{q,t} =
\frac{(1-t)(1-q^3)}{(1-q)(1-tq^2)},}
\qquad \genfrac[]{0pt}{0}{3}{3}_{q,t} =1
\end{array}
$$

\subsection{Examples of the shifted $q$-$t$-binomial coefficients}

Let
$$
\genfrac\{\}{0pt}{0}{k}{j}_{q,t}  
= \genfrac[]{0pt}{0}{k}{j}_{q,t}  \frac{(1-tq^{k-j})}{(1-tq^k)}
= \frac{(1-q^{j+1..k})}{(1-q^{1..k-j})}\frac{(1-tq^{1..k-j-1})}{(1-tq^{j..k})}.$$
Then
$$
\begin{array}{c}
\genfrac\{\}{0pt}{0}{0}{0}_{q,t} =  1, 
\\
\\
\genfrac\{\}{0pt}{0}{1}{0}_{q,t} = 1,
\qquad
\displaystyle{
\genfrac\{\}{0pt}{0}{1}{1}_{q,t} =\frac{(1-t)}{(1-tq)},
}
\\
\\
\genfrac\{\}{0pt}{0}{2}{0}_{q,t} =1
\qquad
\displaystyle{\genfrac\{\}{0pt}{0}{2}{1}_{q,t} = \frac{(1-q^2)(1-t)}{(1-q)(1-tq^2)},}
\qquad
\genfrac\{\}{0pt}{0}{2}{2}_{q,t} = \frac{(1-t)}{(1-tq^2)},
\\
\\
\genfrac\{\}{0pt}{0}{3}{0}_{q,t} =1,
\qquad 
\displaystyle{\genfrac\{\}{0pt}{0}{3}{1}_{q,t} =
\frac{(1-t)(1-q^3)}{(1-q)(1-tq^3)},}
\qquad 
\displaystyle{\genfrac\{\}{0pt}{0}{3}{2}_{q,t} =
\frac{(1-t)(1-tq)}{(1-q)(1-tq^2)},}
\qquad \genfrac\{\}{0pt}{0}{3}{3}_{q,t} = \frac{(1-t)}{(1-tq^3)}.
\end{array}
$$

\newpage

\subsection{Examples of the $Y$-binomial coefficients}\label{Ybinexamples}

\begin{align*}
\genfrac(){0pt}{0}{\ell}{j}_{Y} 
&= 
\frac{(1-t^{-1}Y^{-2} q^{-(j-1)..\ell-2j}) (1-tY^{-2} q^{\ell-2j..\ell-j-1})}
{(1-Y^{-2}q^{1..\ell-j}) (1-Y^{-2}q^{-j..-1})}
=
\frac{(t^{-1}Y^{-2} q^{-(j-1)};q)_{\ell-j} (tY^{-2}q^{\ell-2j};q)_j}{  (Y^{-2} q;q)_{\ell-j} (Y^{-2}q^{-j};q)_j }.
\end{align*}

Then
\begin{align*}
\genfrac(){0pt}{0}{0}{0}_{Y} &=  1, 
\\
\\
\genfrac(){0pt}{0}{1}{0}_{Y} &= \frac{(1-t^{-1}Y^{-2}q)}{(1-Y^{-2}q)} 
= t^{-1}\cdot \frac{(1-tY^2q^{-1})}{(1-Y^2q^{-1})}
\\
\genfrac(){0pt}{0}{1}{1}_{Y} &=\frac{(1-tY^{-2}q^{-1})}{(1-Y^{-2}q^{-1})} 
= t\cdot \frac{(1-t^{-1}Y^2q)}{(1-Y^2q)}
\\
\\
\genfrac(){0pt}{0}{2}{0}_{Y} 
&=\frac{(1-t^{-1}Y^{-2}q)(1-t^{-1}Y^{-2}q^2)}{(1-Y^{-2}q)(1-Y^{-2}q^2)}
=t^{-2}\cdot \frac{(1-tY^{2}q^{-1})(1-tY^{2}q^{-2})}{(1-Y^{2}q^{-1})(1-Y^{2}q^{-2})}
\\
\genfrac(){0pt}{0}{2}{1}_{Y} 
&= \frac{(1-t^{-1}Y^{-2})}{(1-Y^{-2}q)}\cdot \frac{(1-tY^{-2})}{(1-Y^{-2}q^{-1})} 
= \frac{(1-tY^{2})}{(1-Y^{2}q^{-1})}\cdot \frac{(1-t^{-1}Y^{2})}{(1-Y^{2}q)} 
\\
\genfrac(){0pt}{0}{2}{2}_{Y} 
&= \frac{(1-tY^{-2}q^{-2})(1-tY^{-2}q^{-1})}{(1-Y^{-2}q^{-2})(1-Y^{-2}q^{-1})}
= t^2\cdot \frac{(1-t^{-1}Y^{2}q^{2})(1-t^{-1}Y^{2}q)}{(1-Y^{2}q^{2})(1-Y^{2}q)}
\\
\\
\genfrac(){0pt}{0}{3}{0}_{Y} 
&=\frac{(1-t^{-1}Y^{-2}q)(1-t^{-1}Y^{-2}q^2)(1-t^{-1}Y^{-2}q^3)}{(1-Y^{-2}q)(1-Y^{-2}q^2)(1-Y^{-2}q^3)}
\\
\genfrac(){0pt}{0}{3}{1}_{Y} 
&=
\frac{(1-t^{-1}Y^{-2})(1-t^{-1}Y^{-2}q)}{(1-Y^{-2}q)(1-Y^{-2}q^2)}
\cdot \frac{(1-tY^{-2}q)}{(1-Y^{-2}q^{-1})} 
\\
\genfrac(){0pt}{0}{3}{2}_{Y} 
&=
\frac{(1-t^{-1}Y^{-2}q^{-1})}{(1-Y^{-2}q)}
\cdot \frac{(1-tY^{-2}q^{-1})(1-tY^{-2})}{(1-Y^{-2}q^{-2})(1-Y^{-2}q^{-1})} 
\\
\genfrac(){0pt}{0}{3}{3}_{Y} 
&=\frac{(1-tY^{-2}q^{-3})(1-tY^{-2}q^{-2})(1-tY^{-2}q^{-1})}
{(1-Y^{-2}q^{-3})(1-Y^{-2}q^{-2})(1-Y^{-2}q^{-1})}
\end{align*}

\newpage

\subsection{Examples of the $D_j^{(\ell-1)}(Y)$}\label{DKSL2Ex}

The general product formula for the $D_j^{(\ell)}(Y)$ is
\begin{align*}
D_j^{(\ell)}(Y) &= t^{-\frac12(\ell+1)}\cdot
t^{\ell-j} \cdot 
\genfrac[]{0pt}{0}{\ell}{j}_{q,t}\cdot \frac{(1-tq^{\ell-j})}{(1-tq^\ell)}\cdot 
\genfrac(){0pt}{0}{\ell}{j}_{Y}\cdot \frac{(1-tY^{-2}q^{\ell-j})}{(1-tY^{-2}q^{\ell-2j})} 
\end{align*}
The first few of the $D_j^{(\ell-1)}(Y)$ are
\begin{align*}
D_0^{(0)}(Y) &= t^{-\frac12}\cdot 1, \\
\\
D_0^{(1)}(Y) &= t^{-\frac22}\cdot
\frac{(1-tY^2q^{-1})}{(1-Y^2q^{-1})} = t^{-\frac22}\cdot t\cdot \frac{(1-t^{-1}Y^{-2}q)}{(1-Y^{-2}q)}
\\
D_1^{(1)}(Y) 
&= t^{-\frac22}\cdot qt\cdot  
\frac{(1-t)}{(1-tq)}
\cdot \frac{(1-t^{-1}Y^2)}{(1-Y^2q)}
= 
t^{-\frac22}\cdot \frac{(1-t)}{(1-tq)}
\cdot  \frac{(1-tY^{-2})}{(1-Y^{-2}q^{-1})}, \\
\\
D_0^{(2)}(Y) 
&= t^{-\frac32}\cdot \frac{(1-tY^2q^{-2})(1-tY^2q^{-1})}{(1-Y^2q^{-2})(1-Y^2q^{-1})}
= t^{-\frac32}\cdot t^2\cdot \frac{ (1-t^{-1}Y^{-2}q)(1-t^{-1}Y^{-2}q^2) }{ (1-Y^{-2}q)(1-Y^{-2}q^2) }
\\
D_1^{(2)}(Y) 
&=  t^{-\frac32}\cdot qt\cdot 
\frac{(1-q^2)(1-t)}{(1-q)(1-tq^2)}
\cdot \frac{ (1-tY^2) }{ (1-Y^2q^{-1}) }\cdot \frac{(1-t^{-1}Y^2q^{-1})}{(1-Y^2q)}, \\
&=  t^{-\frac32}\cdot t\cdot 
\frac{(1-q^2)(1-t)}{(1-q)(1-tq^2)}
\cdot \frac{ (1-t^{-1}Y^{-2}) }{ (1-Y^{-2}q) }\cdot \frac{(1-tY^{-2}q)}{(1-Y^{-2}q^{-1})}
\\
D_2^{(2)}(Y) 
&= t^{-\frac32}\cdot q^2t^2\cdot 
\frac{(1-t)}{(1-tq^2)}
\cdot  \frac{(1-t^{-1}Y^2)(1-t^{-1}Y^2q)}{(1-Y^2q)(1-Y^2q^2)} \\
&= 
t^{-\frac32}\cdot \frac{(1-t)}{(1-tq^2)}
\cdot \frac{ (1-tY^{-2}q^{-1})(1-tY^{-2}) }{ (1-Y^{-2}q^{-2})(1-Y^{-2}q^{-1}) }.
\end{align*}

\newpage

\subsection{Examples of the $K_j^{(\ell)}(Y)$}

The general product formula for the $K_j^{(\ell)}(Y)$ for $\ell\in \ZZ_{>0}$ is
\begin{align*}
K_j^{(\ell)}(Y) 
&= t^{-\frac12(\ell-1)}\cdot t^{\ell-1-j}\cdot \genfrac[]{0pt}{0}{\ell}{j}_{q,t} \cdot
\genfrac(){0pt}{0}{\ell}{j}_{Y} \cdot
\frac{(1-Y^{-2}q^{\ell-2j})}{(1-t^{-1}Y^{-2}q^{\ell-2j})}\cdot
\frac{(1-t^{-1}Y^{-2})}{(1-Y^{-2})}
\end{align*}
The first few of the $K_j^{(\ell)}(Y)$ are
\begin{align*}
K_0^{(0)}(Y) &= (t^{\frac12}+t^{-\frac12})
\qquad (\hbox{since $\mathbf{1}_0E_0(X)\mathbf{1}_0 = \mathbf{1}_0^2 = \mathbf{1}_0 (t^{\frac12}+t^{-\frac12})$}), \\
\\
K_0^{(1)}(Y) &= 1\cdot  1\cdot
\frac{\cancel{(1-t^{-1}Y^{-2}q)} }{ \cancel{(1-Y^{-2}q)} }\cdot 
\frac{ \cancel{(1-Y^{-2}q)} }{ \cancel{(1-t^{-1}Y^{-2}q)} } \cdot
\frac{(1-t^{-1}Y^{-2})}{(1-Y^{-2})}, \\
K_1^{(1)}(Y) &= 1\cdot t^{-1}\cdot \frac{(1-tY^{-2}q^{-1})}{\cancel{(1-Y^{-2}q^{-1})} }\cdot 
\frac{\cancel{(1-Y^{-2}q^{-1})} }{(1-t^{-1}Y^{-2}q^{-1})} \cdot
\frac{(1-t^{-1}Y^{-2})}{(1-Y^{-2})}, \\
\\
K_0^{(2)}(Y) &= t^{-\frac12} \cdot t\cdot 
\frac{(1-t^{-1}Y^{-2}q)\cancel{(1-t^{-1}Y^{-2}q^2)}}{(1-Y^{-2}q)\cancel{(1-Y^{-2}q^2)}}\cdot
\frac{\cancel{(1-Y^{-2}q^2)}}{\cancel{(1-t^{-1}Y^{-2}q^2)} } \cdot
\frac{(1-t^{-1}Y^{-2})}{(1-Y^{-2})}
\\
K_1^{(2)}(Y) &= t^{-\frac12}\cdot 1\cdot
\frac{(1-q^2)(1-t)}{(1-q)(1-tq)}\cdot 
\frac{ \cancel{(1-t^{-1}Y^{-2})} }{(1-Y^{-2}q)}\cdot \frac{(1-tY^{-2})}{(1-Y^{-2}q^{-1})} \cdot
\frac{ \cancel{(1-Y^{-2})} }{\cancel{(1-t^{-1}Y^{-2})} } \cdot
\frac{(1-t^{-1}Y^{-2})}{\cancel{(1-Y^{-2})} }
\\
K_2^{(2)}(Y) &=t^{-\frac12} \cdot t^{-1}\cdot
\frac{(1-tY^{-2}q^{-2})(1-tY^{-2}q^{-1})}{ \cancel{(1-Y^{-2}q^{-2})} (1-Y^{-2}q^{-1}) }\cdot
\frac{ \cancel{(1-Y^{-2}q^{-2})} }{(1-t^{-1}Y^{-2}q^{-2})} \cdot
\frac{(1-t^{-1}Y^{-2})}{(1-Y^{-2})}
\end{align*}
\begin{align*}
K_0^{(3)}(Y) &=t^{-1}\cdot t^2\cdot
\frac{(1-t^{-1}Y^{-2}q)(1-t^{-1}Y^{-2}q^2)(1-t^{-1}Y^{-2}q^3)}{(1-Y^{-2}q)(1-Y^{-2}q^2)(1-Y^{-2}q^3)} \cdot
\frac{(1-Y^{-2}q^{3})}{(1-t^{-1}Y^{-2}q^{3})} \cdot
\frac{(1-t^{-1}Y^{-2})}{(1-Y^{-2})}
\\
K_1^{(3)}(Y) &=t^{-1}\cdot t\cdot
\frac{(1-t)(1-q^3)}{(1-q)(1-tq^2)}\cdot
\frac{(1-t^{-1}Y^{-2})(1-t^{-1}Y^{-2}q)}{(1-Y^{-2}q)(1-Y^{-2}q^2)}
\cdot \frac{(1-tY^{-2}q)}{(1-Y^{-2}q^{-1})}
\\
&\qquad
\cdot \frac{(1-Y^{-2}q)}{(1-t^{-1}Y^{-2}q)} \cdot
\frac{(1-t^{-1}Y^{-2})}{(1-Y^{-2})}
\\
K_2^{(3)}(Y) &= t^{-1}\cdot 1\cdot
\frac{(1-t)(1-q^3)}{(1-q)(1-tq^2)}\cdot
\frac{(1-t^{-1}Y^{-2}q^{-1})}{(1-Y^{-2}q)}
\cdot \frac{(1-tY^{-2}q^{-1})(1-tY^{-2})}{(1-Y^{-2}q^{-2})(1-Y^{-2}q^{-1})} 
\\
&\qquad
\cdot
\frac{(1-Y^{-2}q^{-1})}{(1-t^{-1}Y^{-2}q^{-1})} \cdot
\frac{(1-t^{-1}Y^{-2})}{(1-Y^{-2})}
\\
K_3^{(3)}(Y) &= 
t^{-1}\cdot t^{-1}\cdot
\frac{(1-tY^{-2}q^{-3})(1-tY^{-2}q^{-2})(1-tY^{-2}q^{-1})}
{(1-Y^{-2}q^{-3})(1-Y^{-2}q^{-2})(1-Y^{-2}q^{-1})} \cdot
\frac{(1-Y^{-2}q^{-3})}{(1-t^{-1}Y^{-2}q^{-3})} \cdot
\frac{(1-t^{-1}Y^{-2})}{(1-Y^{-2})}
\end{align*}

\newpage

\subsection{Examples of $K_j^{(\ell)}(Y)$ to $C_j^{(\ell)}(Y)$}

The following are examples of the identity \eqref{KtoC} from Proposition \ref{ABCtoDK} which says
$$K_j^{(\ell)}(Y)\cdot t^{-\frac12(\ell-2j)}\cdot \frac{(t^{-1}Y^{-2}q^{\ell-2j};q)_j}{(Y^{-2}q^{\ell-2j};q)_j}\cdot 
\frac{(Y^{-2};q)_{\ell-j}}{(t^{-1}Y^{-2};q)_{\ell-j}} = C_j^{(\ell)}(Y),
$$
where
$$C_j^{(\ell)}(Y) 
= \genfrac[]{0pt}{0}{\ell}{j}_{q,t} \cdot 
\frac{(t^{-1}Y^{-2}q^{-(j-1)};q)_j (tY^{-2} q^{\ell-2j};q)_j}
{(Y^{-2}q^{\ell-2j+1};q)_j (Y^{-2} q^{-j};q)_j}.
$$
The case $\ell=0$.
\begin{align*}
K_0^{(0)}(Y)\cdot &t^{-\frac12(0-2\cdot 0)}\cdot \frac{(t^{-1}Y^{-2}q^{\ell-2j};q)_0}{(Y^{-2}q^{\ell-2j};q)_0}\cdot
\frac{(Y^{-2};q)_0 }{(t^{-1}Y^{-2};q)_0 }  \\
&=(t^{\frac12}+t^{-\frac12})\cdot 1\cdot \frac{1}{1}\cdot \frac{1}{1} 
= t^{\frac12}+t^{-\frac12}.
\end{align*}
The case $\ell=1$.
\begin{align*}
K_0^{(1)}(Y)\cdot &t^{-\frac12(1-2\cdot 0)}\cdot \frac{(t^{-1}Y^{-2}q^{\ell-2j};q)_0}{(Y^{-2}q^{\ell-2j};q)_0}\cdot
\frac{(Y^{-2};q)_1 }{(t^{-1}Y^{-2};q)_1 } \\
&=\frac{(1-t^{-1}Y^{-2})}{(1-Y^{-2})}\cdot  t^{-\frac12} \frac{1}{1}\cdot \frac{(1-Y^{-2})}{(1-t^{-1}Y^{-2})} 
= t^{-\frac12}.
\end{align*}
\begin{align*}
K_1^{(1)}(Y)\cdot &t^{-\frac12(1-2\cdot 1)}\cdot \frac{(t^{-1}Y^{-2}q^{1-2\cdot1};q)_1}{(Y^{-2}q^{1-2\cdot1};q)_1}\cdot
\frac{(Y^{-2};q)_0 }{(t^{-1}Y^{-2};q)_0 } \\
&=t^{-1} \frac{(1-tY^{-2}q^{-1})}{(1-t^{-1}Y^{-2}q^{-1})}\frac{(1-t^{-1}Y^{-2})}{(1-Y^{-2})}
\cdot  t^{\frac12} \frac{(1-t^{-1}Y^{-2}q^{-1})}{(1-Y^{-2}q^{-1})} \cdot \frac{1}{1}
\\
&= t^{-\frac12} \frac{(1-t^{-1}Y^{-2})}{(1-Y^{-2})} \frac{(1-tY^{-2}q^{-1})}{(1-Y^{-2}q^{-1})}.
\end{align*}
The case $\ell=2$.
\begin{align*}
K_0^{(2)}(Y)\cdot &t^{-\frac12(2-2\cdot 0)}\cdot \frac{(t^{-1}Y^{-2}q^{\ell-2j};q)_0}{(Y^{-2}q^{\ell-2j};q)_0}\cdot
\frac{(Y^{-2};q)_2 }{(t^{-1}Y^{-2};q)_2 }  \\
&=t^{\frac12}\frac{(1-t^{-1}Y^{-2}q)(1-t^{-1}Y^{-2})}{(1-Y^{-2}q)(1-Y^{-2})}\cdot 
t^{-1}\cdot \frac{1}{1}\cdot \frac{(1-Y^{-2})(1-Y^{-2}q)}{(1-t^{-1}Y^{-2})(1-t^{-1}Y^{-2}q)}
=t^{-\frac12}.
\end{align*}
\begin{align*}
K_1^{(2)}(Y)\cdot &t^{-\frac12(2-2\cdot 1)}\cdot \frac{(t^{-1}Y^{-2}q^{\ell-2j};q)_1}{(Y^{-2}q^{\ell-2j};q)_1}\cdot
\frac{(Y^{-2};q)_1 }{(t^{-1}Y^{-2};q)_1 }  \\
&=t^{-\frac12}\frac{(1-q^2)(1-t)}{(1-q)(1-tq)} \frac{(1-tY^{-2})(1-t^{-1}Y^{-2})}{(1-Y^{-2}q)(1-Y^{-2}q^{-1})}\cdot 
1\cdot \frac{(1-t^{-1}Y^{-2})}{(1-Y^{-2})}\cdot \frac{(1-Y^{-2})}{(1-t^{-1}Y^{-2})}
\\
&=t^{-\frac12}\frac{(1-q^2)(1-t)}{(1-q)(1-tq)} \frac{(1-tY^{-2})(1-t^{-1}Y^{-2})}{(1-Y^{-2}q)(1-Y^{-2}q^{-1})}.
\end{align*}
\begin{align*}
K_2^{(2)}(Y)\cdot &t^{-\frac12(2-2\cdot 2)}\cdot \frac{(t^{-1}Y^{-2}q^{2-2\cdot 2};q)_2}{(Y^{-2}q^{2-2\cdot 2};q)_2}\cdot
\frac{(Y^{-2};q)_0 }{(t^{-1}Y^{-2};q)_0 }  \\
&=t^{-\frac32} \frac{(1-tY^{-2}q^{-2})(1-tY^{-2}q^{-1})(1-t^{-1}Y^{-2})}{(1-Y^{-2}q^{-1})(1-t^{-1}Y^{-2}q^{-2})(1-Y^{-2})}\cdot 
t^{-1}\cdot \frac{(1-t^{-1}Y^{-2}q^{-2})(1-t^{-1}Y^{-2}q^{-1}) }{ (1-Y^{-2}q^{-2})(1-Y^{-2}q^{-1})}\cdot \frac{1}{1}
\\
&=t^{-\frac12} \frac{(1-tY^{-2}q^{-2})(1-t Y^{-2}q^{-1})}{(1-Y^{-2}q^{-2})(1-Y^{-2}q^{-1})}
\cdot\frac{(1-t^{-1}Y^{-2}q^{-1})(1-t^{-1}Y^{-1})}{(1-Y^{-2}q^{-1})(1-Y^{-2})}
\end{align*}

\newpage	

\subsection{Examples of $E_\ell(x)$ and $P_\ell(x)$}
	
\begin{align*}
\vdots\quad \\
E_{-2}(x) &= x^{-2} + \frac{(1-t)(1-q^2)}{(1-q)(1-q^2t)} + \dfrac{(1-t)}{(1-q^2t)}x^{2} \\
E_{-1}(x) &= x^{-1} + \frac{1-t}{1-qt}x, \\
E_0(x) &= 1, \\
E_1(x) &= x, \\
E_2(x) &= x^2 + q \frac{(1-t)}{(1-qt)} , \\
E_3(x) &= x^3 + \Big(\frac{(1-t)q}{(1-tq)}+\frac{(1-t)q^2}{(1-tq^2)}\frac{(1-t)}{(1-tq)}\Big)x
+\frac{(1-t)q^2}{(1-tq^2)}x^{-1}, \\
\vdots\quad
\end{align*}
and
\begin{align*}
P_0(x) &= 1, \\
P_1(x) &= x + x^{-1}, \\
P_2(x) &= (x^2 + x^{-2}) + \frac{(1-q^2)(1-t)}{(1-q)(1-qt)} , \\
P_3(x) &= (x^3 + x^{-3}) + \dfrac{(1-q^3)(1-t)}{(1-q^2t)(1-q)}(x+x^{-1}), \\
P_4(x) &= (x^4 + x^{-4}) + \dfrac{(1-q^4)(1-t)}{(1-q^3t)(1-q)}(x^2 + x^{-2}) 
+ \dfrac{(1-q^4)(1-q^3)(1-qt)(1-t)}{(1-q^3t)(1-q^2t)(1-q^2)(1-q)}, \\
\vdots\quad
\end{align*}

\newpage

\subsection{Examples of products $E_\ell P_m$}

\begin{align*}
E_1 P_m 
&= E_{m+1}+\frac{(1-q^m)}{(1-tq^m)}E_{-m+1} 
= E_{m+1}+\ev_m\Big(\frac{(1-t^{-1}Y^{-2})}{(1-Y^{-2})}\Big)E_{-m+1}, \\
\\
E_2 P_m 
&= E_{m+2}+\frac{(1-t)}{(1-tq)}\cdot \frac{(1-q^m)}{(1-tq^{m-1})}\cdot \frac{(1-t^2 q^m)}{(1-tq^m)}
E_m \\
&\qquad
+ \frac{(1-q^{m-1})}{(1-tq^{m-1})}\frac{(1-q^m)}{(1-t q^m)} \cdot \frac{(1-t^2q^{m-1})}{(1-t q^{m-1})} E_{-m+2}
+q\frac{(1-t)}{(1-tq)}\cdot \frac{(1-q^m)}{(1-t q^{m+1})}E_{-m}
\\
\\
&= E_{m+2}+ \genfrac\{\}{0pt}{0}{1}{1}_{q,t} \ev_m\Big(\frac{(1-t^{-1}Y^{-2})}{(1-Y^{-2}q^{-1})}\cdot 
\frac{(1-tY^{-2})}{(1-Y^{-2})}\Big)E_m
\\
&\qquad
+ \ev_m\Big(\frac{(1-t^{-1}Y^{-2}q^{-1})}{(1-Y^{-2}q^{-1})}\frac{(1-t^{-1}Y^{-2})}{(1-Y^{-2})} \cdot 
\frac{(1-tY^{-2}q^{-1})}{(1-Y^{-2}q^{-1})}\Big) E_{-m+2}
\\
&\qquad
+q\genfrac\{\}{0pt}{0}{1}{1}_{q,t} \ev_m\Big(\frac{(1-t^{-1}Y^{-2})}{(1-Y^{-2}q)}\Big)E_{-m},
\end{align*}
\begin{align*}
E_3 P_m
&= E_{m+3} 
+ \frac{(1-t)(1-q^2)}{(1-q)(1-t q^2)}\cdot
\frac{(1-q^m)}{(1-t q^{m-1})}\cdot
\frac{(1-t^2 q^{m+1})}{(1-t q^{m+1})} 
\ E_{m+1}
\\
&\qquad
+\frac{(1-t)}{(1-t q^2)}\cdot
\frac{(1-q^{m-1})(1-q^m)}{(1- t q^{m-2})(1- t q^{m-1})}\cdot
\frac{(1-t^2 q^{m-1})(1-t^2 q^m)}{(1- t q^{m-1})(1- t q^m)}
\ E_{m-1}
\\
&\qquad
+ \frac{(1-q^{m-2})(1-q^{m-1}) (1-q^m)}{(1-tq^{m-2})(1-tq^{m-1})(1-tq^m )}\cdot
\frac{(1-t^2 q^{m-2})(1-t^2 q^{m-1}) }
{ (1-tq^{m-2})(1-tq^{m-1}) }
\ E_{-m+3}
\\
&\qquad
+q\frac{(1-t)(1-q^2)}{(1-q)(1-tq^2)}\cdot
\frac{(1-q^{m-1})(1-q^m)}{(1-tq^m )(1-tq^{m+1}) }\cdot
\frac{(1-t^2 q^m)}{(1-tq^{m-1})}
\ E_{-m+1} 
\\
&\qquad + q^2\frac{(1-t)}{(1- t q^2)}\cdot
\frac{(1-q^m)}{(1- t q^{m+2})} 
\ E_{-m-1} 
\\
\\
&= E_{m+3} 
+ \genfrac\{\}{0pt}{0}{2}{1}_{q,t} \ev_m\Big(
\frac{(1-t^{-1}Y^{-2})}{(1-Y^{-2}q^{-1})}\cdot
\frac{(1-tY^{-2}q)}{(1-Y^{-2}q)} \Big)
E_{m+1}
\\
&\qquad
+\genfrac\{\}{0pt}{0}{2}{2}_{q,t} \ev_m\Big(
\frac{(1-t^{-1}Y^{-2}q^{-1})(1-t^{-1}Y^{-2})}{(1- Y^{-2}q^{-2})(1- Y^{-2}q^{-1})}\cdot
\frac{(1-tY^{-2}q^{-1})(1-tY^{-2})}{(1- Y^{-2}q^{-1})(1- Y^{-2})}\Big)
E_{m-1}
\\
&\qquad
+ \ev_m\Big(
\frac{(1-t^{-1}Y^{-2}q^{-2})(1-t^{-1}Y^{-2}q^{-1}) (1-t^{-1}Y^{-2})}{(1-Y^{-2}q^{-2})(1-Y^{-2}q^{-1})(1-Y^{-2} )}\cdot
\frac{(1-tY^{-2}q^{-2})(1-tY^{-2}q^{-1}) }
{ (1-Y^{-2}q^{-2})(1-Y^{-2}q^{-1}) }\Big)
\ E_{-m+3}
\\
&\qquad
+q \genfrac\{\}{0pt}{0}{2}{1}_{q,t} \ev_m\Big(
\frac{(1-t^{-1}Y^{-2}q^{-1})(1-t^{-1}Y^{-2})}{(1-Y^{-2} )(1-Y^{-2}q) }\cdot
\frac{(1-tY^{-2})}{(1-Y^{-2}q^{-1})}\Big)
\ E_{-m+1} 
\\
&\qquad 
+q^2\genfrac\{\}{0pt}{0}{2}{2}_{q,t} \ev_m\Big(
\frac{(1-t^{-1}Y^{-2})}{(1- Y^{-2}q^{2})} \Big)
E_{-m-1}.
\end{align*}

\newpage

\subsection{General formulas for $E_\ell P_m$ and $E_{-\ell}P_m$}

The general formula for $E_\ell P_m$ with $\ell\in \ZZ_{>0}$ is
\begin{align*}
&E_\ell(x) P_m(x) \\
&= \sum_{j=0}^{\ell-1} 
\genfrac\{\}{0pt}{0}{\ell-1}{j}_{q,t} \ev_m\Big( 
\frac{1-t^{-1}Y^{-2}q^{-(j-1)..0})(1-tY^{-2}q^{\ell-2j..\ell-1-j})}
{(1-Y^{-2}q^{-j..-1})(1-Y^{-2}q^{\ell-2j..\ell-1-j})}\Big)E_{m+\ell-2j}(x)
\\
&\qquad+ \sum_{j=0}^{\ell-1} 
q^j \genfrac\{\}{0pt}{0}{\ell-1}{j}_{q,t} \ev_m\Big( 
\frac{1-t^{-1}Y^{-2}q^{-(\ell-j-1)..0})(1-tY^{-2}q^{-(\ell-2j)..j-1})}{(1-Y^{-2}q^{j-(\ell-j-1)..j})(1-Y^{-2}q^{-(\ell-j-1)..-1})}\Big)E_{-m+\ell-2j}(x)
\\
&= \sum_{j=0}^{\ell-1} 
\genfrac\{\}{0pt}{0}{\ell-1}{j}_{q,t} \ev_m\Big( 
\frac{(t^{-1}Y^{-2}q^{-(j-1)};q)_j (tY^{-2}q^{\ell-2j};q)_{j} }{ (Y^{-2}q^{-j};q)_j (Y^{-2}q^{\ell-2j})_j}\Big)E_{m+\ell-2j}(x)
\\
&\qquad+ \sum_{j=0}^{\ell-1} 
q^j \genfrac\{\}{0pt}{0}{\ell-1}{j}_{q,t} \ev_m\Big( 
\frac{(t^{-1}Y^{-2}q^{-(\ell-j-1)};q)_{\ell-j} (tY^{-2}q^{-(\ell-2j-1)};q)_{\ell-j-1} }
{ (Y^{-2}q^{-(\ell-2j-1)};q)_{\ell-j} (Y^{-2}q^{-(\ell-j-1)};q)_{\ell-j-1} }\Big)
E_{-m+\ell-2j}(x)
\\
&= \sum_{j=0}^{\ell-1} 
\ev_m(A^{(\ell)}_m(Y)) E_{m+\ell-2j}(x)
+ \sum_{j=0}^{\ell-1} \ev_m(B^{(\ell)}_j(Y)) E_{-m+\ell-2j}(x),
\end{align*}
where
\begin{align*}
A^{(\ell)}_j(Y) 
&=\genfrac\{\}{0pt}{0}{\ell-1}{j}_{q,t}
\frac{(t^{-1}Y^{-2}q^{-(j-1)};q)_j (tY^{-2}q^{\ell-2j};q)_{j} }{ (Y^{-2}q^{-j};q)_j (Y^{-2}q^{\ell-2j})_j}
\qquad\hbox{and} 
\\
B^{(\ell)}_j(Y) 
&= q^j \genfrac\{\}{0pt}{0}{\ell-1}{j}_{q,t} 
\frac{(t^{-1}Y^{-2}q^{-(\ell-j-1)};q)_{\ell-j} (tY^{-2}q^{-(\ell-2j-1)};q)_{\ell-j-1} }
{ (Y^{-2}q^{-(\ell-2j-1)};q)_{\ell-j} (Y^{-2}q^{-(\ell-j-1)};q)_{\ell-j-1} }.
\end{align*}
The general formula for $E_{-\ell}P_m$ with $\ell\in \ZZ_{\ge 0}$ is 
\begin{align*}
&E_{-\ell}(x) P_m(x) \\
&= \sum_{j=0}^{\ell} 
\genfrac\{\}{0pt}{0}{\ell}{j}_{q,t} \ev_m\Big( 
\frac{1-t^{-1}Y^{-2}q^{-(j-1)..0})(1-tY^{-2}q^{\ell+1-2j..\ell-j})}
{(1-Y^{-2}q^{-j..-1})(1-Y^{-2}q^{\ell+1-2j..\ell-j})}\Big)E_{-m-\ell+2j}(x)
\\
&\qquad+ \sum_{j=0}^{\ell} 
t\cdot q^j \genfrac\{\}{0pt}{0}{\ell}{j}_{q,t} \ev_m\Big( 
\frac{1-t^{-1}Y^{-2}q^{-(\ell-j)..0})(1-tY^{-2}q^{-(\ell+1-2j)..j-1})}{(1-Y^{-2}q^{j-(\ell-j)..j})(1-Y^{-2}q^{-(\ell-j)..-1})}\Big)E_{m-\ell+2j}(x)
\\
&= \sum_{j=0}^{\ell} 
\genfrac\{\}{0pt}{0}{\ell}{j}_{q,t} \ev_m\Big( 
\frac{(t^{-1}Y^{-2}q^{-(j-1)};q)_j (tY^{-2}q^{\ell+1-2j};q)_{j} }{ (Y^{-2}q^{-j};q)_j (Y^{-2}q^{\ell+1-2j})_j}\Big)E_{-m-\ell+2j}(x)
\\
&\qquad+ \sum_{j=0}^{\ell} 
t\cdot q^j \genfrac\{\}{0pt}{0}{\ell}{j}_{q,t} \ev_m\Big( 
\frac{(t^{-1}Y^{-2}q^{-(\ell-j)};q)_{\ell+1-j} (tY^{-2}q^{-(\ell-2j)};q)_{\ell-j} }
{ (Y^{-2}q^{-(\ell-2j)};q)_{\ell-j} (Y^{-2}q^{-(\ell-j)};q)_{\ell-j} }\Big)
E_{m-\ell+2j}(x)
\\
&= \sum_{j=0}^{\ell} \ev_m(A^{(\ell+1)}_j(Y)) E_{-m-\ell+2j}(x)
+ \sum_{j=0}^{\ell} t\cdot \ev_m(B^{(\ell+1)}_j(Y)) E_{m-\ell+2j}(x).
\end{align*}

\newpage
		
\subsection{Examples of products $E_{-\ell+1} P_m$}
		
\begin{align*}
E_0 P_m 
&= E_{-m}+t\frac{(1-q^m)}{(1-tq^m)}E_{m} 
= E_{-m}+t\cdot\ev_m\Big(\frac{(1-t^{-1}Y^{-2})}{(1-Y^{-2})}\Big)E_{m}, \\
\\
E_{-1} P_m 
&= E_{-m-1}+\frac{(1-t)}{(1-tq)}\cdot \frac{(1-q^m)}{(1-tq^{m-1})}\cdot \frac{(1-t^2 q^m)}{(1-tq^m)}
E_{-m+1} \\
&\qquad
+ t\frac{(1-q^{m-1})}{(1-tq^{m-1})}\frac{(1-q^m)}{(1-t q^m)} \cdot \frac{(1-t^2q^{m-1})}{(1-t q^{m-1})} E_{m-1}
+tq\frac{(1-t)}{(1-tq)}\cdot \frac{(1-q^m)}{(1-t q^{m+1})}E_{m+1}
\\
\\
&= E_{-m-1}+ \genfrac\{\}{0pt}{0}{1}{1}_{q,t} \ev_m\Big(\frac{(1-t^{-1}Y^{-2})}{(1-Y^{-2}q^{-1})}\cdot 
\frac{(1-tY^{-2})}{(1-Y^{-2})}\Big)E_{-m+1}
\\
&\qquad
+ t\cdot\ev_m\Big(\frac{(1-t^{-1}Y^{-2}q^{-1})}{(1-Y^{-2}q^{-1})}\frac{(1-t^{-1}Y^{-2})}{(1-Y^{-2})} \cdot 
\frac{(1-tY^{-2}q^{-1})}{(1-Y^{-2}q^{-1})}\Big) E_{m-1}
\\
&\qquad
+t\cdot q\genfrac\{\}{0pt}{0}{1}{1}_{q,t} \ev_m\Big(\frac{(1-t^{-1}Y^{-2})}{(1-Y^{-2}q)}\Big)E_{m+1},
\end{align*}
\begin{align*}
E_{-2} P_m
&= E_{-m-2} 
+ \frac{(1-t)(1-q^2)}{(1-q)(1-t q^2)}\cdot
\frac{(1-q^m)}{(1-t q^{m-1})}\cdot
\frac{(1-t^2 q^{m+1})}{(1-t q^{m+1})} 
\ E_{-m}
\\
&\qquad
+\frac{(1-t)}{(1-t q^2)}\cdot
\frac{(1-q^{m-1})(1-q^m)}{(1- t q^{m-2})(1- t q^{m-1})}\cdot
\frac{(1-t^2 q^{m-1})(1-t^2 q^m)}{(1- t q^{m-1})(1- t q^m)}
\ E_{-m+2}
\\
&\qquad
+ t\cdot \frac{(1-q^{m-2})(1-q^{m-1}) (1-q^m)}{(1-tq^{m-2})(1-tq^{m-1})(1-tq^m )}\cdot
\frac{(1-t^2 q^{m-2})(1-t^2 q^{m-1}) }
{ (1-tq^{m-2})(1-tq^{m-1}) }
\ E_{m-2}
\\
&\qquad
+t\cdot q\frac{(1-t)(1-q^2)}{(1-q)(1-tq^2)}\cdot
\frac{(1-q^{m-1})(1-q^m)}{(1-tq^{m-1})(1-tq^m )}\cdot
\frac{(1-t^2 q^m)}{(1-tq^{m+1})}
\ E_{m} 
\\
&\qquad + t\cdot q^2\frac{(1-t)}{(1- t q^2)}\cdot
\frac{(1-q^m)}{(1- t q^{m+2})} 
\ E_{m+2} 
\\
\\
&= E_{-m-2} 
+ \genfrac\{\}{0pt}{0}{2}{1}_{q,t} \ev_m\Big(
\frac{(1-t^{-1}Y^{-2})}{(1-Y^{-2}q^{-1})}\cdot
\frac{(1-tY^{-2}q)}{(1-Y^{-2}q)} \Big)
E_{-m}
\\
&\qquad
+\genfrac\{\}{0pt}{0}{2}{2}_{q,t} \ev_m\Big(
\frac{(1-t^{-1}Y^{-2}q^{-1})(1-t^{-1}Y^{-2})}{(1- Y^{-2}q^{-2})(1- Y^{-2}q^{-1})}\cdot
\frac{(1-tY^{-2}q^{-1})(1-tY^{-2})}{(1- Y^{-2}q^{-1})(1- Y^{-2})}\Big)
E_{-m+2}
\\
&\qquad
+ t\cdot \ev_m\Big(
\frac{(1-t^{-1}Y^{-2}q^{-2})(1-t^{-1}Y^{-2}q^{-1}) (1-t^{-1}Y^{-2})}{(1-Y^{-2}tq^{-2})(1-Y^{-2}tq^{-1})(1-Y^{-2} )}\cdot
\frac{(1-tY^{-2}q^{-2})(1-tY^{-2}q^{-1}) }
{ (1-Y^{-2}q^{-2})(1-Y^{-2}q^{-1}) }\Big)
\ E_{m-2}
\\
&\qquad
+t\cdot q \genfrac\{\}{0pt}{0}{2}{1}_{q,t} \ev_m\Big(
\frac{(1-t^{-1}Y^{-2}q^{-1})(1-t^{-1}Y^{-2})}{(1-Y^{-2}q^{-1})(1-Y^{-2} )}\cdot
\frac{(1-tY^{-2})}{(1-Y^{-2}q)}\Big)
\ E_{m} 
\\
&\qquad 
+t\cdot q^2\genfrac\{\}{0pt}{0}{2}{2}_{q,t} \ev_m\Big(
\frac{(1-t^{-1}Y^{-2})}{(1- Y^{-2}q^{2})} \Big)
E_{m+2}.
\end{align*}

\newpage

\subsection{Examples of products $P_\ell P_m$}

The general formula is
$$P_\ell(x)P_m(x) 
= \sum_{j=0}^\ell  \ev_m\big( C_j^{(\ell)}(Y)\big)
P_{m+\ell-2j}(x),
$$
where
$$
C_j^{(\ell)}(Y) 
= \genfrac[]{0pt}{0}{\ell}{j}_{q,t} \cdot
\frac{(1-t^{-1}Y^{-2}q^{-(j-1)..0})(1-tY^{-2} q^{\ell-2j..\ell-j-1})}
{(1-Y^{-2}q^{\ell-2j+1..\ell-j})(1-Y^{-2} q^{-j..-1})} .
$$
The first few cases are
\begin{align*}
P_1 P_m 
&= P_{m+1}+\frac{(1-q^m)}{(1-t q^m)}\frac{(1-t^2 q^{m-1})}{(1-tq^{m-1})} P_{m-1}
\\
&= P_{m+1}+\ev\Big( \frac{(1-t^{-1}Y^{-2})}{(1-Y^{-2})}\frac{(1-tY^{-2}q^{-1})}{(1-Y^{-2}q^{-1})} \Big)P_{m-1},
\\
\\
P_2 P_m
&=
P_{m+2} 
+
\frac{(1-q^2)(1-t)}{(1-tq)(1-q)}\cdot 
\frac{(1-q^m)}{(1- t q^{m+1})}\cdot \frac{(1- t^2 q^m)}{(1- t q^{m-1})} P_m \\
&\qquad +  \frac{(1-q^{m-1}) (1-q^m) }{ (1- t q^{m-1}) (1- t q^m) }\cdot
\frac{ (1- t^2 q^{m-2})(1- t^2 q^{m-1}) }{ (1- t q^{m-2})(1- t q^{m-1}) } P_{m-2}
\\
\\
&=
P_{m+2} 
+ \genfrac[]{0pt}{0}{2}{1}_{q,t} \ev\Big(
\frac{(1-t^{-1}Y^{-2})}{(1- Y^{-2}q)}\cdot \frac{(1- tY^{-2})}{(1- Y^{-2}q^{-1})} \Big) P_m 
\\
&\qquad + \ev\Big(
\frac{(1-t^{-1}Y^{-2}q^{-1}) (1-t^{-1}Y^{-2}) }{ (1- Y^{-2}q^{-1}) (1- Y^{-2}) }\cdot
\frac{ (1- tY^{-2}q^{-2})(1- tY^{-2}q^{-1}) }{ (1- Y^{-2}q^{-2})(1- Y^{-2}q^{-1}) }  \Big) P_{m-2},
\\
\\
P_3  P_m
&= P_{m+3}
+ \frac{(1-t)(1-q^3)}{(1-q)(1-tq^2)}\cdot
\frac{(1-q^m)}{(1-tq^{m+2})}\cdot
\frac{(1-t^2q^{m+1})}{(1-tq^{m-1})}
\ P_{m+1}
\\
&\qquad
+\frac{(1-t)(1-q^3) }{ (1-q)(1- t q^2)}\cdot
\frac{(1-q^{m-1})(1-q^m)}{ (1- t q^m)(1- t q^{m+1}) }  \cdot
\frac{(1- t^2 q^{m-1})(1- t^2 q^m )}{ (1- t q^{m-2})(1- t q^{m-1}) }
\ P_{m-1}
\\
&\qquad
+\frac{(1-q^{m-2})(1-q^{m-1})(1-q^m)}{(1- t q^{m-2})(1- t q^{m-1})(1- t q^m)}
\cdot \frac{(1- t^2 q^{m-3})(1- t^2 q^{m-2})(1- t^2 q^{m-1})}{(1- t q^{m-3})(1- t q^{m-2})(1- t q^{m-1})}
P_{m-3}
\\
\\
&= P_{m+3}
+ \genfrac[]{0pt}{0}{3}{1}_{q,t} \ev\Big(
\frac{(1-t^{-1}Y^{-2})}{(1-Y^{-2}q^{2})}\cdot
\frac{(1-tY^{-2}q)}{(1-Y^{-2}q^{-1})}\Big)
P_{m+1}
\\
&\qquad
+ \genfrac[]{0pt}{0}{3}{2}_{q,t} \ev\Big(
\frac{(1-t^{-1}Y^{-2}q^{-1})(1-t^{-1}Y^{-2})}{ (1- Y^{-2})(1- Y^{-2}q) }  \cdot
\frac{(1- tY^{-2}q^{-1})(1- tY^{-2} )}{ (1- Y^{-2}q^{-2})(1- Y^{-2}q^{-1}) }\Big)
P_{m-1}
\\
&\qquad
+\ev\Big( \frac{(1-t^{-1}Y^{-2}q^{-2..0})}{(1- Y^{-2}q^{-2..0})}
\cdot \frac{(1- tY^{-2}q^{-3..-1})}{(1- Y^{-2}q^{-3..-1})}
\Big)
P_{m-3}
\end{align*}

\newpage

\subsection{Proof of the $q$-$t$-binomial formulas for $E_\ell$ and $P_\ell$}\label{qtbintoMac}

\begin{prop}
The electronic Macdonald polynomials are given by
$$ E_{-\ell}(x) =  \sum_{j=0}^\ell \genfrac[]{0pt}{0}{\ell}{j}_{q,t}
\frac{(1-tq^j)}{(1-tq^{\ell})} x^{\ell-2j}
\qquad\hbox{and}\qquad
E_{\ell}(x) = \sum_{j=0}^{\ell-1} \genfrac[]{0pt}{0}{\ell-1}{j}_{q,t}
\frac{q^{\ell-1-j} (1-tq^j)}{(1-tq^{\ell-1})} x^{-\ell+2j+2},
$$
and the bosonic Macdonald polynomials are given by 
$$
P_\ell(x) =  \sum_{j=0}^\ell \genfrac[]{0pt}{0}{\ell}{j}_{q,t} x^{\ell-2j}.
$$
\end{prop}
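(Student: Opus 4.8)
The plan is to establish all three closed formulas by a single induction on $\ell$, generating every $E_m(x)$ from $E_0(x)=1$ and $E_1(x)=x$ by means of two \emph{tail-free} recursions in $\CC[x,x^{-1}]$, and then to read off the formula for $P_\ell$ from \eqref{Pdefn}. The $\ell=0$ and $\ell=1$ instances of the stated formulas give exactly $E_0=1$, $E_1=x$, $E_{-1}=x^{-1}+\tfrac{1-t}{1-tq}x$, so only the propagation requires work.

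The key device is to convert the intertwiner relations of \eqref{tauonE} into recursions that never apply $T_1$ to a general Laurent polynomial. Writing $\tau^\vee_1=T_1+t^{-\frac12}\tfrac{1-t}{1-Y^{-2}}$ as in \eqref{intdefn} and $\tau^\vee_\pi=XT_1$, and evaluating the rational functions of $Y$ on the eigenvector $E_{\pm\ell}(x)\mathbf 1_Y$ by the scalars read off from \eqref{Eeig}, I would first combine $\tau^\vee_\pi E_\ell\mathbf 1_Y=t^{-\frac12}E_{-(\ell-1)}\mathbf 1_Y$ with $\tau^\vee_1 E_\ell\mathbf 1_Y=t^{-\frac12}E_{-\ell}\mathbf 1_Y$ (using $Y^{-2}E_\ell=tq^\ell E_\ell$) to obtain
$$E_{-\ell}(x)=X^{-1}E_{-(\ell-1)}(x)+\frac{1-t}{1-tq^\ell}\,E_\ell(x).$$
Second, combining $\tau^\vee_\pi E_{-\ell}\mathbf 1_Y=t^{\frac12}E_{\ell+1}\mathbf 1_Y$ with the downward relation $\tau^\vee_1 E_{-\ell}\mathbf 1_Y=t^{-\frac12}\tfrac{(1-tY^2)(1-tY^{-2})}{(1-Y^2)(1-Y^{-2})}E_\ell\mathbf 1_Y$ of \eqref{tauonE} (again evaluating by \eqref{Eeig}) yields
$$E_{\ell+1}(x)=t^{-1}X\Big(\frac{(1-q^{-\ell})(1-t^2q^\ell)}{(1-t^{-1}q^{-\ell})(1-tq^\ell)}\,E_\ell(x)-\frac{1-t}{1-t^{-1}q^{-\ell}}\,E_{-\ell}(x)\Big).$$
Since $X^{\pm1}$ merely shifts exponents, the first identity expresses $E_{-\ell}$ in terms of $E_\ell$ and $E_{-(\ell-1)}$, the second expresses $E_{\ell+1}$ in terms of $E_\ell$ and $E_{-\ell}$, and interleaving them generates all $E_m$.

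The inductive step is then to substitute the conjectured $q$-$t$-binomial formulas into these two recursions and match the coefficient of each monomial $x^{\ell-2j}$. In each case the comparison is a two-term identity among the $\genfrac[]{0pt}{0}{\ell}{j}_{q,t}$ that, after clearing the $(1-tq^{\bullet})$ factors, reduces to the Pascal-type relations \eqref{qtbinrels}; the endpoints $j=0$ and $j=\ell$ are immediate from the leading-coefficient normalization. For the bosonic formula I would add the two closed forms via $P_\ell(x)=E_{-\ell}(x)+\tfrac{t(1-q^\ell)}{1-tq^\ell}E_\ell(x)$ from \eqref{Pdefn}, so that the coefficient of $x^{\ell-2j}$ is
$$\genfrac[]{0pt}{0}{\ell}{j}_{q,t}\frac{1-tq^j}{1-tq^\ell}+\frac{t(1-q^\ell)}{1-tq^\ell}\genfrac[]{0pt}{0}{\ell-1}{j}_{q,t}\frac{q^j(1-tq^{\ell-1-j})}{1-tq^{\ell-1}},$$
which collapses to $\genfrac[]{0pt}{0}{\ell}{j}_{q,t}$ by a single application of \eqref{qtbinrels}.

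The main obstacle here is structural rather than computational. A naive approach through the Demazure--Lusztig form of $T_1$ would force one to distribute the staircase $(t^{\frac12}-t^{-\frac12})\tfrac{x^r-x^{-r}}{1-x^2}$ across the $q$-$t$-binomial sum and then resum \emph{cumulative} partial sums of binomial coefficients, which is awkward. The real content of the argument is therefore the derivation of the two tail-free recursions above: exploiting that $\tau^\vee_1 E_{-\ell}$ is already proportional to $E_\ell$, and that the $\tfrac{1-t}{1-Y^{-2}}$ term acts by a scalar on $Y$-eigenvectors, removes the problematic $T_1$-action entirely and reduces every verification to the one Pascal identity \eqref{qtbinrels}. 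Once those recursions are in place, the remaining steps are routine rational-function manipulations.
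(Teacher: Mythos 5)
Your proposal is correct, and it takes a genuinely different route from the paper. The paper's proof is essentially a translation: it quotes Macdonald's closed formulas \cite[(6.2.7), (6.2.8), (6.3.7)]{Mac03}, written as sums of products of Gaussian binomial coefficients under the specialization $q^k=t$, and rewrites those Gaussian binomials as $\tfrac{(tq^{a-(b-1)};q)_b}{(q;q)_b}$ to arrive at the stated $q$-$t$-binomial forms; no recursion is run, and the real content is outsourced to \cite{Mac03}. Your argument is instead self-contained in the DAHA formalism the paper has already set up: both of your contiguous recursions do follow correctly from writing $\tau^\vee_1=X^{-1}\tau^\vee_\pi+t^{-\frac12}\tfrac{1-t}{1-Y^{-2}}$ (valid since $T_1=X^{-1}\tau^\vee_\pi$), applying this to $E_\ell\mathbf{1}_Y$ resp.\ $E_{-\ell}\mathbf{1}_Y$, evaluating the $Y$-rational functions by \eqref{Eeig}, and using that $\{X^k\mathbf{1}_Y\}$ is a basis to promote the module identity to an identity in $\CC[x,x^{-1}]$ --- the same device the paper itself uses to prove \eqref{E10}, so there is no hidden circularity. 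I checked the two coefficient comparisons your induction needs: for the first recursion, after dividing by $\genfrac[]{0pt}{0}{\ell-1}{j}_{q,t}$ and clearing denominators, the identity at $x^{\ell-2j}$ reduces to $(1-q^j)(1-tq^\ell)+(1-t)q^j(1-q^{\ell-j})=(1-q^\ell)(1-tq^j)$, and for the second it reduces to $(1-t^2q^\ell)(1-q^j)+q^j(1-t)(1-tq^{\ell-j})=(1-tq^j)(1-tq^\ell)$; both are one-line polynomial identities, and your collapse of the $P_\ell$ coefficient via \eqref{Pdefn} and the second relation of \eqref{qtbinrels} is likewise correct, including the endpoint cases. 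What each approach buys: the paper's proof is shorter but requires the external reference and the $q^k=t$ notational dictionary; yours keeps the proposition internal to the paper's own axiomatics (\eqref{intdefn}, \eqref{tauonE}, \eqref{Eeig}, \eqref{Pdefn}, \eqref{qtbinrels}) at the cost of two routine Pascal-type verifications, and it records as a by-product the two tail-free recursions $E_{-\ell}(x)=x^{-1}E_{-(\ell-1)}(x)+\tfrac{1-t}{1-tq^\ell}E_\ell(x)$ and the companion formula for $E_{\ell+1}$, which are of independent use.
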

\begin{proof}
Following \cite[\S6.2]{Mac03}, where $q^k=t$,
\begin{align*}
\genfrac[]{0pt}{0}{k+a}{b}
&=\frac{(q;q)_{k+a}}{(q;q)_b (q;q)_{k+a-b}}
=\frac{(1-q^{k+a-b+1})\cdots(1-q^{k+a})}{(q;q)_b}
\\
&=\frac{(1-tq^{a-b+1})\cdots(1-tq^a)}{(q;q)_b} = \frac{(tq^{a-(b-1)};q)_b}{(q;q)_b}.
\end{align*}
Then, from \cite[(6.2.7)]{Mac03},
\begin{align*}
E_{-m}
&= \genfrac[]{0pt}{0}{k+m}{m}^{-1} \sum_{i+j=m} 
\genfrac[]{0pt}{0}{k+i-1}{i}
\genfrac[]{0pt}{0}{k+j}{j}
x^{i-j}
\\
&= \frac{(q;q)_m}{(tq^{m-(m-1)};q)_m} \sum_{i+j=m}
\frac{(tq^{i-1-(i-1)};q)_i}{(q;q)_i}
\frac{(tq^{j-(j-1)};q)_j}{(q;q)_j} x^{i-j}
\\
&= \frac{(q;q)_m}{(tq;q)_m} \sum_{j=0}^m \frac{(t;q)_{m-j}}{(q;q)_{m-j}} \frac{(tq;q)_j}{(q;q)_j} x^{m-j-j}
\\
&= \frac{(1-t)}{(1-tq^m)} \frac{(q;q)_m}{(t;q)_m} 
\sum_{j=0}^m \frac{(t;q)_{m-j}}{(q;q)_{m-j}} \frac{(1-tq^j)}{(1-t)}\frac{(t;q)_j}{(q;q)_j} x^{m-2j}
\\
&=  \sum_{j=0}^m \genfrac[]{0pt}{0}{m}{j}_{q,t}
\frac{(1-tq^j)}{(1-tq^{m})} x^{m-2j}
\end{align*}
and, from \cite[(6.2.8)]{Mac03},
\begin{align*}
E_{m+1}
&= \genfrac[]{0pt}{0}{k+m}{m}^{-1} \sum_{i+j=m} 
\genfrac[]{0pt}{0}{k+i-1}{i}
\genfrac[]{0pt}{0}{k+j}{j}
q^i x^{-i+j+1}
=  \sum_{j=0}^m \genfrac[]{0pt}{0}{m}{j}_{q,t}
\frac{(1-tq^j)}{(1-tq^{m})} q^{m-j} x^{-m+2j+1}
\end{align*}
so that
$$E_m =  \sum_{j=0}^{m-1} \genfrac[]{0pt}{0}{m-1}{j}_{q,t}
\frac{(1-tq^j)}{(1-tq^{m-1})} q^{m-1-j} x^{-m+2j+2}.$$
From \cite[(6.3.7)]{Mac03},
\begin{align*}
P_m
&= \genfrac[]{0pt}{0}{k+m-1}{m}^{-1} \sum_{i+j=m} 
\genfrac[]{0pt}{0}{k+i-1}{i}
\genfrac[]{0pt}{0}{k+j-1}{j}
x^{i-j}
\\
&= \frac{(q;q)_m}{(tq^{m-1-(m-1)};q)_m} \sum_{i+j=m}
\frac{(tq^{i-1-(i-1)};q)_i}{(q;q)_i}
\frac{(tq^{j-1-(j-1)};q)_j}{(q;q)_j} x^{i-j}
\\
&= \frac{(q;q)_m}{(t;q)_m} \sum_{j=0}^m \frac{(t;q)_{m-j}}{(q;q)_{m-j}} \frac{(t;q)_j}{(q;q)_j} x^{m-j-j}
=  \sum_{j=0}^m \genfrac[]{0pt}{0}{m}{j}_{q,t} x^{m-2j}.
\end{align*}
\end{proof}

\newpage

\subsection{Examples of $E_\ell(X)\mathbf{1}_0$}

This page provides examples of the identities in \eqref{E10}.
Since $E_1(X) = X$ and $E_{-1}(X) = X^{-1}+\frac{(1-t)}{(1-qt)}X$ then
\begin{align*}
E_1(X)\mathbf{1}_0 
&= X\mathbf{1}_0 = \tau_\pi^\vee T^{-1}_1\mathbf{1}_0 = t^{-\frac12}\tau_\pi^\vee \mathbf{1}_0
=t^{-\frac12}\tau_\pi^\vee E_0(X)\mathbf{1}_0 = t^{-\frac12} \eta_\pi \mathbf{1}_0,
\\
E_{-1}(X)\mathbf{1}_0 
&= \Big(X^{-1}+\frac{(1-t)}{(1-qt)}X\Big)\mathbf{1}_0
=\Big( T_1\tau_\pi^\vee+\frac{(1-t)}{(1-tq)}\tau_\pi^\vee T^{-1}_1\Big)\mathbf{1}_0
\\
&=\Big(T_1+t^{-\frac12}\frac{(1-t)}{(1-tq)}\Big)\tau_\pi^\vee \mathbf{1}_0
=\Big(T_1+t^{-\frac12}\frac{(1-t)}{(1-tq)}\Big)t^{\frac12}E_1(X) \mathbf{1}_0
\end{align*}
Since $E_3(X) = X^3 + \Big(\frac{(1-t)q}{(1-tq)}+\frac{(1-t)q^2}{(1-tq^2)}\frac{(1-t)}{(1-tq)}\Big)X
+\frac{(1-t)q^2}{(1-tq^2)}X^{-1}$ then
\begin{align*}
E_3(X)\mathbf{1}_0
&= \Big( X^3 + \Big(\frac{(1-t)q}{(1-tq)}+\frac{(1-t)q^2}{(1-tq^2)}\frac{(1-t)}{(1-tq)}\Big)X
+\frac{(1-t)q^2}{(1-tq^2)}X^{-1}\Big)\mathbf{1}_0
\\
&= \Big( \tau_\pi^\vee T^{-1}_1\tau_\pi^\vee T^{-1}_1\tau_\pi^\vee T^{-1}_1 + t^{-\frac12}\frac{t^{-\frac12}(1-t)tq}{(1-tq)}\tau_\pi^\vee T^{-1}_1
\\
&\qquad
+ \frac{t^{-\frac12} (1-t)tq^2}{(1-tq^2)} \frac{t^{-\frac12} (1-t)}{(1-tq)} \tau_\pi^\vee T^{-1}_1
+t^{-\frac12}\frac{t^{-\frac12}(1-t)tq^2}{(1-tq^2)} T_1\tau_\pi^\vee \Big)\mathbf{1}_0
\\
&= \Big( \tau_\pi^\vee T^{-1}_1\tau_\pi^\vee T^{-1}_1\tau_\pi^\vee T^{-1}_1 + t^{-\frac12}\frac{t^{-\frac12}(1-t)tq}{(1-tq)}\tau_\pi^\vee T^{-1}_1
\\
&\qquad
+ \frac{t^{-\frac12} (1-t)tq^2}{(1-tq^2)} \frac{t^{-\frac12} (1-t)}{(1-tq)} \tau_\pi^\vee T^{-1}_1
+t^{-\frac12}\frac{t^{-\frac12}(1-t)tq^2}{(1-tq^2)} (T^{-1}_1+(t^{\frac12}-t^{-\frac12}))\tau_\pi^\vee \Big)\mathbf{1}_0
\\
&= \Big( \tau_\pi^\vee T^{-1}_1\tau_\pi^\vee T^{-1}_1\tau_\pi^\vee t^{-\frac12}
+ t^{-\frac12}\frac{t^{-\frac12}(1-t)tq}{(1-tq)}\tau_\pi^\vee T^{-1}_1\tau_\pi^\vee \tau_\pi^\vee
\\
&\qquad
+ t^{-\frac12} \frac{t^{-\frac12} (1-t)tq^2}{(1-tq^2)} \frac{t^{-\frac12} (1-t)}{(1-tq)} \tau_\pi^\vee
+t^{-\frac12}\frac{t^{-\frac12}(1-t)tq^2}{(1-tq^2)} T^{-1}_1\tau_\pi^\vee
\\
&\qquad
-t^{-\frac12}\frac{t^{-\frac12}(1-t)tq^2}{(1-tq^2)}\frac{t^{-\frac12}(1-t)}{(1-tq)}(1-tq) \tau_\pi^\vee \Big)\mathbf{1}_0
\\
&= \Big( \tau_\pi^\vee T^{-1}_1\tau_\pi^\vee T^{-1}_1\tau_\pi^\vee t^{-\frac12}
+ t^{-\frac12}\frac{t^{-\frac12}(1-t)tq}{(1-tq)}\tau_\pi^\vee T^{-1}_1\tau_\pi^\vee \tau_\pi^\vee
\\
&\qquad
+t^{-\frac12}\frac{t^{-\frac12}(1-t)tq^2}{(1-tq^2)} T^{-1}_1\tau_\pi^\vee
\\
&\qquad
+t^{-\frac12}\frac{t^{-\frac12}(1-t)tq^2}{(1-tq^2)}\frac{t^{-\frac12}(1-t)}{(1-tq)}tq \tau_\pi^\vee \Big)\mathbf{1}_0
\\
&= \Big( t^{-\frac12} \tau_\pi^\vee T^{-1}_1\tau_\pi^\vee T^{-1}_1\tau_\pi^\vee 
+ t^{-\frac12}\frac{t^{-\frac12}(1-t)tq}{(1-tq)}\tau_\pi^\vee T^{-1}_1\tau_\pi^\vee \tau_\pi^\vee
\\
&\qquad
+t^{-\frac12}\frac{t^{-\frac12}(1-t)tq^2}{(1-tq^2)} \tau_\pi^\vee \tau_\pi^\vee T^{-1}_1\tau_\pi^\vee
+t^{-\frac12}\frac{t^{-\frac12}(1-t)tq^2}{(1-tq^2)}\frac{t^{-\frac12}(1-t)tq}{(1-tq)} \tau_\pi^\vee \tau_\pi^\vee \tau_\pi^\vee \Big)\mathbf{1}_0
\\
&= t^{-\frac12}\tau_\pi^\vee \Big(T^{-1}_1 + \frac{t^{-\frac12}(1-t)tq^2}{(1-tq^2)}\Big)
\tau_\pi^\vee \Big(T^{-1}_1 + \frac{t^{-\frac12}(1-t)tq}{(1-tq)}\Big) \tau_\pi^\vee \mathbf{1}_0.
\end{align*}

\newpage

\subsection{Examples of the $E_\ell(X)\mathbf{1}_0$ expansion} 

Let
$$c(Y) = \frac{(1-tY^2)}{(1-Y^2)}
\qquad\hbox{and}\qquad
F_\ell(Y) = \frac{(1-t)(1-tY^2q^\ell)}{(1-tq^\ell)(1-Y^2)}.
$$
Then
\begin{align*}
E_1(X)\mathbf{1}_0 &= t^{-\frac12} \eta_\pi \mathbf{1}_0 
= t^{-\frac12} \eta_\pi \eta_{s_1}\mathbf{1}_0 = \eta D^{(0)}_0(Y)\mathbf{1}_0, 
\end{align*}
\begin{align*}
E_2(X)\mathbf{1}_0 
&= t^{-\frac22}(\eta c(Y)+\eta_\pi F_1(Y))\eta \mathbf{1}_0 \\
&= t^{-\frac22}\eta^2 c(q^{-\frac12}Y)\mathbf{1}_0+t^{-\frac22}F_1(q^{-\frac12}Y^{-1})\mathbf{1}_0
\\
&= \eta^2 t^{-\frac22}\frac{(1-tY^2 q^{-1})}{(1-Y^2q^{-1})}\mathbf{1}_0
+t^{-\frac22}\frac{(1-t)(1-tq^{-1}Y^{-2}q)}{(1-tq)(1-q^{-1}Y^{-2})}\mathbf{1}_0
\\
&= \eta^2 t^{-\frac22}\cdot \frac{(1-tY^2 q^{-1})}{(1-Y^2q^{-1})}\mathbf{1}_0
+t^{-\frac22}\cdot qt \frac{(1-t)(1-t^{-1}Y^2)}{(1-tq)(1-Y^2q)}\mathbf{1}_0
\\
&=\eta^2 D_0^{(1)}(Y)\mathbf{1}_0+D_1^{(1)}(Y)\mathbf{1}_0
\end{align*}
and
\begin{align*}
E_3(X)\mathbf{1}_0
&=t^{-\frac12}(\eta c(Y)+\eta_\pi F_2(Y))(\eta^2 D_0^{(1)}(Y)\mathbf{1}_0+D_1^{(1)}\mathbf{1}_0) 
\\
&=\eta^3 t^{-\frac12}c(q^{-2}Y)D_0^{(1)}(Y)\mathbf{1}_0+\eta t^{-\frac12}c(Y) D_1^{(1)}(Y)\mathbf{1}_0
\\
&\qquad + \eta^{-1} t^{-\frac12}F_2(q^{-2}Y^{-1})D_0^{(1)}(Y^{-1})\mathbf{1}_0 
+ \eta t^{-\frac12}F_2(Y^{-1})D_1^{(1)}(Y^{-1})\mathbf{1}_0
\\
&= \eta^3 t^{-\frac12}c(q^{-2}Y)D_0^{(1)}(Y) \mathbf{1}_0 
\\
&\qquad + \eta t^{-\frac12}(c(Y)D_1^{(1)}(Y)+F_2(Y^{-1})D_1^{(1)}(Y^{-1}))\mathbf{1}_0
\\
&\qquad + \eta^{-1} t^{-\frac12}F_2(q^{-2}Y^{-1})D_0^{(1)}(Y^{-1})\mathbf{1}_0
\\
&= \eta^3 t^{-\frac32} \frac{(1-tY^2 q^{-2})(1-tY^2 q^{-1})}{(1-Y^2 q^{-2})(1-Y^2 q^{-1})} \mathbf{1}_0
\\
&\qquad + \eta t^{-\frac12} \frac{(1-t)(1-q^2)}{(1-tq^2)(1-q)}\cdot q\frac{(1-tY^2)(1-tqY^{-2})}{(1-Y^2q)(1-Y^{-2}q)} \mathbf{1}_0
\\
&\qquad + \eta^{-1} t^{-\frac32} \frac{(1-t)}{(1-tq^2)}\cdot 
\frac{(1-tY^{-2})(1-tY^{-2}q^{-1})}{(1-Y^{-2}q^{-2})(1-Y^{-2}q^{-1})}\mathbf{1}_0
\\
&= \eta^3 D_0^{(2)}(Y) \mathbf{1}_0 +\eta D_1^{(2)}(Y)\mathbf{1}_0 + \eta^{-1}D_2^{(2)}(Y)\mathbf{1}_0.
\end{align*}

\newpage


\begin{thebibliography}{20}


\bibitem[Cho19]{Cho19} S.\ Cho, \emph{A proof of linearization formula for $q$-ultraspherical polynomials},
preprint, 2019.


\bibitem[CR22]{CR22} L.\ Colmenarejo and A.\ Ram,
\emph{$c$-functions and Macdonald polynomials}, arXiv:2212.03312.




\bibitem[HR22]{HR22} T.\ Halverson and A.\ Ram,
\emph{Monk rules for type $GL_n$ Macdonald polynomials}, arXiv:2212.04032.

\bibitem[Is05]{Is05}
Mourad E. H.\ Ismail
\textsl{Classical and quantum orthogonal polynomials in one variable},
Encyclopedia Math.\ Appl., \textbf{98}
Cambridge University Press, Cambridge, 2005. xviii+706 pp.\ 
ISBN:978-0-521-78201-2
ISBN:0-521-78201-5, MR2191786.


%
%
%
%


\bibitem[Mac]{Mac} I.G.\ Macdonald, 
\textsl{Symmetric functions and Hall polynomials}, 
Second edition, Oxford Mathematical Monographs, Oxford University Press, New York, 1995. 
ISBN: 0-19-853489-2, MR1354144.





\bibitem[Mac03]{Mac03} I.G.\ Macdonald,  {\sl Affine Hecke Algebras and Orthogonal Polynomials}, 
Cambridge Tracts in Mathematics, vol. \textbf{157}, Cambridge University Press, Cambridge, 2003. 
MR1976581.

\bibitem[MW23]{MW23} K.\ Matveev and Y.\ Wei, \emph{Factorization of certain Macdonald Littlewood-Richardson
coefficients}, arXiv:2301.06500.




%

\bibitem[SF14]{SF14} L.\ Susskind and A.\ Friedman,
\textsl{Quantum Mechanics, The theoretical minimum}, Basic Books, New York, 2014. xx+364 pp.
ISBN:978-0-465-03667-7 ISBN:978-0-465-08061-8 MR3156348.

\bibitem[Yi10]{Yi10} M.\ Yip, \emph{A Littlewood-Richardson rule for Macdonald polynomials},
Math.\ Z.\ \textbf{272} (2012) 1259-1290,
MR2995168, arXiv:1010.0722.



\end{thebibliography}
\end{document}